\newcommand{\Fg}{\mathfrak{g}}
\newcommand{\Fh}{\mathfrak{h}}
\newcommand{\BC}{\mathbb{C}}
\newcommand{\BR}{\mathbb{R}}
\newcommand{\BQ}{\mathbb{Q}}
\newcommand{\BZ}{\mathbb{Z}}
\newcommand{\BB}{\mathbb{B}}
\newcommand{\BP}{\mathbb{P}}
\newcommand{\Hom}{\mathop{\rm Hom}\nolimits}
\newcommand{\GL}{\mathop{\rm GL}\nolimits}
\newcommand{\wt}{\mathop{\rm wt}\nolimits}
\newcommand{\cl}{\mathop{\rm cl}\nolimits}
\newcommand{\dist}{\mathop{\rm dist}\nolimits}
\newcommand{\INT}{\mathop{\rm int}\nolimits}
\newcommand{\rr}{\Delta_{\mathrm{re}}}
\newcommand{\prr}{\Delta_{\mathrm{re}}^{+}}
\newcommand{\pair}[2]{\langle #1,\,#2 \rangle}
\newcommand{\len}[2]{\ell(#2,\,#1)}
\newcommand{\mcr}[1]{\lfloor #1 \rfloor}
\newcommand{\ve}{\varepsilon}
\newcommand{\vp}{\varphi}
\newcommand{\vpi}{\varpi}
\newcommand{\ha}[1]{\widehat{#1}}
\newcommand{\ti}[1]{\widetilde{#1}}
\newcommand{\ud}[1]{\underline{#1}}
\newcommand{\bzero}{{\bf 0}}
\newcommand{\bd}{{\bf d}}
\renewcommand\section{\@startsection{section}{1}{0pt}
{-3.5ex plus -1ex minus -.2ex}{1.0ex plus .2ex}{\large\bf}}
\renewcommand\subsection{\@startsection{subsection}{1}{0pt}
{2.5ex plus 1ex minus .2ex}{-1em}{\bf}}
\theoremstyle{plain}
\newtheorem{thm}{Theorem}[subsection]
\newtheorem{lem}[thm]{Lemma}
\newtheorem{prop}[thm]{Proposition}
\newtheorem*{claim*}{Claim}
\theoremstyle{definition}
\newtheorem{dfn}[thm]{Definition}
\theoremstyle{remark}
\newtheorem{rem}[thm]{Remark}
\newenvironment{enu}{%
 \begin{enumerate}%
 \renewcommand{\labelenumi}{\rm (\theenumi)}%
}{\end{enumerate}}
\begin{document}

\setlength{\baselineskip}{17.9pt}

\title{\Large\bf 
Quantum Lakshmibai-Seshadri paths \\[3mm] and root operators}
\author{
 Cristian Lenart \\ 
 \small Department of Mathematics and Statistics, 
 State University of New York at Albany, \\ 
 \small Albany, NY 12222, U.\,S.\,A. \ 
 (e-mail: {\tt clenart@albany.edu}) \\[5mm]
 Satoshi Naito \\ 
 \small Department of Mathematics, Tokyo Institute of Technology, \\
 \small 2-12-1 Oh-okayama, Meguro-ku, Tokyo 152-8551, Japan \ 
 (e-mail: {\tt naito@math.titech.ac.jp}) \\[5mm]
 Daisuke Sagaki \\ 
 \small Institute of Mathematics, University of Tsukuba, \\
 \small Tsukuba, Ibaraki 305-8571, Japan \ 
 (e-mail: {\tt sagaki@math.tsukuba.ac.jp}) \\[5mm]
 Anne Schilling \\ 
 \small Department of Mathematics, University of California, \\
 \small One Shields Avenue, Davis, CA 95616-8633, U.\,S.\,A. \ 
 (e-mail: {\tt anne@math.ucdavis.edu}) \\[5mm]
 Mark Shimozono \\ 
 \small Department of Mathematics, MC 0151, 460 McBryde Hall, 
        Virginia Tech, \\
 \small 225 Stanger St., Blacksburg, VA 24061, 
 U.\,S.\,A. \ 
 (e-mail: {\tt mshimo@vt.edu})
}
\date{}
\maketitle

%
\begin{abstract} 
We give an explicit description of the image of a quantum LS path, 
regarded as a rational path, under the action of root operators, and 
show that the set of quantum LS paths is stable under the action 
of the root operators. As a by-product, we obtain a new proof 
of the fact that a projected level-zero LS path is 
just a quantum LS path.
\end{abstract}
%
%
\section{Introduction.}
\label{sec:intro}
In our previous papers \cite{NS-IMRN}, \cite{NS-Adv}, \cite{NS-Tensor}, 
we gave a combinatorial realization of the crystal bases of 
level-zero fundamental representations $W(\vpi_{i})$, 
$i \in I_{0}$, and their tensor products 
$\bigotimes_{i \in I_{0}} W(\vpi_{i})^{\otimes m_{i}}$, 
$m_{i} \in \BZ_{\geq 0}$, 
over quantum affine algebras $U_{q}'(\Fg)$, 
by using projected level-zero Lakshmibai-Seshadri (LS for short) paths.
Here, for a level-zero dominant integral weight 
$\lambda = \sum_{i \in I_{0}} m_{i} \vpi_{i}$, 
with $\vpi_{i}$ the $i$-th level-zero fundamental weight, 
the set of projected level-zero LS paths of shape $\lambda$, 
which is a ``simple'' crystal denoted by 
$\BB(\lambda)_{\cl}$, is obtained from 
the set $\BB(\lambda)$ of LS paths of shape $\lambda$ 
(in the sense of \cite{Lit-A}) 
by factoring out the null root $\delta$ 
of an affine Lie algebra $\mathfrak{g}$.
However, from the nature of the above definition of 
projected level-zero LS paths, our description of 
these objects in \cite{NS-IMRN}, \cite{NS-Adv}, \cite{NS-Tensor}
was not as explicit as the one (given in \cite{Lit-I}) 
of usual LS paths, the shape of which 
is a dominant integral weight.

Recently, in \cite{LNSSS1}, \cite{LNSSS2}, 
we proved that a projected level-zero LS path is 
identical to a certain ``rational path'', 
which we call a quantum LS path. 
A quantum LS path is described in terms of 
the (parabolic) quantum Bruhat graph (QBG for short), 
which was introduced by \cite{BFP} 
(and by \cite{LS} in the parabolic case)
in the study of the quantum cohomology ring of 
the (partial) flag variety; see \S\ref{subsec:def-QBG} 
for the definition of the (parabolic) QBG.
It is noteworthy that the description of 
a quantum LS path as a rational path is 
very similar to the one of a usual LS path given in \cite{Lit-I}, 
in which we replace the Hasse diagram of the (parabolic) 
Bruhat graph by the (parabolic) QBG. 
Also, remark that the vertices of the (parabolic) QBG are 
the minimal-length representatives 
for the cosets of a parabolic subgroup $W_{0,\,J}$ 
of the finite Weyl group $W_{0}$, though we consider 
finite-dimensional representations $W(\vpi_{i})$, 
$i \in I_{0}$, of quantum affine algebras $U_{q}'(\Fg)$.

The purpose of this paper is to give 
an explicit description, in terms of rational paths, 
of the image of a quantum LS path ($=$ projected level-zero LS path) 
under root operators in a way similar to the one given 
in \cite{Lit-I}; see Theorem~\ref{thm:main} for details.
This explicit description, together with 
the Diamond Lemmas \cite[Lemma 5.14]{LNSSS1}, 
for the parabolic QBG, provides us with a proof of 
the fact that the set of quantum LS paths 
(the shape of which is a level-zero dominant 
integral weight $\lambda$) is stable under 
the action of the root operators.

As a by-product of the stability property above, 
we obtain another (but somewhat roundabout) proof of 
the fact that a projected level-zero LS path is 
just a quantum LS path; see \cite{LNSSS1}, \cite{LNSSS2} 
for a more direct proof.
This new proof is accomplished by making use of 
a characterization (Theorem~\ref{thm:charls}) of the set 
$\BB(\lambda)_{\cl}$ of 
projected level-zero LS paths of shape $\lambda$ 
in terms of root operators, which is based upon 
the connectedness of the (crystal graph for the) 
tensor product crystal 
$\bigotimes_{i \in I_{0}} 
 \BB(\vpi_{i})_{\cl}^{\otimes m_{i}} 
 \simeq \BB(\lambda)_{\cl}$; 
recall from \cite{NS-IMRN}, \cite{NS-Adv}, \cite{NS-Tensor} that
for a level-zero dominant integral weight 
$\lambda = \sum_{i \in I_{0}} m_{i} \vpi_{i}$, 
the crystal $\BB(\lambda)_{\cl}$ decomposes into 
the tensor product $\bigotimes_{i \in I_{0}} 
\BB(\vpi_{i})^{\otimes m_{i}}_{\cl}$ of crystals, 
and that $\BB(\vpi_{i})_{\cl}$ for each $i \in I_{0}$ is 
isomorphic to the crystal basis of the level-zero fundamental 
representation $W(\vpi_{i})$.

This paper is organized as follows. 
In \S\ref{sec:LS}, we fix our fundamental notation, and 
recall some basic facts about (level-zero) LS path crystals. 
Also, we give a characterization (Theorem~\ref{thm:charls}) of 
projected level-zero LS paths, which is needed to obtain 
our main result (Theorem~\ref{thm:main}). In \S\ref{sec:QLS}, 
we recall the notion of the (parabolic) quantum Bruhat graph, 
and then give the definition of quantum LS paths. 
In \S\ref{sec:main}, we first state our main result. 
Then, after preparing several technical lemmas, 
we finally obtain an explicit description (Proposition~\ref{prop:stable})
of the image of a quantum LS path as a rational path under the 
action of root operators. 
Our main result follows immediately from this description, together 
with the characterization above of projected level-zero LS paths.

\paragraph{Acknowledgments.}
C.L. was partially supported by the NSF grant DMS--1101264. 
S.N. was supported by Grant-in-Aid for Scientific Research (C), No.\,24540010, Japan. 
D.S. was supported by Grant-in-Aid for Young Scientists (B) No.\,23740003, Japan.
A.S. was partially supported by the NSF grants DMS--1001256, OCI--1147247, 
and a grant from the Simons Foundation (\#226108 to Anne Schilling).
M.S. was partially supported by the NSF grant DMS--1200804.

%
\section{Lakshmibai-Seshadri paths.}
\label{sec:LS}

%
\subsection{Basic notation.}
\label{subsec:notation}

Let $\Fg$ be an untwisted affine Lie algebra 
over $\BC$ with Cartan matrix $A=(a_{ij})_{i,\,j \in I}$; 
throughout this paper, the elements of the index set $I$ 
are numbered as in \cite[\S4.8, Table Aff~1]{Kac}. 
Take a distinguished vertex $0 \in I$ as in \cite{Kac}, 
and set $I_{0}:=I \setminus \{0\}$.
Let 
$\Fh=\bigl(\bigoplus_{j \in I} \BC \alpha_{j}^{\vee}\bigr) \oplus \BC d$
denote the Cartan subalgebra of $\Fg$, where 
$\Pi^{\vee}:=\bigl\{\alpha_{j}^{\vee}\bigr\}_{j \in I} \subset \Fh$ is 
the set of simple coroots, and 
$d \in \Fh$ is the scaling element (or degree operator). 
Also, we denote by 
$\Pi:=\bigl\{\alpha_{j}\bigr\}_{j \in I} \subset 
\Fh^{\ast}:=\Hom_{\BC}(\Fh,\BC)$ 
the set of simple roots, and by 
$\Lambda_{j} \in \Fh^{\ast}$, $j \in I$, 
the fundamental weights; 
note that $\alpha_{j}(d)=\delta_{j,0}$ and 
$\Lambda_{j}(d)=0$ for $j \in I$. 
Let $\delta=\sum_{j \in I} a_{j}\alpha_{j} \in \Fh^{\ast}$ and 
$c=\sum_{j \in I} a^{\vee}_{j} \alpha_{j}^{\vee} \in \Fh$ denote 
the null root and the canonical central element of 
$\Fg$, respectively. 
The Weyl group $W$ of $\Fg$ is defined by 
$W:=\langle r_{j} \mid j \in I\rangle \subset \GL(\Fh^{\ast})$, 
where $r_{j} \in \GL(\Fh^{\ast})$ denotes the simple reflection 
associated to $\alpha_{j}$ for $j \in I$, with
$\ell:W \rightarrow \BZ_{\ge 0}$ the length function on $W$. 
Denote by $\rr$ the set of real roots, i.e., $\rr:=W\Pi$, 
and by $\prr \subset \rr$ the set of positive real roots; 
for $\beta \in \rr$, we denote by $\beta^{\vee}$ 
the dual root of $\beta$, and by $r_{\beta} \in W$ 
the reflection with respect to $\beta$. 
We take a dual weight lattice $P^{\vee}$ 
and a weight lattice $P$ as follows:
%
%
\begin{equation} \label{eq:lattices}
P^{\vee}=
\left(\bigoplus_{j \in I} \BZ \alpha_{j}^{\vee}\right) \oplus \BZ d \, 
\subset \Fh
\quad \text{and} \quad 
P= 
\left(\bigoplus_{j \in I} \BZ \Lambda_{j}\right) \oplus 
   \BZ \delta \subset \Fh^{\ast}.
\end{equation}
It is clear that $P$ contains 
$Q:=\bigoplus_{j \in I} \BZ \alpha_{j}$, and that 
$P \cong \Hom_{\BZ}(P^{\vee},\BZ)$. 

Let $W_{0}$ be the subgroup of $W$ generated by 
$r_{j}$, $j \in I_{0}$, and set $\Delta_{0}:=
\rr \cap \bigoplus_{j \in I_{0}}\BZ\alpha_{j}$, 
$\Delta_{0}^{+}:=
\rr \cap \bigoplus_{j \in I_{0}}\BZ_{\ge 0}\alpha_{j}$, and
$\Delta_{0}^{-}:=-\Delta_{0}^{+}$. 
Note that $W_{0}$ (resp., $\Delta_{0}$, $\Delta_{0}^{+}$, $\Delta_{0}^{-}$) 
can be thought of as the (finite) Weyl group 
(resp., the set of roots, the set of positive roots, the set of negative roots) 
of the finite-dimensional simple Lie algebra corresponding to $I_{0}$. 
Denote by $\theta \in \Delta_{0}^{+}$ the highest root for the 
(finite) root system $\Delta_{0}$; note that 
$\alpha_{0}=-\theta+\delta$ and $\alpha_{0}^{\vee}=-\theta^{\vee}+c$. 
%
%
%
\begin{dfn} \label{dfn:lv0}
\mbox{}
\begin{enu}
\item
An integral weight 
$\lambda \in P$ is said to 
be of level zero if $\pair{\lambda}{c}=0$. 

\item
An integral weight 
$\lambda \in P$ is said to 
be level-zero dominant if $\pair{\lambda}{c}=0$, and 
$\pair{\lambda}{\alpha_{j}^{\vee}} \ge 0$ for all $j \in I_{0}=I \setminus \{0\}$. 
\end{enu}
\end{dfn}
%
%
\begin{rem} \label{rem:theta}
If $\lambda \in P$ is of level zero, 
then $\pair{\lambda}{\alpha_{0}^{\vee}}=-\pair{\lambda}{\theta^{\vee}}$.
\end{rem}

For each $i \in I_{0}$, 
we define a level-zero fundamental weight 
$\vpi_{i} \in P$ by
\begin{equation}
\vpi_{i}:=\Lambda_{i}-a_{i}^{\vee}\Lambda_{0}.
\end{equation}
The $\vpi_{i}$ for $i \in I_{0}$ is actually
a level-zero dominant integral weight; 
indeed, $\pair{\vpi_{i}}{c}=0$ and 
$\pair{\vpi_{i}}{\alpha_{j}^{\vee}}=\delta_{i,j}$ for $j \in I_{0}$.

Let 
$\cl:\Fh^{\ast} \twoheadrightarrow \Fh^{\ast}/\BC\delta$ 
be the canonical projection from $\Fh^{\ast}$ onto 
$\Fh^{\ast}/\BC\delta$, and 
define $P_{\cl}$ and $P_{\cl}^{\vee}$ by
%
%
\begin{equation} \label{eq:lat-cl}
P_{\cl} := \cl(P) = 
 \bigoplus_{j \in I} \BZ \cl(\Lambda_{j})
\quad \text{and} \quad 
P_{\cl}^{\vee} := 
 \bigoplus_{j \in I} \BZ \alpha_{j}^{\vee} 
 \subset P^{\vee}.
\end{equation}
We see that 
$P_{\cl} \cong P/\BZ\delta$, and that 
$P_{\cl}$ can be identified with 
$\Hom_{\BZ}(P_{\cl}^{\vee},\BZ)$ as a $\BZ$-module by
%
%
\begin{equation} \label{eq:pair}
\pair{\cl(\lambda)}{h}=\pair{\lambda}{h} \quad
\text{for $\lambda \in P$ and $h \in P_{\cl}^{\vee}$}.
\end{equation}
Also, there exists 
a natural action of the Weyl group $W$ on 
$\Fh^{\ast}/\BC\delta$ induced by the one on $\Fh^{\ast}$, 
since $W\delta=\delta$; it is obvious that
$w \circ \cl = \cl \circ w$ for all $w \in W$.
%
%
\begin{rem} \label{rem:orbcl}
Let $\lambda \in P$ be a level-zero integral weight. 
It is easy to check that 
$\cl(W\lambda)=W_{0}\cl(\lambda)$ 
(see the proof of \cite[Lemma~2.3.3]{NS-LMS}). 
In particular, we have 
$\cl(r_{0}\lambda)=\cl(r_{\theta}\lambda)$ 
since $\alpha_{0}=-\theta+\delta$ and 
$\alpha_{0}^{\vee}=-\theta^{\vee}+c$. 
\end{rem}

For simplicity of notation, 
we often write $\beta$ instead of $\cl(\beta) \in P_{\cl}$ 
for $\beta \in Q=\bigoplus_{j \in I} \BZ \alpha_{j}$; 
note that $\alpha_{0}=-\theta$ in $P_{\cl}$ 
since $\alpha_{0}=-\theta+\delta$ in $P$. 

%
\subsection{Paths and root operators.}
\label{subsec:path}

A path with weight in $P_{\cl}=\cl(P)$ is, by definition, 
a piecewise-linear, continuous map 
$\pi:[0,1] \rightarrow \BR \otimes_{\BZ} P_{\cl}$
such that $\pi(0)=0$ and $\pi(1) \in P_{\cl}$. 
We denote by $\BP_{\cl}$ the set of all paths with weight in $P_{\cl}$, 
and define $\wt:\BP_{\cl} \rightarrow P_{\cl}$ by
%
%
\begin{equation} \label{eq:wt}
\wt(\eta):=\eta(1) \quad 
 \text{\rm for $\eta \in \BP_{\cl}$}. 
\end{equation}
For $\eta \in \BP_{\cl}$ and $j \in I$, we set
%
%
\begin{equation} \label{eq:Hm}
\begin{array}{l}
H^{\eta}_{j}(t):=\pair{\eta(t)}{\alpha_{j}^{\vee}} 
  \quad \text{for \,} t \in [0,1], \\[3mm]
m^{\eta}_{j}
  :=\min\bigl\{H^{\eta}_{j}(t) \mid t \in [0,1]\bigr\}. 
\end{array}
\end{equation}
For each $j \in I$, 
let $\BP_{\cl,\,\INT}^{(j)}$ denote the subset of $\BP_{\cl}$ 
consisting of all paths $\eta$ for which 
all local minima of the function
$H^{\eta}_{j}(t)$ are integers; 
note that if $\eta \in \BP_{\cl,\,\INT}^{(j)}$, 
then $m^{\eta}_{j} \in \BZ_{\le 0}$ and 
$H^{\eta}_{j}(1)-m^{\eta}_{j} \in \BZ_{\ge 0}$. 
We set 
\begin{equation*}
\BP_{\cl,\,\INT}:=\bigcap_{j \in I} \BP_{\cl,\,\INT}^{(j)};
\end{equation*}
see also \cite[\S2.3]{NS-Tensor}.
Here we should warn the reader that 
the set $\BP_{\cl,\,\INT}$ itself is not necessarily 
stable under the action of the root operators $e_{j}$ and $f_{j}$ 
for $j \in I$, defined below.

Now, for $j \in I$ and $\eta \in \BP_{\cl,\,\INT}^{(j)}$, 
we define $e_{j}\eta$ as follows. 
If $m^{\eta}_{j} = 0$, then $e_{j}\eta:=\bzero$, 
where $\bzero$ is an additional element 
not contained in $\BP_{\cl}$. 
If $m^{\eta}_{j} \le -1$, 
then we define $e_{j}\eta \in \BP_{\cl}$ by
%
%
\begin{equation} \label{eq:ro_e}
(e_{j}\eta)(t):=
\begin{cases}
\eta(t) & \text{if \,} 0 \le t \le t_{0}, \\[2mm]
\eta(t_{0})+r_{j}(\eta(t)-\eta(t_{0}))
       & \text{if \,} t_{0} \le t \le t_{1}, \\[2mm]
\eta(t)+\alpha_{j} & \text{if \,} t_{1} \le t \le 1,
\end{cases}
\end{equation}
where we set 
%
%
\begin{equation} \label{eq:t1}
\begin{array}{l}
t_{1}:=\min\bigl\{t \in [0,1] \mid 
       H^{\eta}_{j}(t)=m^{\eta}_{j} \bigr\}, \\[2mm]
t_{0}:=\max\bigl\{t \in [0,t_{1}] \mid
       H^{\eta}_{j}(t) = m^{\eta}_{j}+1 \bigr\};
\end{array}
\end{equation}
note that the function $H^{\eta}_{j}(t)$ is strictly decreasing 
on $[t_{0},t_{1}]$ since $\eta \in \BP_{\cl,\,\INT}^{(j)}$. Because 
\begin{equation*}
H^{e_{j}\eta}_{j}(t)=
\begin{cases}
H^{\eta}_{j}(t) & \text{if \,} 0 \le t \le t_{0}, \\[2mm]
2(m^{\eta}_{j}+1)-H^{\eta}_{j}(t)
       & \text{if \,} t_{0} \le t \le t_{1}, \\[2mm]
H^{\eta}_{j}(t)+2 & \text{if \,} t_{1} \le t \le 1,
\end{cases}
\end{equation*}
it is easily seen that 
$e_{j}\eta \in \BP_{\cl,\,\INT}^{(j)}$, and 
$m^{e_{j}\eta}_{j}=m^{\eta}_{j}+1$. Therefore, if we set
%
%
\begin{equation} \label{eq:ve}
\ve_{j}(\eta):=
 \max\bigl\{n \ge 0 \mid e_{j}^{n}\eta \ne \bzero\bigr\}
\end{equation}
for $j \in I$ and $\eta \in \BP_{\cl,\,\INT}^{(j)}$,
then $\ve_{j}(\eta)=-m^{\eta}_{j}$ 
(see also \cite[Lemma 2.1\,c)]{Lit-A}). By convention, 
we set $e_{j}\bzero:=\bzero$ for all $j \in I$. 

\begin{rem}
Assume that $\eta \in \BP_{\cl,\,\INT}^{(0)}$ satisfies the condition that 
$m^{\eta}_{0} \le -1$ and $\pair{\eta(t)}{c}=0$ for all $t \in [0,1]$. 
Then we have
%
%
\begin{equation} \label{eq:ro_e0}
(e_{0}\eta)(t)=
\begin{cases}
\eta(t) & \text{if \,} 0 \le t \le t_{0}, \\[2mm]
\eta(t_{0})+r_{\theta}(\eta(t)-\eta(t_{0}))
       & \text{if \,} t_{0} \le t \le t_{1}, \\[2mm]
\eta(t)-\theta & \text{if \,} t_{1} \le t \le 1,
\end{cases}
\end{equation}
where $t_{0}$ and $t_{1}$ are defined by \eqref{eq:t1} 
for $j=0$. 
\end{rem}

Similarly, for $j \in I$ and 
$\eta \in \BP_{\cl,\,\INT}^{(j)}$,
we define $f_{j}\eta$ as follows. 
If $H^{\eta}_{j}(1)-m^{\eta}_{j}=0$, 
then $f_{j}\eta:=\bzero$. 
If $H^{\eta}_{j}(1)-m^{\eta}_{j} \ge 1$, 
then we define $f_{j}\eta \in \BP_{\cl}$ by
%
%
\begin{equation} \label{eq:ro_f}
(f_{j}\eta)(t):=
\begin{cases}
\eta(t) & \text{if \,} 0 \le t \le t_{0}, \\[2mm]
\eta(t_{0})+r_{j}(\eta(t)-\eta(t_{0}))
       & \text{if \,} t_{0} \le t \le t_{1}, \\[2mm]
\eta(t)-\alpha_{j} & \text{if \,} t_{1} \le t \le 1,
\end{cases}
\end{equation}
where we set 
%
%
\begin{equation} \label{eq:t2}
\begin{array}{l}
t_{0}:=\max\bigl\{t \in [0,1] \mid 
       H^{\eta}_{j}(t)=m^{\eta}_{j}\bigr\}, \\[2mm]
t_{1}:=\min\bigl\{t \in [t_{0},1] \mid
       H^{\eta}_{j}(t)=m^{\eta}_{j}+1\bigr\}; 
\end{array}
\end{equation}
note that the function $H^{\eta}_{j}(t)$ is strictly increasing 
on $[t_{0},t_{1}]$ since $\eta \in \BP_{\cl,\,\INT}^{(j)}$. 
Because 
\begin{equation*}
H^{f_{j}\eta}_{j}(t)=
\begin{cases}
H^{\eta}_{j}(t) & \text{if \,} 0 \le t \le t_{0}, \\[2mm]
2m^{\eta}_{j}-H^{\eta}_{j}(t)
       & \text{if \,} t_{0} \le t \le t_{1}, \\[2mm]
H^{\eta}_{j}(t)-2 & \text{if \,} t_{1} \le t \le 1,
\end{cases}
\end{equation*}
it is easily seen that 
$f_{j}\eta \in \BP_{\cl,\,\INT}^{(j)}$, and 
$m^{f_{j}\eta}_{j}=m^{\eta}_{j}-1$. Therefore, if we set
%
%
\begin{equation} \label{eq:vp}
\vp_{j}(\eta):=
 \max\bigl\{n \ge 0 \mid f_{j}^{n}\eta \ne \bzero\bigr\}
\end{equation}
for $j \in I$ and $\eta \in \BP_{\cl,\,\INT}^{(j)}$, 
then $\vp_{j}(\eta)=H^{\eta}_{j}(1)-m^{\eta}_{j}$ 
(see also \cite[Lemma 2.1\,c)]{Lit-A}).
By convention, we set $f_{j}\bzero:=\bzero$ 
for all $j \in I$. 

\begin{rem} \label{rem:ro_f0}
Assume that $\eta \in \BP_{\cl,\,\INT}^{(0)}$ satisfies the condition that 
$H^{\eta}_{0}(1)-m^{\eta}_{0} \ge 1$ and $\pair{\eta(t)}{c}=0$ for all $t \in [0,1]$. 
Then we have
%
%
\begin{equation} \label{eq:ro_f0}
(f_{0}\eta)(t)=
\begin{cases}
\eta(t) & \text{if \,} 0 \le t \le t_{0}, \\[2mm]
\eta(t_{0})+r_{\theta}(\eta(t)-\eta(t_{0}))
       & \text{if \,} t_{0} \le t \le t_{1}, \\[2mm]
\eta(t)+\theta & \text{if \,} t_{1} \le t \le 1,
\end{cases}
\end{equation}
where $t_{0}$ and $t_{1}$ are defined by \eqref{eq:t2} for $j=0$.
\end{rem}

We know the following theorem from \cite[\S2]{Lit-A} 
(see also \cite[Theorem 2.4]{NS-Tensor}); 
for the definition of crystals, 
see \cite[\S7.2]{Kas-OnC} or \cite[\S4.5]{HK} for example. 
%
%
\begin{thm} \label{thm:pc01}
\mbox{}
\begin{enu}
\item
Let $j \in I$, and $\eta \in \BP_{\cl,\,\INT}^{(j)}$. 
If $e_{j}\eta \ne \bzero$, then 
$f_{j}e_{j}\eta=\eta$. Also, 
if $f_{j}\eta \ne \bzero$, then 
$e_{j}f_{j}\eta=\eta$. 

\item
Let $\BB$ be a subset of $\BP_{\cl,\,\INT}$ 
such that the set $\BB \cup \{\bzero\}$ is stable 
under the action of the root operators $e_{j}$ and $f_{j}$ 
for all $j \in I$. The set $\BB$, equipped with 
the root operators $e_{j}$, $f_{j}$ for $j \in I$ 
and the maps \eqref{eq:wt}, \eqref{eq:ve}, 
\eqref{eq:vp}, is a crystal with weights in $P_{\cl}$.
\end{enu}
\end{thm}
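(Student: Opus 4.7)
The plan is to establish~(1) by a direct piecewise computation from~\eqref{eq:ro_e}--\eqref{eq:t2}, and then to derive~(2) as a formal verification of the crystal axioms using~(1) together with the arithmetic of $m^{e_{j}\eta}_{j}$ and $H^{e_{j}\eta}_{j}(1)$ (and their analogues for $f_{j}$) already recorded in the paragraphs preceding the theorem. I would treat~(1) first, concentrating on $f_{j}e_{j}\eta=\eta$ under the hypothesis $e_{j}\eta\ne\bzero$; the identity $e_{j}f_{j}\eta=\eta$ is entirely symmetric.

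For $f_{j}e_{j}\eta=\eta$, the essential step is to identify the parameters $t_{0}^{\prime},t_{1}^{\prime}$ that appear in the definition of $f_{j}(e_{j}\eta)$ via~\eqref{eq:t2} with the original $t_{0},t_{1}$ of~\eqref{eq:t1}. Once this coincidence is in hand, the three pieces of $(f_{j}e_{j}\eta)(t)$ collapse at once: on $[0,t_{0}]$ one has $(f_{j}e_{j}\eta)(t)=(e_{j}\eta)(t)=\eta(t)$; on $[t_{0},t_{1}]$ the reflection $r_{j}$ is applied twice and cancels via $r_{j}^{2}=1$; and on $[t_{1},1]$ the translations by $+\alpha_{j}$ (from $e_{j}$) and $-\alpha_{j}$ (from $f_{j}$) cancel.

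The main obstacle will be justifying $t_{0}^{\prime}=t_{0}$ and $t_{1}^{\prime}=t_{1}$. From the formula for $H^{e_{j}\eta}_{j}$ displayed earlier we already have $m^{e_{j}\eta}_{j}=m^{\eta}_{j}+1$, the function strictly increases on $[t_{0},t_{1}]$ from $m^{\eta}_{j}+1$ to $m^{\eta}_{j}+2$, and $H^{e_{j}\eta}_{j}\ge m^{\eta}_{j}+2$ on $[t_{1},1]$; so $t_{1}^{\prime}=t_{1}$ is immediate and the whole problem reduces to showing $H^{\eta}_{j}(t)\ge m^{\eta}_{j}+1$ on $[0,t_{0}]$, so that $t_{0}$ is indeed the last time $H^{e_{j}\eta}_{j}$ attains its minimum. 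For this I would argue by examining where the minimum of the restriction $H^{\eta}_{j}|_{[0,t_{0}]}$ is achieved: at an interior point, the integer local-minimum hypothesis $\eta\in\BP_{\cl,\,\INT}^{(j)}$ forces the value to be an integer strictly greater than $m^{\eta}_{j}$ (otherwise $t_{1}$ would fail to be the \emph{first} time $H^{\eta}_{j}=m^{\eta}_{j}$), hence $\ge m^{\eta}_{j}+1$; at $t=0$ the value is $H^{\eta}_{j}(0)=0\ge m^{\eta}_{j}+1$ because the assumption $e_{j}\eta\ne\bzero$ forces $m^{\eta}_{j}\le -1$; at $t=t_{0}$ the value equals $m^{\eta}_{j}+1$ by definition.

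For~(2), all crystal axioms then follow routinely. The weight shifts $\wt(e_{j}b)=\wt(b)+\alpha_{j}$ and $\wt(f_{j}b)=\wt(b)-\alpha_{j}$ come from evaluating the third pieces of~\eqref{eq:ro_e} and~\eqref{eq:ro_f} at $t=1$. The identities $\ve_{j}(e_{j}b)=\ve_{j}(b)-1$, $\vp_{j}(e_{j}b)=\vp_{j}(b)+1$, and their analogues for $f_{j}$, are read off from $m^{e_{j}\eta}_{j}=m^{\eta}_{j}+1$ and $H^{e_{j}\eta}_{j}(1)=H^{\eta}_{j}(1)+2$. The compatibility $\vp_{j}(b)-\ve_{j}(b)=\pair{\wt(b)}{\alpha_{j}^{\vee}}$ is immediate from $H^{b}_{j}(1)=\pair{\wt(b)}{\alpha_{j}^{\vee}}$. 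The bijection $e_{j}b=b^{\prime}\Leftrightarrow f_{j}b^{\prime}=b$ between the relevant subsets of $\BB$ is exactly part~(1), and the finiteness of $\ve_{j}$ and $\vp_{j}$ takes care of the axiom concerning the value $-\infty$.
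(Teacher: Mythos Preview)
Your argument is correct. The identification $t_{0}'=t_{0}$, $t_{1}'=t_{1}$ is the crux, and your case analysis for the inequality $H^{\eta}_{j}\ge m^{\eta}_{j}+1$ on $[0,t_{0}]$ is sound: an interior minimizer of the restriction is a genuine local minimum of $H^{\eta}_{j}$ on $[0,1]$, hence an integer by the $\BP_{\cl,\,\INT}^{(j)}$ hypothesis, and strictly greater than $m^{\eta}_{j}$ by the minimality of $t_{1}$; the endpoints are handled directly. The verification of the crystal axioms in part~(2) is routine once part~(1) is in place, exactly as you outline.

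As for comparison with the paper: there is nothing to compare. The paper does not prove this theorem; it is simply quoted from \cite[\S2]{Lit-A} (see also \cite[Theorem~2.4]{NS-Tensor}). Your write-up is essentially the standard Littelmann computation that underlies those references, so you have supplied the details the paper chose to omit.
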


\begin{rem}
In \S\ref{subsec:LS}, we wiil give a typical example of 
a subset $\BB$ of $\BP_{\cl,\,\INT}$ such that $\BB \cup \{\bzero\}$ 
is stable under the action of root operators.
\end{rem}

For each path $\eta \in \BP_{\cl}$ and $N \in \BZ_{\ge 1}$, 
we define a path $N\eta \in \BP_{\cl}$ by: 
$(N\eta)(t)=N\eta(t)$ for $t \in [0,1]$; 
by convention, we set $N\bzero:=\bzero$ for all $N \in \BZ_{\ge 1}$. 
It is easily verified that
if $\eta \in \BP_{\cl,\,\INT}^{(j)}$ for some $j \in I$, then 
$N\eta \in \BP_{\cl,\,\INT}^{(j)}$ for all $N \in \BZ_{\ge 1}$. 
%
%
\begin{lem}[%
  {see \cite[Lemma 2.4]{Lit-A} and also \cite[Lemma 2.5]{NS-Tensor}}%
 ] \label{lem:N}
Let $j \in I$. For every $\eta \in \BP_{\cl,\,\INT}^{(j)}$ and 
$N \in \BZ_{\ge 1}$, we have
\begin{align*}
& \ve_{j}(N\eta)=N\ve_{j}(\eta) \quad \text{\rm and} \quad
\vp_{j}(N\eta)=N\vp_{j}(\eta), \\
& N(e_{j}\eta)=e_{j}^{N}(N\eta) \quad \text{\rm and} \quad 
N(f_{j}\eta)=f_{j}^{N}(N\eta).
\end{align*}
\end{lem}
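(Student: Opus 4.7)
The plan is to treat the $\ve_j,\vp_j$ identities as an immediate corollary of a single observation, and then prove the path identities $N(e_j\eta)=e_j^N(N\eta)$ and $N(f_j\eta)=f_j^N(N\eta)$ by an explicit induction that tracks intermediate level crossings; the $f_j$ case is entirely parallel to the $e_j$ case.

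The opening observation is that $H^{N\eta}_j(t)=\pair{N\eta(t)}{\alpha_j^{\vee}}=NH^{\eta}_j(t)$, so $m^{N\eta}_j=Nm^{\eta}_j$, which immediately gives $\ve_j(N\eta)=-m^{N\eta}_j=N\ve_j(\eta)$ and $\vp_j(N\eta)=H^{N\eta}_j(1)-m^{N\eta}_j=N\vp_j(\eta)$. For the path identity $N(e_j\eta)=e_j^N(N\eta)$, the degenerate case $e_j\eta=\bzero$ is handled at once, since then $\ve_j(\eta)=0$ forces $\ve_j(N\eta)=0$, so both sides are $\bzero$. Assuming now $m^{\eta}_j\le -1$, let $t_0<t_1$ be the points from \eqref{eq:t1} for $\eta$. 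Because $\eta\in\BP_{\cl,\,\INT}^{(j)}$, the function $H^{\eta}_j$ is strictly decreasing on $[t_0,t_1]$ from $m^{\eta}_j+1$ to $m^{\eta}_j$; together with the (easily verified) bound $H^{\eta}_j\ge m^{\eta}_j+1$ on $[0,t_0]$, the intermediate value theorem produces unique points $t_0=\tau_0<\tau_1<\cdots<\tau_N=t_1$ with $H^{N\eta}_j(\tau_k)=Nm^{\eta}_j+N-k$.

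The core of the argument is the inductive claim that for $0\le k\le N$,
\[
e_j^k(N\eta)(t)=
\begin{cases}
N\eta(t) & 0\le t\le\tau_{N-k}, \\
N\eta(\tau_{N-k})+r_j\bigl(N\eta(t)-N\eta(\tau_{N-k})\bigr) & \tau_{N-k}\le t\le t_1, \\
N\eta(t)+k\alpha_j & t_1\le t\le 1.
\end{cases}
\]
To carry out the induction step, I would first read off from this formula that $H^{e_j^k(N\eta)}_j$ equals $H^{N\eta}_j$ on $[0,\tau_{N-k}]$, the reflection $2(Nm^{\eta}_j+k)-H^{N\eta}_j$ on $[\tau_{N-k},t_1]$, and $H^{N\eta}_j+2k$ on $[t_1,1]$; strict monotonicity of $H^{N\eta}_j$ on $[t_0,t_1]$ and the bound on $[0,t_0]$ then identify $t_1^{(k+1)}=\tau_{N-k}$ and $t_0^{(k+1)}=\tau_{N-k-1}$ in the definition \eqref{eq:t1} applied to $e_j^k(N\eta)$. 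Substituting into \eqref{eq:ro_e} yields the claim for $k+1$. Specializing to $k=N$ recovers the three branches of $N(e_j\eta)$.

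The step I expect to be the main technical obstacle is verifying the compatibility of the middle-interval formula across the induction, namely that the new single reflection about level $Nm^{\eta}_j+k+1$ on $[\tau_{N-k-1},t_1]$ agrees with the previous reflection about level $Nm^{\eta}_j+k$ on $[\tau_{N-k},t_1]$ followed by an $\alpha_j$-shift on $[\tau_{N-k},1]$. Setting $A=N\eta(\tau_{N-k-1})$ and $B=N\eta(\tau_{N-k})$, this reduces (using linearity of $r_j$) to the identity $(A-B)-r_j(A-B)=\alpha_j$, which holds because $\pair{A-B}{\alpha_j^{\vee}}=H^{N\eta}_j(\tau_{N-k-1})-H^{N\eta}_j(\tau_{N-k})=1$ by construction of the $\tau_k$. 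Once this identity is in hand, everything else is bookkeeping, and the argument for $f_j$ is obtained by reversing the role of the crossing points and using \eqref{eq:t2} in place of \eqref{eq:t1}.
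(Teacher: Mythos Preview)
The paper does not actually prove this lemma; it simply records it with a citation to \cite[Lemma~2.4]{Lit-A} and \cite[Lemma~2.5]{NS-Tensor}. So there is no in-paper argument to compare against, and your proposal supplies a genuine, self-contained proof where the paper gives none.

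Your argument is correct. The scaling identity $H^{N\eta}_{j}=NH^{\eta}_{j}$ handles the $\ve_{j}$ and $\vp_{j}$ statements immediately, and your inductive formula for $e_{j}^{k}(N\eta)$ is exactly what one obtains by repeatedly applying \eqref{eq:ro_e} once the intermediate crossing points $\tau_{k}$ have been located. The two places that need care are both addressed: the bound $H^{\eta}_{j}\ge m^{\eta}_{j}+1$ on $[0,t_{0}]$ follows from the integrality of local minima together with $H^{\eta}_{j}>m^{\eta}_{j}$ on $[0,t_{1})$, and the ``gluing'' identity $(A-B)-r_{j}(A-B)=\alpha_{j}$ is precisely the statement that consecutive $\tau_{k}$ differ by one unit of $H^{N\eta}_{j}$. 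Your identification $t_{1}^{(k+1)}=\tau_{N-k}$ and $t_{0}^{(k+1)}=\tau_{N-k-1}$ is justified because on $[0,t_{0}]$ one has $H^{N\eta}_{j}\ge N(m^{\eta}_{j}+1)>Nm^{\eta}_{j}+k$ for $k<N$, on $[t_{0},\tau_{N-k}]$ the function is strictly decreasing, and on $[\tau_{N-k},1]$ the reflected/shifted function is bounded below by $Nm^{\eta}_{j}+k$ with equality only at $\tau_{N-k}$. Specializing $k=N$ reproduces $N(e_{j}\eta)$ on the nose, and the $f_{j}$ case is symmetric. This is essentially the argument underlying the cited references, written out explicitly.
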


For $j \in I$ and $\eta \in \BP_{\cl,\,\INT}^{(j)}$, 
we define $e_{j}^{\max}\eta:=e_{j}^{\ve_{j}(\eta)}\eta 
\in \BP_{\cl,\,\INT}^{(j)}$ and 
$f_{j}^{\max}\eta:=f_{j}^{\vp_{j}(\eta)}\eta 
\in \BP_{\cl,\,\INT}^{(j)}$. The next lemma follows 
immediately from Lemma~\ref{lem:N}. 
%
%
\begin{lem} \label{lem:Ne}
Let $j \in I$. For every $\eta \in \BP_{\cl,\,\INT}^{(j)}$ and 
$N \in \BZ_{\ge 1}$, we have 
$e_{j}^{\max}(N\eta)=N(e_{j}^{\max}\eta)$ and 
$f_{j}^{\max}(N\eta)=N(f_{j}^{\max}\eta)$. 
\end{lem}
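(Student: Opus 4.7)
The plan is to derive this lemma directly from Lemma~\ref{lem:N} with essentially no additional work; the paper itself flags it as an immediate consequence, and I would record it as such.

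First, I would promote the single-step identity $N(e_{j}\eta) = e_{j}^{N}(N\eta)$ to an arbitrary power by induction on $k \ge 0$. The base case $k=0$ is trivial, and for the inductive step I would write
\[
N(e_{j}^{k+1}\eta) = N\bigl(e_{j}(e_{j}^{k}\eta)\bigr) = e_{j}^{N}\bigl(N(e_{j}^{k}\eta)\bigr) = e_{j}^{N}\bigl(e_{j}^{Nk}(N\eta)\bigr) = e_{j}^{N(k+1)}(N\eta),
\]
where the second equality uses Lemma~\ref{lem:N} applied to the path $e_{j}^{k}\eta$ (which still lies in $\BP_{\cl,\,\INT}^{(j)}$ by the remark just before \eqref{eq:ve}), and the third equality uses the inductive hypothesis. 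A minor point to check en route is that if $e_{j}^{k}\eta = \bzero$ for some $k \le \ve_{j}(\eta)$, then the identity reduces to $\bzero = \bzero$ by the convention $N\bzero = \bzero$ and $e_{j}\bzero = \bzero$, so nothing goes wrong.

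Now I would specialize $k = \ve_{j}(\eta)$ and combine with the identity $\ve_{j}(N\eta) = N\ve_{j}(\eta)$ from Lemma~\ref{lem:N}:
\[
N(e_{j}^{\max}\eta) = N\bigl(e_{j}^{\ve_{j}(\eta)}\eta\bigr) = e_{j}^{N\ve_{j}(\eta)}(N\eta) = e_{j}^{\ve_{j}(N\eta)}(N\eta) = e_{j}^{\max}(N\eta).
\]
The argument for $f_{j}^{\max}$ is word for word the same, using the companion identities $N(f_{j}\eta) = f_{j}^{N}(N\eta)$ and $\vp_{j}(N\eta) = N\vp_{j}(\eta)$.

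Since every step is an immediate application of Lemma~\ref{lem:N}, I do not expect any genuine obstacle; the only thing to be careful about is the bookkeeping with $\bzero$ in the iterated identity, which is handled by the stated conventions. I would therefore present the proof essentially as the two displayed lines above, with a one-line induction remark justifying the intermediate identity $N(e_{j}^{k}\eta) = e_{j}^{Nk}(N\eta)$.
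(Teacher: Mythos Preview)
Your proposal is correct and matches the paper's approach: the paper simply states that the lemma follows immediately from Lemma~\ref{lem:N} without writing out any details, and your inductive unwinding of $N(e_{j}^{k}\eta)=e_{j}^{Nk}(N\eta)$ followed by specialization to $k=\ve_{j}(\eta)$ is exactly the routine verification being implicitly invoked.
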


Now, for $\eta_{1},\,\eta_{2},\,\dots,\,\eta_{n} \in \BP_{\cl}$, 
define the concatenation 
$\eta_{1} \ast \eta_{2} \ast \cdots \ast 
\eta_{n} \in \BP_{\cl}$ by
%
%
\begin{align} \label{eq:cat}
& (\eta_{1} \ast \eta_{2} \ast \cdots \ast \eta_{n})(t):=
  \sum_{l=1}^{k-1} 
  \eta_{l}(1)+ \eta_{k}(nt-k+1) \nonumber \\[-1.5mm]
& \hspace{60mm} \text{for \, } \frac{k-1}{n} \le t \le 
\frac{k}{n} \text{\, and \,} 1 \le k \le n. 
\end{align}
For a subset $\BB$ of $\BP_{\cl}$ and $n \in \BZ_{\ge 1}$, 
we set 
$\BB^{\ast n}:=\bigl\{
 \eta_{1} \ast \eta_{2} \ast \cdots \ast \eta_{n} \mid 
 \text{$\eta_{k} \in \BB$ for $1 \le k \le n$}
\bigr\}$.

%
\begin{prop}[%
 {see \cite[Lemma 2.7]{Lit-A} and 
  \cite[Proposition~1.3.3]{NS-Tensor}}] \label{prop:cat}
Let $\BB$ be a subset of $\BP_{\cl,\,\INT}$ such that 
the set $\BB \cup \{\bzero\}$ is stable 
under the action of 
the root operators $e_{j}$ and $f_{j}$ for all $j \in I$; 
note that $\BB$ is a crystal with weights in $P_{\cl}$ 
by Theorem~\ref{thm:pc01}.
\begin{enu}
\item For every $n \in \BZ_{\ge 1}$, 
the set $\BB^{\ast n} \cup \{\bzero\}$ is stable 
under the root operators $e_{j}$ and $f_{j}$ for all $j \in I$. 
Therefore, $\BB^{\ast n}$ is a crystal with weights in $P_{\cl}$ 
by Theorem~\ref{thm:pc01}.

\item For every $n \in \BZ_{\ge 1}$, 
the crystal $\BB^{\ast n}$ is isomorphic as a crystal to the 
tensor product $\BB^{\otimes n}:=\BB \otimes \cdots \otimes \BB$ 
($n$ times), where the isomorphism is given by\,{\rm:}
$\eta_{1} \ast \eta_{2} \ast \cdots \ast \eta_{n} \mapsto
 \eta_{1} \otimes \eta_{2} \otimes \cdots \otimes \eta_{n}$ 
for $\eta_{1} \ast \eta_{2} \ast \cdots \ast \eta_{n} \in \BB^{\ast n}$. 
\end{enu}
\end{prop}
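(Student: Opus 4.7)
I would prove parts (1) and (2) simultaneously by induction on $n$, the base case $n=1$ being the hypothesis. Since concatenation is associative up to reparametrization and the root operators commute with reparametrization (they depend only on the image of the path), the inductive step reduces to the following two-path lemma: \emph{for any $\pi_1, \pi_2 \in \BP_{\cl,\INT}$, the concatenation $\pi := \pi_1 \ast \pi_2$ again belongs to $\BP_{\cl,\INT}$, and for every $j \in I$,}
\begin{equation*}
e_j \pi =
\begin{cases}
(e_j \pi_1) \ast \pi_2 & \text{if } \ve_j(\pi_2) \le \vp_j(\pi_1), \\
\pi_1 \ast (e_j \pi_2) & \text{if } \ve_j(\pi_2) > \vp_j(\pi_1),
\end{cases}
\end{equation*}
\emph{with the symmetric formula for $f_j$.} This yields part (1) immediately, and since its right-hand side is precisely Kashiwara's tensor product rule for crystals, iterating it establishes the isomorphism in part (2) via $\eta_1 \ast \cdots \ast \eta_n \mapsto \eta_1 \otimes \cdots \otimes \eta_n$.

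Membership in $\BP_{\cl,\INT}$ is the easier half. By \eqref{eq:cat}, $H_j^{\pi}$ equals $H_j^{\pi_1}(2t)$ on $[0,\tfrac12]$ and equals $H_j^{\pi_1}(1) + H_j^{\pi_2}(2t-1)$ on $[\tfrac12, 1]$, so every local minimum of $H_j^{\pi}$ is either interior to one of the two half-intervals (hence a local minimum of some $H_j^{\pi_k}$, hence integral by assumption) or attained at the seam $t = \tfrac12$, where its value equals $H_j^{\pi_1}(1) = \pair{\wt(\pi_1)}{\alpha_j^\vee} \in \BZ$ by \eqref{eq:pair}.

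For the tensor product rule, I set $A := \ve_j(\pi_1)$, $B := \vp_j(\pi_1)$, $C := \ve_j(\pi_2)$, so that $m_j^{\pi_1} = -A$, $H_j^{\pi_1}(1) = B - A$, $m_j^{\pi_2} = -C$, and a direct computation gives $m_j^{\pi} = -A + \min\{0, B - C\}$. When $C \le B$, the global minimum $-A$ is first achieved in the first half, so the parameters $t_0 \le t_1$ of \eqref{eq:t1} both lie in $[0,\tfrac12]$; the reparametrization $s = 2t$ in \eqref{eq:ro_e} then exhibits $e_j \pi$ as $(e_j \pi_1) \ast \pi_2$, the rigid translation on the second half by $\alpha_j$ being consistent with $\wt(e_j \pi_1) = \wt(\pi_1) + \alpha_j$. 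When $C > B$, one has $m_j^{\pi} < -A$, forcing $t_1 \in (\tfrac12, 1]$; comparing $H_j^{\pi}(\tfrac12) = B - A$ with $m_j^{\pi} + 1 = B - A - C + 1$ shows $t_0 \ge \tfrac12$, and a symmetric reparametrization identifies $e_j \pi$ with $\pi_1 \ast (e_j \pi_2)$. The case of $f_j$ is handled analogously via \eqref{eq:ro_f}--\eqref{eq:t2}.

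\textbf{Main obstacle.} The one genuinely delicate point is the boundary case $C = 1$, $B = 0$ in the branch $C > B$, where $H_j^{\pi}$ attains the critical value $m_j^{\pi} + 1$ exactly at the seam $t = \tfrac12$. Here I must invoke $\pi_2 \in \BP_{\cl,\INT}^{(j)}$ to know that the local minima of $H_j^{\pi_2}$ are integral; this forces strict monotonicity of $H_j^{\pi_2}$ on $[0, 2t_1 - 1]$ (with no spurious return to the value $0$), which in turn forces $t_0 = \tfrac12$ cleanly, so that the reflection interval $[t_0, t_1]$ sits entirely inside $[\tfrac12, 1]$ and the formula $e_j \pi = \pi_1 \ast (e_j \pi_2)$ holds without further adjustment. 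Everything else is routine bookkeeping comparing \eqref{eq:ro_e} on the two half-intervals.
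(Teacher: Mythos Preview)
The paper does not supply its own proof of this proposition; it is stated with a bare citation to \cite[Lemma~2.7]{Lit-A} and \cite[Proposition~1.3.3]{NS-Tensor} and then used as input elsewhere. Your outline is precisely the standard Littelmann argument underlying those references: reduce to the two-factor case, compute $H_j^{\pi}$ on the two halves, locate the interval $[t_0,t_1]$ relative to the seam, and read off the tensor-product rule. So there is nothing to compare against within this paper, and your approach is the expected one.

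One small correction to your ``main obstacle'' paragraph: the integrality of the local minima of $H_j^{\pi_2}$ does \emph{not} force strict monotonicity of $H_j^{\pi_2}$ on $[0,2t_1-1]$, nor does it rule out returns to the value~$0$. For instance, $H_j^{\pi_2}$ may first rise, then have a local minimum at~$0$, and only afterward descend to~$-1$; in that scenario $t_0>\tfrac12$ rather than $t_0=\tfrac12$. Fortunately this does no harm: your own continuity argument (that $H_j^{\pi}(\tfrac12)=B-A\ge m_j^{\pi}+1$ while $H_j^{\pi}(t_1)=m_j^{\pi}$, so the value $m_j^{\pi}+1$ is attained somewhere in $[\tfrac12,t_1]$, and $t_0$ is the \emph{maximum} of such points) already gives $t_0\ge\tfrac12$ unconditionally in the branch $C>B$, and once $t_0\ge\tfrac12$ the identification $2t_0-1=(\text{the }t_0\text{ for }\pi_2)$ follows directly from $H_j^{\pi}(t)=H_j^{\pi_1}(1)+H_j^{\pi_2}(2t-1)$ on $[\tfrac12,1]$. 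So the boundary case requires no special treatment beyond what you already wrote two sentences earlier; you can simply drop the strict-monotonicity claim.
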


%
\subsection{Lakshmibai-Seshadri paths.}
\label{subsec:LS}

Let us recall the definition of 
Lakshmibai-Seshadri (LS for short) paths 
from \cite[\S4]{Lit-A}. In this subsection, 
we fix an integral weight $\lambda \in P$
which is not necessarily dominant. 
%
%
\begin{dfn} \label{dfn:Bruhat}
For $\mu,\,\nu \in W\lambda$, 
let us write $\mu \ge \nu$ if there exists a sequence 
$\mu=\mu_{0},\,\mu_{1},\,\dots,\,\mu_{n}=\nu$ 
of elements in $W\lambda$ and a sequence 
$\beta_{1},\,\dots,\,\beta_{n} \in \prr$ of 
positive real roots such that
$\mu_{k}=r_{\beta_{k}}(\mu_{k-1})$ and 
$\pair{\mu_{k-1}}{\beta^{\vee}_{k}} < 0$ 
for $k=1,\,2,\,\dots,\,n$. 
If $\mu \ge \nu$, then we define $\dist(\mu,\nu)$ to 
be the maximal length $n$ of all possible such sequences 
$\mu_{0},\,\mu_{1},\,\dots,\,\mu_{n}$ for $(\mu,\nu)$.
\end{dfn}
%
%
\begin{dfn} \label{dfn:achain}
For $\mu,\,\nu \in W\lambda$ with $\mu > \nu$ and 
a rational number $0 < \sigma < 1$, 
a $\sigma$-chain for $(\mu,\nu)$ is, 
by definition, a sequence $\mu=\mu_{0} > \mu_{1} > 
\dots > \mu_{n}=\nu$ of elements in $W\lambda$ 
such that $\dist(\mu_{k-1},\mu_{k})=1$ and 
$\sigma\pair{\mu_{k-1}}{\beta_{k}^{\vee}} \in \BZ_{< 0}$ 
for all $k=1,\,2,\,\dots,\,n$, 
where $\beta_{k}$ is the positive real root such that 
$r_{\beta_{k}}\mu_{k-1}=\mu_{k}$.
%
%
\end{dfn}
%
%
\begin{dfn} \label{dfn:LS}
An LS path of shape $\lambda \in P$ is, by definition, 
a pair $(\ud{\nu}\,;\,\ud{\sigma})$ of a sequence 
$\ud{\nu}:\nu_{1} > \nu_{2} > \cdots > \nu_{s}$ of 
elements in $W\lambda$ and a sequence 
$\ud{\sigma}:0=\sigma_{0} < \sigma_{1} < \cdots < \sigma_{s}=1$ of 
rational numbers satisfying the condition that 
there exists a $\sigma_{k}$-chain for $(\nu_{k},\,\nu_{k+1})$ 
for each $k=1,\,2,\,\dots,\,s-1$. We denote by $\BB(\lambda)$ 
the set of all LS paths of shape $\lambda$.
\end{dfn}

Let $\pi=(\nu_{1},\,\nu_{2},\,\dots,\,\nu_{s}\,;\,
\sigma_{0},\,\sigma_{1},\,\dots,\,\sigma_{s})$ be a pair of 
a sequence $\nu_{1},\,\nu_{2},\,\dots,\,\nu_{s}$ of integral weights 
with $\nu_{k} \ne \nu_{k+1}$ for $1 \le k \le s-1$ and 
a sequence $0=\sigma_{0} < \sigma_{1} < \cdots < \sigma_{s}=1$ 
of rational numbers. We identify $\pi$ with 
the following piecewise-linear, continuous map 
$\pi:[0,1] \rightarrow \BR \otimes_{\BZ} P$: 
%
%
\begin{equation} \label{eq:path}
\pi(t)=\sum_{l=1}^{k-1}
(\sigma_{l}-\sigma_{l-1})\nu_{l}+
(t-\sigma_{k-1})\nu_{k} \quad 
\text{for $\sigma_{k-1} \le t \le \sigma_{k}$, $1 \le k \le s$}. 
\end{equation}
%
%
\begin{rem} \label{rem:str01}
It is obvious from the definition that for each $\nu \in W\lambda$, 
$\pi_{\nu}:=(\nu\,;\,0,1)$ is an LS path of shape $\lambda$, 
which corresponds (under \eqref{eq:path}) to the straight line 
$\pi_{\nu}(t)=t\nu$, $t \in [0,1]$, connecting $0$ to $\nu$. 
\end{rem}

For each $\pi \in \BB(\lambda)$, we define $\cl(\pi):[0,1] 
\rightarrow \BR \otimes_{\BZ} P_{\cl}$ by: 
$(\cl(\pi))(t)=\cl(\pi(t))$ for $t \in [0,1]$. We set 
\begin{equation*} 
\BB(\lambda)_{\cl}:=
\bigl\{\cl(\pi) \mid \pi \in \BB(\lambda)\bigr\}. 
\end{equation*}
We know from \cite[\S3.1]{NS-Tensor} that 
$\BB(\lambda)_{\cl}$ is a subset of $\BP_{\cl,\,\INT}$ 
such that $\BB(\lambda)_{\cl} \cup \{\bzero\}$ is 
stable under the action of the root operators $e_{j}$ and $f_{j}$ for 
all $j \in I$. In particular, $\BB(\lambda)_{\cl}$ is 
a crystal with weights in $P_{\cl}$ by Theorem~\ref{thm:pc01}. 

Here we recall the notion of simple crystals.
A crystal $B$ with weights in $P_{\cl}$ is said to be regular 
if for every proper subset $J \subsetneq I$, 
$B$ is isomorphic as a crystal for $U_{q}(\Fg_{J})$ 
to the crystal basis of a finite-dimensional $U_{q}(\Fg_{J})$-module, 
where $\Fg_{J}$ is the (finite-dimensional) 
Levi subalgebra of $\Fg$ corresponding to $J$ (see \cite[\S2.2]{Kas-OnL}). 
A regular crystal $B$ with weights $P_{\cl}$ is 
said to be simple if the set of extremal elements in $B$ 
coincides with a $W$-orbit in $B$ through an (extremal) element in $B$ 
(cf. \cite[Definition~4.9]{Kas-OnL}). 

\begin{rem} \label{rem:simple}
\mbox{}
\begin{enu}
\item
The crystal graph of a simple crystal is connected 
(see \cite[Lemma~4.10]{Kas-OnL}). 

\item 
A tensor product of simple crystals is also a simple crystal 
(see \cite[Lemma~4.11]{Kas-OnL}).
\end{enu}
\end{rem}

We know the following theorem from 
\cite[Proposition~5.8]{NS-IMRN}, 
\cite[Theorem 2.1.1 and Proposition~3.4.2]{NS-Adv}, and 
\cite[Theorem 3.2]{NS-Tensor}. 
%
%
\begin{thm} \label{thm:LScl}
\mbox{}
\begin{enu}
\item For each $i \in I_{0}$, the crystal $\BB(\vpi_{i})_{\cl}$ 
is isomorphic, as a crystal with weights in $P_{\cl}$, 
to the crystal basis of the level-zero fundamental 
representation $W(\vpi_{i})$, 
introduced in \cite[Theorem~5.17]{Kas-OnL}, 
of the quantum affine algebra $U_{q}'(\Fg)$. 
In particular, $\BB(\vpi_{i})_{\cl}$ is a simple crystal. 

\item Let $i_{1},\,i_{2},\,\dots,\,i_{p}$ be an arbitrary sequence of 
elements of $I_{0}$ (with repetitions allowed), and set 
$\lambda:=\vpi_{i_{1}}+\vpi_{i_{2}}+ \cdots + \vpi_{i_{p}}$. 
The crystal $\BB(\lambda)_{\cl}$
is isomorphic, as a crystal with weights in $P_{\cl}$, 
to the tensor product 
$\BB(\vpi_{i_{1}})_{\cl} \otimes \BB(\vpi_{i_{2}})_{\cl} \otimes 
\cdots \otimes \BB(\vpi_{i_{p}})_{\cl}$. In particular, 
$\BB(\lambda)_{\cl}$ is also a simple crystal by Remark~\ref{rem:simple}\,(2).
\end{enu}
\end{thm}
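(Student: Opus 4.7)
The plan is to handle the two parts separately, each by combining Littelmann's LS path machinery with Kashiwara's theory of level-zero extremal and fundamental representations.

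For part~(1), I would realize $\BB(\vpi_{i})$ first as the crystal basis of the extremal weight module $V(\vpi_{i})$, via the affine version of Littelmann's path theorem, and then identify $\BB(\vpi_{i})_{\cl}$ with the crystal basis of $W(\vpi_{i})$ by passing through Kashiwara's quotient construction $W(\vpi_{i}) = V(\vpi_{i})/(z_{i}-1)V(\vpi_{i})$, where $z_{i}$ is the central-like operator of affine weight $-d_{i}\delta$ acting bijectively on the $\vpi_{i}$-weight space of $V(\vpi_{i})$. On the path side, the action of $z_{i}$ corresponds to translation by $d_{i}\delta$, so after applying $\cl$, which by the excerpt commutes with all root operators on $\BB(\vpi_{i})$, the fibers of $\cl$ become exactly the $z_{i}$-orbits. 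This yields the desired isomorphism of crystals with weights in $P_{\cl}$, and simplicity of $\BB(\vpi_{i})_{\cl}$ is then a direct consequence of Kashiwara's \cite[Theorem~5.17]{Kas-OnL}.

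For part~(2), the strategy is to combine the Littelmann concatenation theorem (in its level-zero adaptation from \cite{NS-IMRN}, \cite{NS-Adv}) with Proposition~\ref{prop:cat} of the present paper. A path in $\BB(\lambda)$ with $\lambda = \vpi_{i_{1}}+\cdots+\vpi_{i_{p}}$ decomposes, via the rational breakpoints coming from the $\sigma_{k}$'s, as a concatenation $\eta_{1} \ast \cdots \ast \eta_{p}$ with $\eta_{k} \in \BB(\vpi_{i_{k}})$, and this decomposition is compatible with the root operators. Projecting by $\cl$ one obtains an injective crystal morphism
\begin{equation*}
\BB(\lambda)_{\cl} \longrightarrow \BB(\vpi_{i_{1}})_{\cl} \ast \cdots \ast \BB(\vpi_{i_{p}})_{\cl} \simeq \BB(\vpi_{i_{1}})_{\cl} \otimes \cdots \otimes \BB(\vpi_{i_{p}})_{\cl},
\end{equation*}
the last isomorphism being Proposition~\ref{prop:cat}\,(2). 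To upgrade this to a bijection I would exploit the simplicity, hence connectedness, of the target, which follows from part~(1) together with Remark~\ref{rem:simple}\,(2), combined with the fact that the straight-line concatenation $\pi_{\vpi_{i_{1}}} \ast \cdots \ast \pi_{\vpi_{i_{p}}}$ lies in the image of $\BB(\lambda)_{\cl}$ and maps to an extremal element of the tensor product.

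The main obstacle is part~(1), specifically the matching of $\BB(\vpi_{i})_{\cl}$ with the crystal basis of $W(\vpi_{i})$. The source of difficulty is that $\BB(\vpi_{i})$ is an infinite crystal whose underlying orbit $W\vpi_{i}$ extends indefinitely in the $\delta$-direction, so one must carefully analyze this orbit and its classical projection, verify that the quotient $\BB(\vpi_{i})_{\cl}$ inherits the regularity property, and check that the $z_{i}$-orbit structure coincides with the equivalence relation induced by $\cl$. This is precisely the work carried out in \cite[Proposition~5.8]{NS-IMRN}, \cite[Theorem~2.1.1 and Proposition~3.4.2]{NS-Adv}, and \cite[Theorem~3.2]{NS-Tensor}, and a substantially shorter proof does not appear to be available.
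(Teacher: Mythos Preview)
The paper does not supply its own proof of this theorem: it is stated as a known result, with the sentence ``We know the following theorem from \cite[Proposition~5.8]{NS-IMRN}, \cite[Theorem 2.1.1 and Proposition~3.4.2]{NS-Adv}, and \cite[Theorem 3.2]{NS-Tensor}'' immediately preceding it. Your proposal is a reasonable outline of the arguments in those references and, indeed, you yourself cite exactly the same three sources at the end; so there is no independent proof in the paper to compare against, and your sketch is consistent with the cited approach.
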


%
\begin{rem} \label{rem:extremal}
Let $\lambda \in \sum_{i \in I_{0}}\BZ_{\ge 0}\vpi_{i}$ be 
a level-zero dominant integral weight. 
\begin{enu}
\item It can be easily seen from Remark~\ref{rem:str01} that 
$\eta_{\mu}(t):=t\mu$ is contained in $\BB(\lambda)_{\cl}$ 
for all $\mu \in \cl(W\lambda)=W_{0}\cl(\lambda)$. 

\item We know from \cite[Lemma~3.19]{NS-Tensor} that 
$\eta_{\cl(\lambda)} \in \BB(\lambda)_{\cl}$ is 
an extremal element in the sense of \cite[\S3.1]{Kas-OnL}. 
Therefore, it follows from \cite[Lemma~1.5]{AK} and 
the definition of simple crystals that 
for each $\eta \in \BB(\lambda)_{\cl}$, there exist 
$j_{1},\,j_{2},\,\dots,\,j_{p} \in I$ such that 
\begin{equation*}
e_{j_{p}}^{\max} \cdots e_{j_{2}}^{\max}e_{j_{1}}^{\max}\eta=
\eta_{\cl(\lambda)}.
\end{equation*}
Also, by the same argument as for \cite[Lemma~1.5]{AK}, 
we can show that 
for each $\eta \in \BB(\lambda)_{\cl}$, there exist 
$k_{1},\,k_{2},\,\dots,\,k_{q} \in I$ such that 
\begin{equation*}
f_{k_{q}}^{\max} \cdots f_{k_{2}}^{\max}f_{k_{1}}^{\max}\eta=
\eta_{\cl(\lambda)}.
\end{equation*}
\end{enu}
\end{rem}
%
%
\begin{lem} \label{lem:Nlambda}
Let $\lambda \in \sum_{i \in I_{0}}\BZ_{\ge 0}\vpi_{i}$ be 
a level-zero dominant integral weight, and let $n \in \BZ_{\ge 1}$. 
Then, the set $\BB(\lambda)_{\cl}^{\ast n}$ 
is identical to $\BB(n\lambda)_{\cl}$. 
\end{lem}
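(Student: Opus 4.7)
The plan is to exhibit both $\BB(\lambda)_{\cl}^{\ast n}$ and $\BB(n\lambda)_{\cl}$ as connected subsets of $\BP_{\cl,\INT}$, both closed (together with $\bzero$) under the same intrinsic root operators, and sharing a common element; this will force them to coincide as subsets of $\BP_{\cl}$. The natural common element is the straight-line path $\eta^{\ast}(t) := nt\,\cl(\lambda)$ connecting $0$ to $\cl(n\lambda) = n\,\cl(\lambda)$.

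First I would verify that both sets are simple crystals, hence connected by Remark~\ref{rem:simple}\,(1). Writing $\lambda = \sum_{i \in I_{0}} m_{i}\vpi_{i}$, Theorem~\ref{thm:LScl}\,(2) identifies $\BB(\lambda)_{\cl}$ with a tensor product of the simple crystals $\BB(\vpi_{i})_{\cl}$ from Theorem~\ref{thm:LScl}\,(1); this tensor product is simple by Remark~\ref{rem:simple}\,(2). Applying Proposition~\ref{prop:cat}\,(2) gives $\BB(\lambda)_{\cl}^{\ast n} \simeq \BB(\lambda)_{\cl}^{\otimes n}$, which is again simple by the same remark. Analogously, $n\lambda = \sum_{i \in I_{0}} (nm_{i})\vpi_{i}$ is level-zero dominant, so a direct application of Theorem~\ref{thm:LScl}\,(2) yields the simplicity of $\BB(n\lambda)_{\cl}$.

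Next I would locate $\eta^{\ast}$ inside each set. On the LS-path side, $\pi_{n\lambda} := (n\lambda\,;\,0,1)$ is a straight-line LS path of shape $n\lambda$ by Remark~\ref{rem:str01}, so $\cl(\pi_{n\lambda}) = \eta^{\ast}$ lies in $\BB(n\lambda)_{\cl}$. On the concatenation side, $\eta_{\cl(\lambda)}(t) := t\,\cl(\lambda)$ belongs to $\BB(\lambda)_{\cl}$ by Remark~\ref{rem:extremal}\,(1), and a short calculation from the concatenation formula \eqref{eq:cat} shows that the $n$-fold concatenation $\eta_{\cl(\lambda)} \ast \cdots \ast \eta_{\cl(\lambda)}$ is precisely $\eta^{\ast}$.

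Finally, I would conclude via closure and connectedness. Both $\BB(\lambda)_{\cl}^{\ast n} \cup \{\bzero\}$ (by Proposition~\ref{prop:cat}\,(1)) and $\BB(n\lambda)_{\cl} \cup \{\bzero\}$ (by the discussion preceding Theorem~\ref{thm:LScl}) are stable under the root operators $e_{j},\,f_{j}$ for $j \in I$, which act intrinsically on $\BP_{\cl,\INT}$. Since each set is connected and contains $\eta^{\ast}$, each must equal the $\bzero$-avoiding orbit of $\eta^{\ast}$ under iterated root operators, and therefore the two sets coincide. I do not anticipate a serious technical obstacle; the main point requiring care is just confirming that the concatenation of $n$ copies of the straight line $\eta_{\cl(\lambda)}$ reparametrizes correctly to the single straight line $\eta^{\ast}$, so that the common-element and connectedness argument can be applied to identify the two orbits.
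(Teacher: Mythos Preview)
Your proposal is correct and follows essentially the same route as the paper: both arguments rest on identifying the $n$-fold concatenation $\eta_{\cl(\lambda)} \ast \cdots \ast \eta_{\cl(\lambda)}$ with the straight line $\eta_{\cl(n\lambda)}$, on the stability of each set under the intrinsic root operators, and on the simplicity (hence connectedness) of each crystal via Theorem~\ref{thm:LScl} and Remark~\ref{rem:simple}. The paper organizes this as two separate inclusions while you phrase it symmetrically as ``both sets equal the root-operator orbit of the common element'', but the content is the same.
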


\begin{proof}
First, let us show the inclusion
$\BB(\lambda)_{\cl}^{\ast n} \supset \BB(n\lambda)_{\cl}$. 
It is easily seen that the element 
$\eta_{\cl(\lambda)} \ast \cdots \ast 
\eta_{\cl(\lambda)} \in \BB(\lambda)_{\cl}^{\ast n}$ is identical to 
$\eta_{\cl(n\lambda)}$. Hence it follows that the crystal 
$\BB(\lambda)_{\cl}^{\ast n}$ contains the connected component 
containing $\eta_{\cl(n\lambda)} \in \BB(n\lambda)_{\cl}$. 
Here we recall that the crystal $\BB(n\lambda)_{\cl}$ is simple 
(see Theorem~\ref{thm:LScl}), and hence connected 
(see Remark~\ref{rem:simple}\,(1)). 
Therefore, the connected component above 
is identical to $\BB(n\lambda)_{\cl}$. Thus, we have shown the inclusion
$\BB(\lambda)_{\cl}^{\ast n} \supset \BB(n\lambda)_{\cl}$. 

Now, it follows from Proposition~\ref{prop:cat} that 
$\BB(\lambda)_{\cl}^{\ast n}$ is isomorphic as a crystal to 
the tensor product $\BB(\lambda)_{\cl}^{\otimes n}$. 
Therefore, 
$\BB(\lambda)_{\cl}^{\ast n} \cong \BB(\lambda)_{\cl}^{\otimes n}$ is 
a simple crystal by Theorem~\ref{thm:LScl}\,(2) and Remark~\ref{rem:simple}\,(2), 
and hence connected by Remark~\ref{rem:simple}\,(1). 
From this, we conclude that $\BB(\lambda)_{\cl}^{\ast n}=
\BB(n\lambda)_{\cl}$, as desired.
\end{proof}

%
\subsection{Characterization of the set $\BB(\lambda)_{\cl}$.}
\label{subsec:charls}
%
%
\begin{thm} \label{thm:charls}
Let $\lambda \in \sum_{i \in I_{0}}\BZ_{\ge 0}\vpi_{i}$ be 
a level-zero dominant integral weight. 
If a subset $\BB$ of $\BP_{\cl,\,\INT}$ satisfies the following two
conditions, then the set $\BB$ is identical to $\BB(\lambda)_{\cl}$.
\begin{enumerate}
\renewcommand{\labelenumi}{\rm (\alph{enumi})}%
\item The set $\BB \cup \{\bzero\}$ is stable 
under the action of the root operators $f_{j}$ for all $j \in I$. 

\item For each $\eta \in \BB$, there exist a sequence 
$\mu_{1},\,\mu_{2},\,\dots,\,\mu_{s}$ of elements in 
$\cl(W\lambda)=W_{0}\cl(\lambda)$ and 
a sequence $0=\sigma_{0} < \sigma_{1} < \cdots < \sigma_{s}=1$ of 
rational numbers such that 
\begin{equation} \label{eq:charls}
\eta(t)=\sum_{l=1}^{k-1}
(\sigma_{l}-\sigma_{l-1})\mu_{l}+
(t-\sigma_{k-1})\mu_{k} \quad 
\text{\rm for $\sigma_{k-1} \le t \le \sigma_{k}$, $1 \le k \le s$}. 
\end{equation}
\end{enumerate}
\end{thm}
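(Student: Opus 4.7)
My plan is to prove the two inclusions $\BB \subset \BB(\lambda)_{\cl}$ and $\BB(\lambda)_{\cl} \subset \BB$ separately, both pivoting on the straight-line path $\eta_{\cl(\lambda)}(t):=t\cl(\lambda)$, which lies in $\BB(\lambda)_{\cl}$ by Remark~\ref{rem:extremal}(1). The structural key is that $\BB(\lambda)_{\cl}$ is a simple, hence connected, crystal by Theorem~\ref{thm:LScl}(2) and Remark~\ref{rem:simple}(2); consequently Remark~\ref{rem:extremal}(2) furnishes, for each $\zeta \in \BB(\lambda)_{\cl}$, sequences in $I$ whose compositions of $e_{j}^{\max}$ (or, dually, of $f_{k}^{\max}$) carry $\zeta$ to $\eta_{\cl(\lambda)}$.

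\emph{For the inclusion $\BB \subset \BB(\lambda)_{\cl}$,} I take any $\eta \in \BB$ and use (b) to write $\eta(t) = \sum_{l=1}^{k-1}(\sigma_{l}-\sigma_{l-1})\mu_{l} + (t-\sigma_{k-1})\mu_{k}$ on $[\sigma_{k-1},\sigma_{k}]$ with $\mu_{k} \in \cl(W\lambda)$. The hypothesis $\BB \subset \BP_{\cl,\INT}$ forces every local minimum of each $H^{\eta}_{j}$ to be an integer; combined with the integrality of the pairings $\pair{\mu_{l}}{\alpha_{j}^{\vee}}$, this imposes sharp divisibility conditions on the differences $\sigma_{l}-\sigma_{l-1}$. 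I aim to exploit these to lift $\eta$ to an LS path $\pi = (\nu_{1},\dots,\nu_{s};\sigma_{0},\dots,\sigma_{s}) \in \BB(\lambda)$ by choosing $\nu_{k} \in W\lambda$ with $\cl(\nu_{k}) = \mu_{k}$ so that the Bruhat-order $\sigma_{k}$-chain condition of Definition~\ref{dfn:achain} is satisfied; the integrality of the breakpoint values of $H^{\eta}_{j}$ should supply exactly the divisibility needed for such chains to exist. This would give $\eta = \cl(\pi) \in \BB(\lambda)_{\cl}$.

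\emph{For the inclusion $\BB(\lambda)_{\cl} \subset \BB$,} I assume $\BB$ is non-empty (otherwise the conclusion fails trivially, since $\BB(\lambda)_{\cl} \ne \emptyset$). By the first inclusion $\BB$ contains an element of $\BB(\lambda)_{\cl}$; applying Remark~\ref{rem:extremal}(2) to it yields a sequence $f_{k_{q}}^{\max}\cdots f_{k_{1}}^{\max}$ sending it to $\eta_{\cl(\lambda)}$, and condition (a) keeps every intermediate path in $\BB$, so $\eta_{\cl(\lambda)} \in \BB$. Now fix $\zeta \in \BB(\lambda)_{\cl}$ and use Remark~\ref{rem:extremal}(2) to obtain $j_{1},\dots,j_{p} \in I$ with $e_{j_{p}}^{\max}\cdots e_{j_{1}}^{\max}\zeta = \eta_{\cl(\lambda)}$. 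Setting $\xi_{0} := \zeta$ and $\xi_{i} := e_{j_{i}}^{\max}\xi_{i-1}$, Theorem~\ref{thm:pc01}(1) gives $\xi_{i-1} = f_{j_{i}}^{\ve_{j_{i}}(\xi_{i-1})}\xi_{i}$; iterating condition (a) from $\xi_{p} = \eta_{\cl(\lambda)} \in \BB$ down to $\xi_{0} = \zeta$ places $\zeta$ in $\BB$.

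The main obstacle is the lifting step of the first inclusion: promoting the projected directions $\mu_{k} \in \cl(W\lambda)$ to genuine elements $\nu_{k} \in W\lambda$ forming an LS path is not automatic from (b) and the integrality condition alone. I expect to argue by induction on $s$, choosing at each stage an index $j \in I$ whose $f_{j}^{\max}$-cut-points $t_{0},t_{1}$ from \eqref{eq:t2} align with an existing breakpoint $\sigma_{k}$, so that $f_{j}^{\max}\eta$, which lies in $\BB$ by (a), has a strictly simpler PL decomposition; one then applies the inductive hypothesis to $f_{j}^{\max}\eta$ and inverts using Theorem~\ref{thm:pc01}(1), exploiting that $\BB(\lambda)_{\cl}$ is $e_{j}$-closed as a crystal. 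The combinatorial subtlety of choosing $j$ correctly, depending on the sequence $\mu_{1},\dots,\mu_{s}$ and the integrality constraints, is where the bulk of the technical work lies.
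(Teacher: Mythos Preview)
Your argument for the reverse inclusion $\BB(\lambda)_{\cl} \subset \BB$ is essentially the paper's: first land $\eta_{\cl(\lambda)}$ in $\BB$ via $f^{\max}$-operators and condition~(a), then pull an arbitrary $\zeta \in \BB(\lambda)_{\cl}$ back using Remark~\ref{rem:extremal}(2) and Theorem~\ref{thm:pc01}(1). That part is fine.

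The forward inclusion $\BB \subset \BB(\lambda)_{\cl}$ is where your proposal has a genuine gap. Your primary plan---lifting $\eta$ directly to an LS path $\pi \in \BB(\lambda)$ by choosing preimages $\nu_{k}$ of the $\mu_{k}$ so that the $\sigma_{k}$-chain condition holds---is essentially the statement of the paper's main theorem (that quantum LS paths coincide with projected LS paths), not something that follows from (b) and integrality alone. Condition~(b) together with $\eta \in \BP_{\cl,\INT}$ does \emph{not} by itself force $\eta$ to be a projected LS path; one can write down rational paths with directions in $\cl(W\lambda)$ and integer local minima that admit no such lift. Condition~(a) must enter, and your fallback induction on $s$ is where you try to use it---but that induction is not well-founded: applying a root operator $f_{j}$ can introduce a new breakpoint at $t_{1}$ (see \eqref{eq:ro_f} and Proposition~\ref{prop:stable}, Case~1), so $f_{j}^{\max}\eta$ need not have fewer linear pieces than $\eta$, and you give no argument that a suitable $j$ always exists to force a decrease.

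The paper sidesteps all of this with a scaling trick you are missing. Given $\eta$ of the form~(b), choose $N$ with $N\sigma_{u} \in \BZ$ for all $u$; then $N\eta$ is a concatenation of straight lines $\eta_{\mu_{l}} \in \BB(\lambda)_{\cl}$, hence $N\eta \in \BB(\lambda)_{\cl}^{\ast N} = \BB(N\lambda)_{\cl}$ by Lemma~\ref{lem:Nlambda}. Now Remark~\ref{rem:extremal}(2) gives indices with $f_{k_{q}}^{\max}\cdots f_{k_{1}}^{\max}(N\eta) = \eta_{\cl(N\lambda)}$, and the commutation $f_{j}^{\max}(N\eta) = N(f_{j}^{\max}\eta)$ of Lemma~\ref{lem:Ne} (together with condition~(a), which keeps each $f^{\max}$-iterate of $\eta$ inside $\BB \subset \BP_{\cl,\INT}$ so that Lemma~\ref{lem:Ne} applies) lets you unscale to $f_{k_{q}}^{\max}\cdots f_{k_{1}}^{\max}\eta = \eta_{\cl(\lambda)}$. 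Inverting with $e$-operators and the stability of $\BB(\lambda)_{\cl}$ then gives $\eta \in \BB(\lambda)_{\cl}$. This avoids any lift to $\BB(\lambda)$ and any induction on the number of segments.
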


\begin{rem}
The equality $\BB=\BB(\lambda)_{\cl}$ also holds 
when we replace the root operators $f_{j}$ for $j \in I$ by $e_{j}$ 
for $j \in I$ in the theorem above; for its proof, simply replace $f_{j}$'s 
by $e_{j}$'s in the proof below. 
\end{rem}

\begin{proof}[Proof of Theorem~\ref{thm:charls}]
First, let us show the inclusion $\BB \subset \BB(\lambda)_{\cl}$. 
Fix an element $\eta \in \BB$ arbitrarily, and assume that 
$\eta$ is of the form \eqref{eq:charls}. 
Take $N \in \BZ_{\ge 1}$ such that $N\sigma_{u} \in \BZ$ 
for all $0 \le u \le s$. Then, the element 
$N\eta \in \BP_{\cl,\,\INT}$ is of the form:
\begin{equation*}
N\eta=
 \underbrace{\eta_{\mu_{1}} \ast \cdots \ast \eta_{\mu_{1}}}_{%
   \text{$N(\sigma_{1}-\sigma_{0})$-times}} \ast
 \underbrace{\eta_{\mu_{2}} \ast \cdots \ast \eta_{\mu_{2}}}_{%
   \text{$N(\sigma_{2}-\sigma_{1})$-times}} \ast \cdots \ast 
 \underbrace{\eta_{\mu_{s}} \ast \cdots \ast \eta_{\mu_{s}}}_{%
   \text{$N(\sigma_{s}-\sigma_{s-1})$-times}}.
\end{equation*}
Since $\eta_{\mu} \in \BB(\lambda)_{\cl}$ 
for every $\mu \in \cl(W\lambda)$ (see Remark~\ref{rem:extremal}\,(1)), 
we have $N\eta \in \BB(\lambda)_{\cl}^{\ast N}$, and hence
$N\eta \in \BB(N\lambda)_{\cl}$ by Lemma~\ref{lem:Nlambda}. 
By Remark~\ref{rem:extremal}, 
there exists $k_{1},\,k_{2},\,\dots,\,k_{q} \in I$ such that 
\begin{equation*}
f_{k_{q}}^{\max} \cdots 
f_{k_{2}}^{\max}f_{k_{1}}^{\max}(N\eta)=
\eta_{\cl(N\lambda)}.
\end{equation*}
Using Lemma~\ref{lem:Ne} and condition (a) repeatedly, 
we deduce that 
\begin{equation*}
f_{k_{q}}^{\max} \cdots 
f_{k_{2}}^{\max}f_{k_{1}}^{\max}(N\eta) =
N(f_{k_{q}}^{\max} \cdots 
f_{k_{2}}^{\max}f_{k_{1}}^{\max}\eta).
\end{equation*}
Combining these equalities, we obtain
$N(f_{k_{q}}^{\max} \cdots 
f_{k_{2}}^{\max}f_{k_{1}}^{\max}\eta) = \eta_{\cl(N\lambda)}$. 
Since $\eta_{\cl(N\lambda)}=N\eta_{\cl(\lambda)}$, we get 
\begin{equation} \label{eq:charls01}
f_{k_{q}}^{\max} \cdots 
f_{k_{2}}^{\max}f_{k_{1}}^{\max}
\eta=\eta_{\cl(\lambda)} \in \BB(\lambda)_{\cl}.
\end{equation}
Therefore, by Theorem~\ref{thm:pc01}\,(1), 
$\eta=
 e_{k_{1}}^{c_{1}}e_{k_{2}}^{c_{2}} \cdots 
 e_{k_{q}}^{c_{q}}\eta_{\cl(\lambda)} \in \BB(\lambda)_{\cl}$ 
for some $c_{1},\,c_{2},\,\dots,\,c_{q} \in \BZ_{\ge 0}$. 
Thus we have shown the inclusion $\BB \subset \BB(\lambda)_{\cl}$. Also, 
we should remark that $\eta_{\cl(\lambda)} \in \BB$ 
by \eqref{eq:charls01} and condition (a). 

Next, let us show the opposite inclusion $\BB \supset \BB(\lambda)_{\cl}$. 
Fix an element $\eta' \in \BB(\lambda)_{\cl}$ arbitrarily. 
By Remark~\ref{rem:extremal}, 
there exists $j_{1},\,j_{2},\,\dots,\,j_{p} \in I$ such that 
\begin{equation*}
e_{j_{p}}^{\max} \cdots 
e_{j_{2}}^{\max}e_{j_{1}}^{\max}\eta'=
\eta_{\cl(\lambda)}.
\end{equation*}
Therefore, by Theorem~\ref{thm:pc01}\,(1), 
$\eta'=
 f_{j_{1}}^{d_{1}}f_{j_{2}}^{d_{2}} \cdots 
 f_{j_{p}}^{d_{p}}\eta_{\cl(\lambda)}$
for some $d_{1},\,d_{2},\,\dots,\,d_{p} \in \BZ_{\ge 0}$. 
Since $\eta_{\cl(\lambda)} \in \BB$ as shown above, 
it follows from condition (a) that $\eta' \in \BB$. 
Thus we have shown the inclusion $\BB \supset \BB(\lambda)_{\cl}$, 
thereby completing the proof of the theorem. 
\end{proof}

%
\section{Quantum Lakshmibai-Seshadri paths.}
\label{sec:QLS}

%
\subsection{Quantum Bruhat graph.}
\label{subsec:def-QBG}

In this subsection, we fix a subset $J$ of $I_{0}$. Set
\begin{equation*}
W_{J}:=\langle r_{j} \mid j \in J \rangle \subset W_{0}.
\end{equation*}
It is well-known that each coset in $W_{0}/W_{J}$ has a unique element of 
minimal length, called the minimal coset representative for the coset; 
we denote by $W_{0}^{J} \subset W_{0}$ 
the set of minimal coset representatives for the cosets in 
$W_{0}/W_{J}$, and by $\mcr{\,\cdot\,}=\mcr{\,\cdot\,}_{J}:
W_{0} \twoheadrightarrow W_{0}^{J} \cong W_{0}/W_{J}$
the canonical projection. 
Also, we set $\Delta_{J}:=\Delta_{0} \cap 
\bigl(\bigoplus_{j \in J} \BZ \alpha_{j}\bigr)$, 
$\Delta_{J}^{\pm}:=
\Delta_{0}^{\pm} \cap \bigl(\bigoplus_{j \in J} \BZ \alpha_{j}\bigr)$, 
and 
$\rho:=(1/2)\sum_{\alpha \in \Delta_{0}^{+}}\alpha$, 
$\rho_{J}:=(1/2)\sum_{\alpha \in \Delta_{J}^{+}} \alpha$. 
%
%
\begin{dfn} \label{dfn:QBG}
The (parabolic) quantum Bruhat graph is 
a $(\Delta_{0}^{+} \setminus \Delta_{J}^{+})$-labeled, 
directed graph with vertex set $W_{0}^{J}$ 
and $(\Delta_{0}^{+} \setminus \Delta_{J}^{+})$-labeled, directed edges of 
the following form: $\mcr{wr_{\beta}} \stackrel{\beta}{\leftarrow} w$ 
for $w \in W_{0}^{J}$ and 
$\beta \in \Delta_{0}^{+} \setminus \Delta_{J}^{+}$ such that either
\begin{enumerate}
\renewcommand{\labelenumi}{(\roman{enumi})}

\item $\ell(\mcr{wr_{\beta}})=\ell(w)+1$, or 

\item $\ell(\mcr{wr_{\beta}})=\ell(w)-2\pair{\rho-\rho_{J}}{\beta^{\vee}}+1$;
\end{enumerate}
if (i) holds (resp., (ii) holds), then the edge is called a Bruhat edge 
(resp., a quantum edge). 
\end{dfn}

%
\begin{rem} \label{rem:QBG1}
If $w \in W_{0}^{J}$ and 
$\beta \in \Delta_{0}^{+} \setminus \Delta_{J}^{+}$ satisfy 
the condition that $\ell(\mcr{wr_{\beta}})=\ell(w)+1$, 
then $wr_{\beta} \in W_{0}^{J}$. Indeed, 
since $\ell(wr_{\beta}) \ge \ell(\mcr{wr_{\beta}})=\ell(w)+1$, 
it follows that $wr_{\beta}$ is greater than $w$ in the ordinary Bruhat order. 
Therefore, by \cite[Proposition 2.5.1]{BB}, $\mcr{wr_{\beta}}$ 
is greater than or equal to $\mcr{w}=w$ in the ordinary Bruhat order. 
Since $\ell(\mcr{wr_{\beta}})=\ell(w)+1$ by the assumption, 
there exists $\gamma \in \Delta_{0}^{+}$ such that 
$\mcr{wr_{\beta}}=wr_{\gamma}$. 
Now, we take a dominant integral weight $\Lambda \in P_{\cl}$ 
with respect to the finite root system $\Delta_{0}$ such that 
$\bigl\{j \in I_{0} \mid \pair{\Lambda}{\alpha_{j}^{\vee}}=0\bigr\}=J$; 
note that $\pair{\Lambda}{\beta^{\vee}} > 0$ since 
$\beta \in \Delta_{0}^{+} \setminus \Delta_{J}^{+}$. 
Then we have
$wr_{\beta}\Lambda=\mcr{wr_{\beta}}\Lambda=wr_{\gamma}\Lambda$, 
and hence $r_{\beta}\Lambda=r_{\gamma}\Lambda$. 
It follows that $\pair{\Lambda}{\beta^{\vee}}\beta=
\pair{\Lambda}{\gamma^{\vee}}\gamma$. 
Since $\beta$ and $\gamma$ are both contained in $\Delta_{0}^{+}$, and 
since $\pair{\Lambda}{\beta^{\vee}} > 0$, 
we deduce that $\beta=\gamma$. 
Thus, we obtain $\mcr{wr_{\beta}}=wr_{\gamma}=wr_{\beta}$, 
which implies that $wr_{\beta} \in W_{0}^{J}$. 
%
\end{rem}

%
%
\begin{rem} \label{rem:QBG}
We know from \cite[Lemma 10.18]{LS} that 
the condition (ii) above is equivalent to the following condition : 
\begin{enumerate}
\renewcommand{\labelenumi}{(\roman{enumi})}
\setcounter{enumi}{2}

\item $\ell(\mcr{wr_{\beta}})=
\ell(w)-2\pair{\rho-\rho_{J}}{\beta^{\vee}}+1$ and 
$\ell(wr_{\beta})=\ell(w)-2\pair{\rho}{\beta^{\vee}}+1$. 

\end{enumerate}
%
%
%
%
%
%
\end{rem}

Let $x,\,y \in W_{0}^{J}$. A directed path $\bd$
from $y$ to $x$ in the parabolic quantum Bruhat graph is, 
by definition, a pair of a sequence $w_{0},\,w_{1},\,\dots,\,w_{n}$ 
of elements in $W_{0}^{J}$ and a sequence 
$\beta_{1},\,\beta_{2},\,\dots,\,\beta_{n}$ of 
elements in $\Delta_{0}^{+} \setminus \Delta_{J}^{+}$ 
such that
%
%
\begin{equation} \label{eq:dp}
\bd : 
x=
 w_{0} \stackrel{\beta_{1}}{\leftarrow}
 w_{1} \stackrel{\beta_{2}}{\leftarrow} \cdots 
       \stackrel{\beta_{n}}{\leftarrow}
 w_{n}=y. 
\end{equation}
A directed path $\bd$ from $y$ to $x$ said to be 
shortest if its length $n$ is minimal among 
all possible directed paths from $y$ to $x$. Denote by 
$\len{x}{y}$ the length of a shortest directed path 
from $y$ to $x$ in the parabolic quantum Bruhat graph.
%
%
\subsection{Definition of quantum Lakshmibai-Seshadri paths.}
\label{subsec:QLS}

In this subsection, 
we fix a level-zero dominant integral weight
$\lambda \in \sum_{i \in I_{0}} \BZ_{\ge 0} \vpi_{i}$, and 
set $\Lambda:=\cl(\lambda)$ for simplicity of notation. 
Also, we set 
\begin{equation*}
J:=\bigl\{j \in I_{0} \mid \pair{\Lambda}{\alpha_{j}^{\vee}}=0 \bigr\} 
 \subset I_{0}.
\end{equation*}
%
%
\begin{dfn} \label{dfn:QBG-achain}
Let $x,\,y \in W_{0}^{J}$, and let $\sigma \in \BQ$ 
be such that $0 < \sigma < 1$. A directed $\sigma$-path 
from $y$ to $x$ is, by definition, a directed path 
\begin{equation*}
x=w_{0} \stackrel{\beta_{1}}{\leftarrow} w_{1}
\stackrel{\beta_{2}}{\leftarrow} w_{2} 
\stackrel{\beta_{3}}{\leftarrow} \cdots 
\stackrel{\beta_{n}}{\leftarrow} w_{n}=y
\end{equation*}
from $y$ to $x$ in the parabolic quantum Bruhat graph 
satisfying the condition that 
\begin{equation*}
\sigma \pair{\Lambda}{\beta_{k}^{\vee}} \in \BZ 
\quad \text{for all $1 \le k \le n$}.
\end{equation*}
\end{dfn}
%
%
\begin{dfn} \label{dfn:qLS}
Denote by $\ti{\BB}(\lambda)_{\cl}$ (resp., $\ha{\BB}(\lambda)_{\cl}$) 
the set of all pairs $\eta=(\ud{x}\,;\,\ud{\sigma})$ of 
a sequence $\ud{x}\,:\,x_{1},\,x_{2},\,\dots,\,x_{s}$ of 
elements in $W_{0}^{J}$, with $x_{k} \ne x_{k+1}$ 
for $1 \le k \le s-1$, and a sequence 
$\ud{\sigma}\,:\,
 0=\sigma_{0} < \sigma_{1} < \cdots < \sigma_{s}=1$ of rational numbers
 satisfying the condition that 
there exists a directed $\sigma_{k}$-path 
(resp., a directed $\sigma_{k}$-path of length $\len{x_{k}}{x_{k+1}}$) 
from $x_{k+1}$ to $x_{k}$ for each $1 \le k \le s-1$; 
observe that $\ha{\BB}(\lambda)_{\cl} \subset 
\ti{\BB}(\lambda)_{\cl}$. We call an element of 
$\ti{\BB}(\lambda)_{\cl}$ a quantum Lakshmibai-Seshadri path 
of shape $\lambda$. 
\end{dfn}

Let $\eta=(x_{1},\,x_{2},\,\dots,\,x_{s}\,;\,
\sigma_{0},\,\sigma_{1},\,\dots,\,\sigma_{s})$ 
be a rational path, that is, 
a pair of a sequence $x_{1},\,x_{2},\,\dots,\,x_{s}$ 
of elements in $W_{0}^{J}$, with $x_{k} \ne x_{k+1}$ 
for $1 \le k \le s-1$, and a sequence 
$0=\sigma_{0} < \sigma_{1} < \cdots < \sigma_{s}=1$ of 
rational numbers. 
We identify $\eta$ with the following piecewise-linear, 
continuous map 
$\eta:[0,1] \rightarrow \BR \otimes_{\BZ} P_{\cl}$ 
(cf. \eqref{eq:path}): 
%
%
\begin{equation} \label{eq:QBG-path}
\eta(t)=\sum_{l=1}^{k-1}
(\sigma_{l}-\sigma_{l-1})x_{l}\Lambda+
(t-\sigma_{k-1})x_{k}\Lambda \quad 
\text{for $\sigma_{k-1} \le t \le \sigma_{k}$, $1 \le k \le s$};
\end{equation}
note that the map $W_{0}^{J} \rightarrow W_{0}\Lambda$, 
$w \mapsto w\Lambda$, is bijective. 
We will prove that under this identification, 
both $\ti{\BB}(\lambda)_{\cl}$ and $\ha{\BB}(\lambda)_{\cl}$ 
can be regarded as a subset of $\BP_{\cl,\,\INT}$ 
(see Proposition~\ref{prop:ip}). Furthermore, we will prove that 
both of the sets $\ti{\BB}(\lambda)_{\cl} \cup \{\bzero\}$ and 
$\ha{\BB}(\lambda)_{\cl} \cup \{\bzero\}$ are stable under the action of 
root operators (see Proposition~\ref{prop:stable}).
%
%
\section{Main result.}
\label{sec:main}
%
%
\subsection{Statement and some technical lemmas.}
\label{subsec:prf-main1}
Throughout this section, 
we fix a level-zero dominant integral weight
$\lambda \in \sum_{i \in I_{0}} \BZ_{\ge 0} \vpi_{i}$.
Set $\Lambda:=\cl(\lambda)$, and 
\begin{equation*}
J:=\bigl\{j \in I_{0} \mid \pair{\Lambda}{\alpha_{j}^{\vee}}=0 \bigr\} 
 \subset I_{0}.
\end{equation*}
The following theorem is the main result of this paper;
it is obtained as a by-product of an explicit description, 
given in \S\ref{subsec:ro}, of the image of a quantum LS path 
as a rational path under the action of root operators 
on quantum LS paths. 
%
%
\begin{thm} \label{thm:main}
With the notation and setting above, we have
\begin{equation*}
\ti{\BB}(\lambda)_{\cl}=
\ha{\BB}(\lambda)_{\cl}=\BB(\lambda)_{\cl}.
\end{equation*}
\end{thm}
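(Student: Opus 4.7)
The plan is to apply the characterization Theorem~\ref{thm:charls} to each of the sets $\ti{\BB}(\lambda)_{\cl}$ and $\ha{\BB}(\lambda)_{\cl}$, thereby identifying both with $\BB(\lambda)_{\cl}$ in one stroke. Since the inclusion $\ha{\BB}(\lambda)_{\cl} \subset \ti{\BB}(\lambda)_{\cl}$ holds by definition, this yields the full chain of equalities in the theorem.

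To apply Theorem~\ref{thm:charls}, I must verify its two hypotheses for each set. Condition~(b) is built into the definition of a (restricted) quantum LS path: the identification \eqref{eq:QBG-path} expresses $\eta$ precisely in the form \eqref{eq:charls} with $\mu_l = x_l \Lambda$. Since $x_l \in W_0^J$ and the isotropy of $\Lambda$ in $W_0$ is exactly $W_J$, one has $\mu_l \in W_0 \Lambda = \cl(W\lambda)$, as required. The remaining task is to establish the forthcoming Propositions~\ref{prop:ip} and \ref{prop:stable}, which assert, respectively, that $\ti{\BB}(\lambda)_{\cl}$ and $\ha{\BB}(\lambda)_{\cl}$ sit inside $\BP_{\cl,\,\INT}$ and that both sets (together with $\bzero$) are stable under the root operators $f_j$ for $j \in I$; this is condition~(a). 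The set $\ha{\BB}(\lambda)_{\cl}$ is trivially nonempty (it contains the one-step datum $(e\,;\,0,1)$ where $e$ is the identity of $W_0^J$), so that the hypotheses of Theorem~\ref{thm:charls} really do apply to both sets, yielding the desired identities.

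The main obstacle is Proposition~\ref{prop:stable}, which demands an explicit formula for the action of each root operator on a quantum LS path viewed as a rational path. The delicate point is that applying $f_j$ to $\eta=(x_1,\ldots,x_s\,;\,\sigma_0,\ldots,\sigma_s)$ reflects part of the vertex sequence by $r_j$ (for $j \in I_0$) or by $r_{\theta}$ (for $j=0$, using Remark~\ref{rem:ro_f0} since $\eta$ takes values in the level-zero hyperplane). One must check that after reflecting, the resulting sequence of minimal coset representatives together with the refined partition of $[0,1]$ still satisfies the $\sigma_k$-chain condition of Definition~\ref{dfn:QBG-achain}, that is, that consecutive vertices are joined by edges of the parabolic QBG with compatible integrality constraints, and, for $\ha{\BB}(\lambda)_{\cl}$, that the new directed paths remain shortest. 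This is precisely where the Diamond Lemmas for the parabolic QBG from \cite{LNSSS1} enter, providing the mechanism to replace a directed edge $y \to \mcr{yr_\beta}$ inside a directed path by a compatible shortest sub-path. I would carry this out by first pinning down the break points $t_0$ and $t_1$ relative to the $\sigma_k$, analyzing separately the restrictions of $\eta$ to $[0,t_0]$, $[t_0,t_1]$, and $[t_1,1]$, and then splicing the three pieces back together so as to preserve the strict monotonicity of the vertex sequence in $W_0^J$ and the minimality of the local directed paths.
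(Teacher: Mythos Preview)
Your proposal is correct and follows essentially the same approach as the paper: reduce Theorem~\ref{thm:main} to the characterization Theorem~\ref{thm:charls} by verifying its hypotheses via Propositions~\ref{prop:ip} and \ref{prop:stable}, with the latter established through an explicit description of $f_j\eta$ as a rational path, using the Diamond Lemmas from \cite{LNSSS1} to control the reflected segments and the transition edges at $t_0$ and $t_1$. Your remark on nonemptiness is a nice extra check, and your outline of the case analysis (splitting at $t_0$, $t_1$ relative to the $\sigma_k$) matches the paper's four-case treatment in the proof of Proposition~\ref{prop:stable}.
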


In view of Theorem~\ref{thm:charls}, 
in order to prove Theorem~\ref{thm:main}, 
it suffices to prove that both $\ti{\BB}(\lambda)_{\cl}$ 
and $\ha{\BB}(\lambda)_{\cl}$ are contained in $\BP_{\cl,\,\INT}$ 
(see Proposition~\ref{prop:ip} below), 
and that both of the sets 
$\ti{\BB}(\lambda)_{\cl} \cup \{\bzero\}$ and 
$\ha{\BB}(\lambda)_{\cl} \cup \{\bzero\}$ are stable 
under the action of the root operators $f_{j}$ for all $j \in I$ 
(see Proposition~\ref{prop:stable} below). 
To prove these, we need some lemmas. 

%
\begin{lem}[{\cite[Proposition 5.11]{LNSSS1}}] \label{lem:theta}
Let $w \in W_{0}^{J}$. 
If $w^{-1}\theta \in \Delta_{0}^{-}$, then there exists 
a quantum edge $\mcr{r_{\theta}w} \stackrel{-w^{-1}\theta}{\longleftarrow} 
w$ from $w$ to $\mcr{r_{\theta}w}$ in the parabolic quantum Bruhat graph. 
\end{lem}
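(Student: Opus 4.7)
The plan is to introduce $\beta := -w^{-1}\theta$; the hypothesis is exactly $\beta \in \Delta_0^+$. Since $r_\beta = r_{w^{-1}\theta} = w^{-1} r_\theta w$, we have $w r_\beta = r_\theta w$, and hence $\mcr{w r_\beta} = \mcr{r_\theta w}$, so the target vertex is correct. To check $\beta \in \Delta_0^+ \setminus \Delta_J^+$, I would use the standard characterization $w(\Delta_J^-) \subset \Delta_0^-$ of minimal coset representatives: if $\beta \in \Delta_J^+$, then $\theta = w(-\beta) \in \Delta_0^-$, contradicting the positivity of $\theta$.

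The main task is the quantum-edge length condition from Definition~\ref{dfn:QBG}(ii). By Remark~\ref{rem:QBG}, this splits into the two identities
\[
\ell(r_\theta w) = \ell(w) - 2\pair{\rho}{\beta^\vee} + 1
\qquad\text{and}\qquad
\ell(\mcr{r_\theta w}) = \ell(w) - 2\pair{\rho - \rho_J}{\beta^\vee} + 1.
\]
The key structural input is the highest-root property: for $\gamma \in \Delta_0^+$, the $\theta$-string argument (using that $\gamma + \theta$ is never a root) gives $\pair{\gamma}{\theta^\vee} \in \{0,1,2\}$, with value $2$ attained only at $\gamma = \theta$. From this and the reflection formula one obtains the clean description $N(r_\theta) = \{\gamma \in \Delta_0^+ : \pair{\gamma}{\theta^\vee} > 0\}$.

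For the first identity, I would express $|N(r_\theta w)|$ in terms of $|N(w)|$ by a direct inversion-set computation, combining the description of $N(r_\theta)$ with the classical formula $\rho - w\rho = \sum_{\gamma \in N(w^{-1})} \gamma$ and the hypothesis $\theta \in N(w^{-1})$; the nonnegativity of $\pair{\gamma}{\theta^\vee}$ on $\Delta_0^+$ makes the count collapse cleanly to $\ell(w) - \ell(r_\theta w) = 2\pair{\rho}{\beta^\vee} - 1$. For the second identity, factor $r_\theta w = \mcr{r_\theta w} \cdot z$ with $z \in W_J$, so that $\ell(z) = |N(r_\theta w) \cap \Delta_J^+|$. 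Using $w(\Delta_J^+) \subset \Delta_0^+$ and observing that no $\alpha \in \Delta_J^+$ satisfies $w\alpha = \theta$ (else $w^{-1}\theta$ would lie in $\Delta_J^+$ rather than $\Delta_0^-$), this simplifies to $\ell(z) = \sum_{\alpha \in \Delta_J^+} \pair{w\alpha}{\theta^\vee} = 2\pair{w\rho_J}{\theta^\vee} = -2\pair{\rho_J}{\beta^\vee}$; subtracting from the first identity then yields the second. The main obstacle is the inversion-set bookkeeping in the non-parabolic case, which hinges entirely on the crucial nonnegativity of $\pair{\gamma}{\theta^\vee}$ on $\Delta_0^+$ afforded by $\theta$ being the highest root; once this is in hand, the rest is a clean count.
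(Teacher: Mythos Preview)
The paper does not give its own proof of this lemma: it is quoted verbatim from \cite[Proposition~5.11]{LNSSS1} and used as a black box. So there is no in-paper argument to compare against.

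That said, your proposal is a correct self-contained proof, and it is essentially the standard one. The reduction to $\beta=-w^{-1}\theta\in\Delta_0^+\setminus\Delta_J^+$ is right (your use of $w(\Delta_J^+)\subset\Delta_0^+$ is exactly what is needed), and the two length identities do follow from the inversion-set count you outline. The crucial facts are precisely the ones you isolate: for $\gamma\in\Delta_0^+$ one has $\pair{\gamma}{\theta^\vee}\in\{0,1\}$ unless $\gamma=\theta$, hence $N(r_\theta)=\{\gamma>0:\pair{\gamma}{\theta^\vee}>0\}$; and for $\alpha\in\Delta_J^+$ one has $w\alpha>0$ with $w\alpha\neq\theta$, so $\pair{w\alpha}{\theta^\vee}\in\{0,1\}$. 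With these, the count
\[
\ell(r_\theta w)-\ell(w)=\sum_{\alpha>0}\pair{w\alpha}{\theta^\vee}+1=2\pair{w\rho}{\theta^\vee}+1=-2\pair{\rho}{\beta^\vee}+1
\]
and the identification $\ell(z)=|N(r_\theta w)\cap\Delta_J^+|=2\pair{w\rho_J}{\theta^\vee}=-2\pair{\rho_J}{\beta^\vee}$ go through exactly as you describe, yielding condition (iii) of Remark~\ref{rem:QBG}. One small point: you might make explicit that $\ell(z)=|N(r_\theta w)\cap\Delta_J^+|$ follows from the factorization $r_\theta w=\mcr{r_\theta w}\,z$ together with $\mcr{r_\theta w}\,\Delta_J^+\subset\Delta_0^+$; this is routine but worth a sentence.
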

%
%
\begin{lem}[{\cite[Proposition 5.10\,(1) and (3)]{LNSSS1}}] \label{lem:mcr}
Let $w \in W_{0}^{J}$ and $j \in I_{0}$. 
If $w^{-1}\alpha_{j} \in \Delta_{0} \setminus \Delta_{J}$, 
then $r_{j}w \in W_{0}^{J}$. 
\end{lem}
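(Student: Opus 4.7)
The plan is to invoke the standard characterization that an element $u \in W_{0}$ lies in $W_{0}^{J}$ if and only if $u\alpha_{i} \in \Delta_{0}^{+}$ for every $i \in J$ (equivalently, $\ell(ur_{i}) = \ell(u)+1$ for all $i \in J$), and then to verify this positivity condition for $u = r_{j}w$.

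Concretely, I would fix an arbitrary $i \in J$ and analyze the root $(r_{j}w)\alpha_{i} = r_{j}(w\alpha_{i})$. Since $w \in W_{0}^{J}$ by hypothesis, the characterization above gives $w\alpha_{i} \in \Delta_{0}^{+}$. Next I would use the well-known fact that the simple reflection $r_{j}$ permutes $\Delta_{0}^{+} \setminus \{\alpha_{j}\}$ and sends $\alpha_{j}$ to $-\alpha_{j}$; accordingly, $r_{j}(w\alpha_{i})$ automatically lies in $\Delta_{0}^{+}$ as soon as $w\alpha_{i} \neq \alpha_{j}$.

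To rule out the equality $w\alpha_{i} = \alpha_{j}$, I would argue by contradiction. Were this equality to hold for some $i \in J$, then $w^{-1}\alpha_{j} = \alpha_{i} \in \Delta_{J}^{+} \subset \Delta_{J}$, which contradicts the hypothesis $w^{-1}\alpha_{j} \in \Delta_{0} \setminus \Delta_{J}$. Hence $w\alpha_{i} \neq \alpha_{j}$ for every $i \in J$, so $(r_{j}w)\alpha_{i} \in \Delta_{0}^{+}$ for every $i \in J$; invoking the characterization recalled at the outset, this yields $r_{j}w \in W_{0}^{J}$.

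The argument is elementary once the positivity characterization of $W_{0}^{J}$ is in hand, so I would not expect a substantial obstacle. The role of the hypothesis $w^{-1}\alpha_{j} \in \Delta_{0} \setminus \Delta_{J}$ is precisely to exclude the single bad case in which $w^{-1}\alpha_{j}$ would coincide with a simple root of the parabolic subsystem $\Delta_{J}$, a case that would otherwise force $r_{j}w$ to fall outside $W_{0}^{J}$.
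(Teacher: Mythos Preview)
Your argument is correct. The paper itself does not supply a proof of this lemma; it simply cites \cite[Proposition~5.10\,(1) and (3)]{LNSSS1}, where the statement is established as part of a more detailed analysis of edges in the parabolic quantum Bruhat graph. Your route is more direct and entirely self-contained: you bypass the quantum Bruhat graph machinery and instead use only the standard positivity characterization of $W_{0}^{J}$ together with the elementary fact that $r_{j}$ permutes $\Delta_{0}^{+} \setminus \{\alpha_{j}\}$. This is exactly the right level of argument for a fact of this nature, and it makes transparent why the hypothesis $w^{-1}\alpha_{j} \notin \Delta_{J}$ is needed.
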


%
\begin{lem} \label{lem:dist1}
Let $w \in W_{0}^{J}$ and $\beta \in \Delta_{0}^{+} \setminus \Delta_{J}^{+}$ 
be such that $\mcr{wr_{\beta}} \stackrel{\beta}{\leftarrow} w$. 
Let $j \in I_{0}$. 
\begin{enu}
\item If $\pair{w\Lambda}{\alpha_{j}^{\vee}} > 0$ and 
$w\beta \ne \pm \alpha_{j}$, then 
$\pair{wr_{\beta}\Lambda}{\alpha_{j}^{\vee}} > 0$. Also, 
both $r_{j}\mcr{wr_{\beta}}$ and $r_{j}w$ are 
contained in $W_{0}^{J}$, and 
$r_{j}\mcr{wr_{\beta}} \stackrel{\beta}{\leftarrow} r_{j}w$. 

\item If $\pair{wr_{\beta}\Lambda}{\alpha_{j}^{\vee}} < 0$ and 
$w\beta \ne \pm \alpha_{j}$, then 
$\pair{w\Lambda}{\alpha_{j}^{\vee}} < 0$. Also, 
both $r_{j}\mcr{wr_{\beta}}$ and $r_{j}w$ are 
contained in $W_{0}^{J}$, and 
$r_{j}\mcr{wr_{\beta}} \stackrel{\beta}{\leftarrow} r_{j}w$. 

\item If $\pair{wr_{\beta}\Lambda}{\alpha_{j}^{\vee}} < 0$ and 
$\pair{w\Lambda}{\alpha_{j}^{\vee}} \ge 0$, 
then $w\beta = \pm \alpha_{j}$. 

\item If $\pair{wr_{\beta}\Lambda}{\alpha_{j}^{\vee}} \le 0$ and 
$\pair{w\Lambda}{\alpha_{j}^{\vee}} > 0$, 
then $w\beta = \pm \alpha_{j}$. 
\end{enu}
\end{lem}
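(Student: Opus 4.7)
My plan is to derive everything from the key identity
\[
\pair{wr_\beta\Lambda}{\alpha_j^\vee} \;=\; \pair{w\Lambda}{\alpha_j^\vee} \;-\; \pair{\Lambda}{\beta^\vee}\pair{w\beta}{\alpha_j^\vee},
\]
which follows because $\Lambda$ is $W_J$-invariant (so $\mcr{wr_\beta}\Lambda = wr_\beta\Lambda$), with $\pair{\Lambda}{\beta^\vee}>0$ since $\beta\in\Delta_0^+\setminus\Delta_J^+$. I would prove the structural claims (3) and (4) first, and then deduce (1) and (2) as contrapositives combined with Lemma~\ref{lem:mcr}.

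For (3) and (4), the sign hypotheses on the two pairings in the identity force $\pair{w\beta}{\alpha_j^\vee}>0$ in both cases. I would then invoke the Bruhat-versus-quantum dichotomy of Definition~\ref{dfn:QBG}: Bruhat edges satisfy $w\beta\in\Delta_0^+$ (since $\ell(wr_\beta)=\ell(w)+1$ by Remark~\ref{rem:QBG1}) while quantum edges satisfy $w\beta\in\Delta_0^-$ (since $\ell(wr_\beta)<\ell(w)$ by Remark~\ref{rem:QBG}). Since $j\in I_0$ and $\pair{-\alpha_j}{\alpha_j^\vee}=-2$, the conclusion $w\beta=-\alpha_j$ is incompatible with $\pair{w\beta}{\alpha_j^\vee}>0$, so the quantum case must be ruled out by the sign hypothesis itself, and in the Bruhat case one is left to show that $w\beta\in\Delta_0^+$ with $\pair{w\beta}{\alpha_j^\vee}>0$ forces $w\beta=\alpha_j$. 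This refinement step uses a rank-two reduction in the root subsystem generated by $w\beta$ and $\alpha_j$, coupled with the minimal-representative structure of the pair $w,\mcr{wr_\beta}\in W_0^J$, to rule out any non-simple positive root lying in the $\alpha_j^\vee$-positive half-space.

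For (1), assume $w\beta\ne\pm\alpha_j$ and $\pair{w\Lambda}{\alpha_j^\vee}>0$; the contrapositive of (4) immediately yields $\pair{wr_\beta\Lambda}{\alpha_j^\vee}>0$. Both positivity conditions translate, by $W_J$-invariance of $\Lambda$, into $w^{-1}\alpha_j,\;\mcr{wr_\beta}^{-1}\alpha_j\notin\Delta_J$, so Lemma~\ref{lem:mcr} gives $r_jw,\;r_j\mcr{wr_\beta}\in W_0^J$. The new edge $r_j\mcr{wr_\beta}\stackrel{\beta}{\leftarrow}r_jw$ then follows from $(r_jw)r_\beta=r_j(wr_\beta)$ and $\mcr{r_jwr_\beta}=r_j\mcr{wr_\beta}$, together with the observation that left-multiplication by $r_j$ shifts $\ell(w)$ and $\ell(\mcr{wr_\beta})$ by the same $\pm 1$ (the signs of $w^{-1}\alpha_j$ and $\mcr{wr_\beta}^{-1}\alpha_j$ agreeing under our positive-pairing hypotheses), so the Bruhat or quantum length relation of the original edge transfers. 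Part (2) is symmetric, starting from the hypothesis on $\pair{wr_\beta\Lambda}{\alpha_j^\vee}$ and invoking the contrapositive of (3).

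The principal obstacle is the refinement step in (3) and (4) that sharpens $\pair{w\beta}{\alpha_j^\vee}>0$ to $w\beta=\pm\alpha_j$: the identity alone is too weak, and one must genuinely exploit the fine structure of the parabolic QBG—in particular the very restrictive nature of quantum edges (maximal length drop $\ell(wr_\beta)=\ell(w)-\ell(r_\beta)$)—likely via a rank-two reduction intertwined with Lemma~\ref{lem:theta}. I expect the bulk of the technical work to lie there; the remainder is bookkeeping with lengths and cosets.
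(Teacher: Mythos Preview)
Your strategy has the logic inverted relative to the paper, and the step you identify as the ``principal obstacle'' is a genuine gap that cannot be filled in the way you sketch.

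The paper proves (1) and (2) \emph{first}, by invoking the Diamond Lemmas \cite[Lemma~5.14]{LNSSS1}: from the Bruhat edge $r_jw\leftarrow w$ (resp.\ $\mcr{wr_\beta}\leftarrow r_j\mcr{wr_\beta}$) and the given edge $\mcr{wr_\beta}\leftarrow w$, those lemmas produce the commuting square that yields both the new edge $r_j\mcr{wr_\beta}\leftarrow r_jw$ and the Bruhat edge on the opposite side, from which the positivity (resp.\ negativity) of the remaining pairing is read off. Parts (3) and (4) then drop out as one-line contrapositives. You attempt the reverse order, but (3) and (4) are no easier to prove directly than the inequality halves of (1) and (2)---they are logically equivalent---so you are simply relocating the difficulty rather than dissolving it.

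More seriously, your direct argument for (3) and (4) does not work. From the identity you correctly extract $\pair{w\beta}{\alpha_j^\vee}>0$, but this is far weaker than $w\beta=\pm\alpha_j$: already in type $A_2$ one has $\pair{\alpha_1+\alpha_2}{\alpha_1^\vee}=1>0$ and $\pair{-\alpha_2}{\alpha_1^\vee}=1>0$, so neither the Bruhat sign $w\beta\in\Delta_0^+$ nor the quantum sign $w\beta\in\Delta_0^-$ pins down $w\beta$. Your sentence ``the conclusion $w\beta=-\alpha_j$ is incompatible with $\pair{w\beta}{\alpha_j^\vee}>0$, so the quantum case must be ruled out'' is circular: it assumes the conclusion $w\beta\in\{\pm\alpha_j\}$ that you are trying to establish. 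And in the Bruhat case your ``rank-two reduction'' is just a name for the missing argument; any such reduction would have to exploit the length constraint of the QBG edge, which is precisely what the Diamond Lemmas encode. Your length-bookkeeping for the edge portion of (1) and (2) is fine \emph{once} the inequalities are in hand, but as written your proposal never supplies them. The right fix is to cite \cite[Lemma~5.14]{LNSSS1} for (1) and (2) directly, as the paper does.
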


\begin{proof}
(1) Since $\pair{w\Lambda}{\alpha_{j}^{\vee}} > 0$, 
we see that $w^{-1}\alpha_{j} \in \Delta_{0}^{+} \setminus \Delta_{J}^{+}$. 
By \cite[Proposition~5.10\,(3)]{LNSSS1}, 
there exists a Bruhat edge 
$r_{j}w \stackrel{w^{-1}\alpha_{j}}{\longleftarrow} w$ 
in the parabolic quantum Bruhat graph, with $r_{j}w \in W_{0}^{J}$. 
If the edge $\mcr{wr_{\beta}} \stackrel{\beta}{\leftarrow} w$ is 
a Bruhat (resp., quantum) edge, then it follows from 
the left diagram of (5.3) (resp., (5.4)) in part (1) (resp., part (2)) of 
\cite[Lemma 5.14]{LNSSS1} that 
$r_{j}\mcr{wr_{\beta}}=\mcr{r_{j}wr_{\beta}} \in W_{0}^{J}$, and 
there exists a Bruhat (resp., quantum) edge $r_{j}\mcr{wr_{\beta}}
\stackrel{\beta}{\longleftarrow} r_{j}w$ and a Bruhat edge 
$r_{j}\mcr{wr_{\beta}} 
 \stackrel{\mcr{wr_{\beta}}^{-1}\alpha_{j}}{\longleftarrow} 
 \mcr{wr_{\beta}}$ 
in the parabolic quantum Bruhat graph. 
In particular, we have $\mcr{wr_{\beta}}^{-1}\alpha_{j} \in 
\Delta_{0}^{+} \setminus \Delta_{J}^{+}$, which implies that 
$\pair{wr_{\beta}\Lambda}{\alpha_{j}^{\vee}} > 0$. 
This proves part (1). 

(2) Since $\pair{wr_{\beta}\Lambda}{\alpha_{j}^{\vee}} < 0$, 
we see that $\mcr{wr_{\beta}}^{-1}\alpha_{j} \in 
\Delta_{0}^{-} \setminus \Delta_{J}^{-}$. 
By \cite[Proposition~5.10\,(1)]{LNSSS1}, 
there exists a Bruhat edge 
$\mcr{wr_{\beta}} \stackrel{-\mcr{wr_{\beta}}^{-1}\alpha_{j}}{\longleftarrow} 
r_{j}\mcr{wr_{\beta}}$ in the parabolic quantum Bruhat graph, 
with $r_{j}\mcr{wr_{\beta}} \in W_{0}^{J}$. 
If the edge $\mcr{wr_{\beta}} \stackrel{\beta}{\leftarrow} w$ is 
a Bruhat (resp., quantum) edge, then it follows from 
the right diagram of (5.3) (resp., (5.4)) in part (1) 
(resp., part (2)) of \cite[Lemma 5.14]{LNSSS1} 
that $r_{j}w \in W_{0}^{J}$, and there exists 
a Bruhat (resp., quantum) edge $r_{j}\mcr{wr_{\beta}} 
\stackrel{\beta}{\longleftarrow} r_{j}w$ and a Bruhat edge 
$w \stackrel{-w^{-1}\alpha_{j}}{\longleftarrow} r_{j}w$ 
in the parabolic quantum Bruhat graph. 
In particular, we have $w^{-1}\alpha_{j} \in 
\Delta_{0}^{-} \setminus \Delta_{J}^{-}$, which implies that 
$\pair{w\Lambda}{\alpha_{j}^{\vee}} < 0$. 
This proves part~(2). 

(3) (resp., (4)) 
Assume that $\pair{wr_{\beta}\Lambda}{\alpha_{j}^{\vee}} < 0$ and 
$\pair{w\Lambda}{\alpha_{j}^{\vee}} \ge 0$ 
(resp., $\pair{wr_{\beta}\Lambda}{\alpha_{j}^{\vee}} \le 0$ and 
$\pair{w\Lambda}{\alpha_{j}^{\vee}} > 0$). 
Suppose that $w\beta \ne \pm \alpha_{j}$.
Then it follows from part (2) (resp., (1)) 
that $\pair{w\Lambda}{\alpha_{j}^{\vee}} < 0$ 
(resp., $\pair{wr_{\beta}\Lambda}{\alpha_{j}^{\vee}} > 0$), 
which is a contradiction. Thus we get $w\beta = \pm \alpha_{j}$. 
This completes the proof of Lemma~\ref{lem:dist1}. 
\end{proof}

%
%
%

%
\begin{lem} \label{lem:dist2}
Let $w \in W_{0}^{J}$ and $\beta \in \Delta_{0}^{+} \setminus \Delta_{J}^{+}$ 
be such that $\mcr{wr_{\beta}} \stackrel{\beta}{\leftarrow} w$. 
Let $z \in W_{J}$ be such that $r_{\theta}w=\mcr{r_{\theta}w}z$; 
note that $z\beta \in \Delta_{0}^{+} \setminus \Delta_{J}^{+}$. 
\begin{enu}
\item If $\pair{w\Lambda}{\alpha_{0}^{\vee}} > 0$ and 
$w\beta \ne \pm \theta$, then 
$\pair{wr_{\beta}\Lambda}{\alpha_{0}^{\vee}} > 0$ and 
$\mcr{r_{\theta}wr_{\beta}}
 \stackrel{z\beta}{\leftarrow} \mcr{r_{\theta}w}$. 

\item If $\pair{wr_{\beta}\Lambda}{\alpha_{0}^{\vee}} < 0$ and 
$w\beta \ne \pm \theta$, then 
$\pair{w\Lambda}{\alpha_{0}^{\vee}} < 0$ and $\mcr{r_{\theta}wr_{\beta}}
 \stackrel{z\beta}{\leftarrow} \mcr{r_{\theta}w}$. 

\item If $\pair{wr_{\beta}\Lambda}{\alpha_{0}^{\vee}} < 0$ and 
$\pair{w\Lambda}{\alpha_{0}^{\vee}} \ge 0$, 
then $w\beta = \pm \theta$. 

\item If $\pair{wr_{\beta}\Lambda}{\alpha_{0}^{\vee}} \le 0$ and 
$\pair{w\Lambda}{\alpha_{0}^{\vee}} > 0$, 
then $w\beta = \pm \theta$. 
\end{enu}
\end{lem}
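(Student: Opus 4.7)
The plan is to prove Lemma \ref{lem:dist2} in direct parallel with Lemma \ref{lem:dist1}, replacing \cite[Proposition 5.10]{LNSSS1} by its $\theta$-analog Lemma \ref{lem:theta}, and invoking the Diamond Lemmas \cite[Lemma 5.14]{LNSSS1} in the same configurations. First I would translate everything to $\theta$: by Remark \ref{rem:theta} the hypotheses and conclusions involving $\pair{\,\cdot\,}{\alpha_0^\vee}$ are equivalent to the opposite-sign statements involving $\pair{\,\cdot\,}{\theta^\vee}$, and since $\Lambda$ is dominant for $\Delta_0^+$ with stabilizer $W_J$, these in turn determine whether $w^{-1}\theta$ (resp.\ $\mcr{wr_\beta}^{-1}\theta$) lies in $\Delta_0^-\setminus\Delta_J^-$ or in $\Delta_0^+\setminus\Delta_J^+$.

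For part (1), the assumption $\pair{w\Lambda}{\alpha_0^\vee}>0$ gives $w^{-1}\theta\in\Delta_0^-\setminus\Delta_J^-$, so Lemma \ref{lem:theta} provides a quantum edge $\mcr{r_\theta w}\stackrel{-w^{-1}\theta}{\longleftarrow} w$. The condition $w\beta\ne\pm\theta$ ensures the label $-w^{-1}\theta$ differs from $\beta$, so we have two distinct edges out of $w$, and the Diamond Lemma \cite[Lemma 5.14]{LNSSS1} (the quantum-Bruhat case if the given $\beta$-edge is Bruhat, the quantum-quantum case if it is quantum) completes the square, producing an edge from $\mcr{r_\theta w}$ to $\mcr{r_\theta wr_\beta}$ and an edge from $\mcr{wr_\beta}$ to $\mcr{r_\theta wr_\beta}$. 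The label of the first new edge is identified as $z\beta$ by the direct computation $r_\theta wr_\beta=\mcr{r_\theta w}\,zr_\beta=\mcr{r_\theta w}\,r_{z\beta}\,z$, so the coset projection converts $r_\beta$ into $r_{z\beta}$. The second new edge, read through Lemma \ref{lem:theta} applied at $\mcr{wr_\beta}$, forces $\mcr{wr_\beta}^{-1}\theta\in\Delta_0^-\setminus\Delta_J^-$, which is exactly $\pair{wr_\beta\Lambda}{\alpha_0^\vee}>0$.

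Part (2) is handled by the dual configuration of the same diamond, now anchored at $\mcr{wr_\beta}$: the assumption $\pair{wr_\beta\Lambda}{\alpha_0^\vee}<0$ yields, by the appropriate reformulation of Lemma \ref{lem:theta} obtained by taking $v=r_\theta\mcr{wr_\beta}z'$ for the unique $z'\in W_J$ making $v^{-1}\theta\in\Delta_0^-$, an incoming quantum edge at $\mcr{wr_\beta}$; the Diamond Lemma then closes the square in the reverse orientation, delivering simultaneously the edge $\mcr{r_\theta wr_\beta}\stackrel{z\beta}{\leftarrow}\mcr{r_\theta w}$ and an edge at $w$ that witnesses $\pair{w\Lambda}{\alpha_0^\vee}<0$. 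Parts (3) and (4) follow immediately by contrapositives of (1) and (2): the contrary assumption $w\beta\ne\pm\theta$ would produce a sign that contradicts the hypothesis in each case.

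The principal obstacle is the bookkeeping in part (2): isolating the correct vertex $v$ that supplies the incoming quantum edge at $\mcr{wr_\beta}$, and verifying that the $W_J$-conjugation arising from the parabolic quotient produces exactly the label $z\beta$, with $z$ defined by $r_\theta w=\mcr{r_\theta w}z$, rather than some other $W_J$-conjugate of $\beta$. The cleanest way to enforce this is to observe that $r_\theta w$ and $r_\theta wr_\beta$ project to $\mcr{r_\theta w}$ and $\mcr{r_\theta wr_\beta}$ through the same element $z$ up to a further $W_J$-factor absorbed by the coset projection, so the label of the closing edge is uniformly forced to be $r_{z\beta}$ in both parts, which matches the statement of the lemma.
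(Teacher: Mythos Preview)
Your approach is essentially the same as the paper's: obtain a quantum $\theta$-edge from \cite[Proposition~5.11]{LNSSS1} (the paper's Lemma~\ref{lem:theta}), apply the Diamond Lemma \cite[Lemma~5.14]{LNSSS1} to close the square, and read off both the new edge $\mcr{r_\theta wr_\beta}\stackrel{z\beta}{\leftarrow}\mcr{r_\theta w}$ and the sign of the remaining $\theta$-pairing; parts (3) and (4) then follow by contraposition exactly as you say. The only cosmetic difference is that for part~(2) the paper invokes \cite[Proposition~5.11\,(3)]{LNSSS1} directly to get the incoming quantum edge $\mcr{wr_\beta}\leftarrow\mcr{r_\theta wr_\beta}$, whereas you rederive that edge by applying Lemma~\ref{lem:theta} at the auxiliary vertex $v$; both arguments are equivalent, and the paper is more specific about which diagrams ((5.5)--(5.8), parts~(3) and~(4)) of \cite[Lemma~5.14]{LNSSS1} are being used.
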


\begin{proof}
(1) Since $\pair{w\Lambda}{\alpha_{0}^{\vee}} > 0$, 
we see that $w^{-1}\theta \in \Delta_{0}^{-} \setminus \Delta_{J}^{-}$.
By \cite[Proposition~5.11\,(1)]{LNSSS1}, 
there exists a quantum edge 
$\mcr{r_{\theta}w} \stackrel{-w^{-1}\theta}{\longleftarrow} w$ 
in the parabolic quantum Bruhat graph. 
If the edge $\mcr{wr_{\beta}} \stackrel{\beta}{\leftarrow} w$ is 
a Bruhat (resp., quantum) edge, then it follows from 
the left diagram of (5.5) or (5.6) (resp., (5.7) or (5.8)) 
in part (3) (resp., part (4)) of \cite[Lemma 5.14]{LNSSS1} that 
there exists an edge $\mcr{r_{\theta}wr_{\beta}}
 \stackrel{z\beta}{\leftarrow} \mcr{r_{\theta}w}$ 
and a quantum edge 
$\mcr{r_{\theta}wr_{\beta}} 
 \stackrel{-\mcr{wr_{\beta}}^{-1}\theta}{\longleftarrow} 
 \mcr{wr_{\beta}}$ 
in the parabolic quantum Bruhat graph. 
In particular, we have
$\mcr{wr_{\beta}}^{-1}\theta \in \Delta_{0}^{-} \setminus \Delta_{J}^{-}$, 
which implies that $\pair{wr_{\beta}\Lambda}{\alpha_{0}^{\vee}} > 0$. 
This proves part (1). 

(2) Since $\pair{wr_{\beta}\Lambda}{\alpha_{0}^{\vee}} < 0$, 
we see that $\mcr{wr_{\beta}}^{-1}\theta \in 
\Delta_{0}^{+} \setminus \Delta_{J}^{+}$.
By \cite[Proposition~5.11\,(3)]{LNSSS1}, 
there exists a quantum edge 
$\mcr{wr_{\beta}} \stackrel{z'\mcr{wr_{\beta}}^{-1}\theta}{\longleftarrow} 
 \mcr{r_{\theta}wr_{\beta}}$ 
in the parabolic quantum Bruhat graph, 
where $z' \in W_{J}$ is defined by: 
$r_{\theta}\mcr{wr_{\beta}}=\mcr{r_{\theta}wr_{\beta}}z'$. 
If the edge $\mcr{wr_{\beta}} \stackrel{\beta}{\leftarrow} w$ is 
a Bruhat (resp., quantum) edge, then it follows from 
the right diagram of (5.5) or (5.6) (resp., (5.7) or (5.8)) 
in part (3) (resp., part (4)) of \cite[Lemma 5.14]{LNSSS1} 
that there exists an edge 
$\mcr{r_{\theta}wr_{\beta}}
 \stackrel{z\beta}{\leftarrow} \mcr{r_{\theta}w}$ 
and a quantum edge 
$w \stackrel{zw^{-1}\theta}{\longleftarrow} \mcr{r_{\theta}w}$ 
in the parabolic quantum Bruhat graph. 
In particular, we have 
$w^{-1}\theta \in \Delta_{0}^{+} \setminus \Delta_{J}^{+}$, 
which implies that $\pair{w\Lambda}{\alpha_{0}^{\vee}} < 0$. 
This proves part (2). 

Parts (3) and (4) can be shown by using parts (1) and (2) 
in the same way as parts (3) and (4) of Lemma~\ref{lem:dist1}. 
This completes the proof of Lemma~\ref{lem:dist2}.
\end{proof}
%
%
\begin{lem} \label{lem:sigma0}
Let $\lambda$, $\Lambda$, and $J$ be as above. 
Let $x,\,y \in W_{0}^{J}$, and let $\sigma \in \BQ$ be such that $0 < \sigma < 1$. 
Assume that there exists a directed $\sigma$-path from $y$ to $x$ as follows: 
\begin{equation*}
x=w_{0} \stackrel{\beta_{1}}{\leftarrow} w_{1}
\stackrel{\beta_{2}}{\leftarrow} w_{2} 
\stackrel{\beta_{3}}{\leftarrow} \cdots 
\stackrel{\beta_{n}}{\leftarrow} w_{n}=y.
\end{equation*}
Then, $\sigma(x\Lambda-y\Lambda)$ is contained in
$Q_{0}:=\bigoplus_{j \in I_{0}}\BZ \alpha_{j}$. 
\end{lem}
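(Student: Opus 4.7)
The plan is to prove this by a direct telescoping argument along the $\sigma$-path, exploiting the fact that $\Lambda$ is fixed by the parabolic subgroup $W_J$.

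First I would observe that, by the very definition of $J = \{j \in I_0 \mid \pair{\Lambda}{\alpha_j^\vee} = 0\}$, the weight $\Lambda$ is $W_J$-invariant. Consequently, for each edge $w_{k-1} \stackrel{\beta_k}{\leftarrow} w_k$ in the parabolic QBG we have $w_{k-1} = \mcr{w_k r_{\beta_k}}$, so $w_k r_{\beta_k} = w_{k-1} z_k$ for some $z_k \in W_J$, and therefore
\begin{equation*}
w_{k-1}\Lambda = w_{k-1}z_k\Lambda = w_k r_{\beta_k}\Lambda = w_k\Lambda - \pair{\Lambda}{\beta_k^\vee}\,w_k\beta_k.
\end{equation*}

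Next I would telescope this identity over $k = 1, 2, \dots, n$ to obtain
\begin{equation*}
x\Lambda - y\Lambda = \sum_{k=1}^{n}\bigl(w_{k-1}\Lambda - w_k\Lambda\bigr) = -\sum_{k=1}^{n}\pair{\Lambda}{\beta_k^\vee}\,w_k\beta_k,
\end{equation*}
and then multiply through by $\sigma$:
\begin{equation*}
\sigma(x\Lambda - y\Lambda) = -\sum_{k=1}^{n}\bigl(\sigma\pair{\Lambda}{\beta_k^\vee}\bigr)\,w_k\beta_k.
\end{equation*}

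Finally, I would observe that by the $\sigma$-path condition of Definition~\ref{dfn:QBG-achain} each coefficient $\sigma\pair{\Lambda}{\beta_k^\vee}$ lies in $\BZ$, while each $w_k\beta_k$ lies in $Q_0 = \bigoplus_{j \in I_0}\BZ\alpha_j$, since $\beta_k \in \Delta_0^+ \subset Q_0$ and $W_0$ preserves $Q_0$ (it is generated by the simple reflections $r_j$, $j \in I_0$, which send $Q_0$ to itself). Hence the right-hand side is an integer linear combination of elements of $Q_0$, giving $\sigma(x\Lambda - y\Lambda) \in Q_0$, as desired. There is really no obstacle here: the proof is essentially a one-line telescoping computation, and neither the distinction between Bruhat and quantum edges nor any of the deeper structural lemmas (Lemmas~\ref{lem:theta}--\ref{lem:dist2}) enters the argument.
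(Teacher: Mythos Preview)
Your proof is correct and follows essentially the same telescoping argument as the paper's own proof: both reduce $\sigma(x\Lambda - y\Lambda)$ to $-\sum_{k=1}^{n}\sigma\pair{\Lambda}{\beta_k^\vee}\,w_k\beta_k$ and then invoke the $\sigma$-path integrality condition together with $w_k\beta_k \in Q_0$. The only difference is that you make explicit the step $w_{k-1}\Lambda = w_k r_{\beta_k}\Lambda$ via $W_J$-invariance of $\Lambda$, which the paper leaves implicit.
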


\begin{proof}
We have
\begin{align*}
\sigma (x\Lambda-y\Lambda)
 & = \sum_{k=1}^{n} \sigma (w_{k-1}\Lambda-w_{k}\Lambda)
   = \sum_{k=1}^{n} \sigma (w_{k}r_{\beta_{k}}\Lambda-w_{k}\Lambda) \\[1.5mm]
 & = -\sum_{k=1}^{n} \sigma \pair{\Lambda}{\beta_{k}^{\vee}}\,w_{k}\beta_{k}.
\end{align*}
It follows from the definition of a directed $\sigma$-path that 
$\sigma \pair{\Lambda}{\beta_{k}^{\vee}} \in \BZ$ for all $1 \le k \le n$. 
Also, it is obvious that $w_{k}\beta_{k} \in Q_{0}$ for all 
$1 \le k \le n$. Therefore, we conclude that 
$\sigma (x\Lambda-y\Lambda) \in Q_{0}$. 
This proves the lemma. 
\end{proof}
%
%
\begin{lem} \label{lem:weight}
Let $\lambda$, $\Lambda$, and $J$ be as above. 
If $\eta \in \ti{\BB}(\lambda)_{\cl}$, then 
$\eta(1)$ is contained in $\Lambda+Q_{0}$, 
and hence in $P_{\cl}$. 
\end{lem}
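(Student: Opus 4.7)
The plan is to reduce everything to Lemma~\ref{lem:sigma0} by rewriting $\eta(1)$ as a sum indexed by the consecutive pairs $(x_k, x_{k+1})$, where each pair is connected by a directed $\sigma_k$-path.

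Writing $\eta = (x_1,\dots,x_s;\,\sigma_0,\dots,\sigma_s)$ in the form \eqref{eq:QBG-path}, I would first compute
\[
\eta(1) = \sum_{l=1}^{s}(\sigma_l - \sigma_{l-1})\, x_l\Lambda.
\]
Next I would perform an Abel-style rearrangement: using $\sigma_0=0$ and $\sigma_s=1$, regroup the sum as
\[
\eta(1) = x_s\Lambda \;+\; \sum_{l=1}^{s-1} \sigma_l\, (x_l\Lambda - x_{l+1}\Lambda).
\]
This is the key identity because each summand is now of the form $\sigma_l(x_l\Lambda - x_{l+1}\Lambda)$, precisely the quantity controlled by Lemma~\ref{lem:sigma0}.

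Since $\eta \in \ti{\BB}(\lambda)_{\cl}$, by definition there exists a directed $\sigma_l$-path from $x_{l+1}$ to $x_l$ in the parabolic quantum Bruhat graph for each $l = 1,\dots,s-1$. Lemma~\ref{lem:sigma0} then gives $\sigma_l(x_l\Lambda - x_{l+1}\Lambda) \in Q_0$ for every such $l$, so the entire sum in the displayed identity lies in $Q_0$. It remains to observe that $x_s\Lambda - \Lambda \in Q_0$, which is a standard fact: since $x_s \in W_0$ and $W_0$ is generated by the simple reflections $r_j$ for $j \in I_0$, each step $r_j\mu$ differs from $\mu$ by an integer multiple of $\alpha_j \in Q_0$. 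Combining these two observations yields $\eta(1) \in \Lambda + Q_0$, and since $\Lambda = \cl(\lambda) \in P_{\cl}$ and $Q_0 \subset P_{\cl}$, also $\eta(1) \in P_{\cl}$.

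There is no real obstacle here; the proof is essentially a one-line telescoping combined with a direct invocation of Lemma~\ref{lem:sigma0}. The only thing to be careful about is keeping the Abel summation straight (handling the boundary terms at $l=1$ and $l=s$ correctly, using $\sigma_0=0$ and $\sigma_s=1$) so that the remaining "boundary" contribution is exactly $x_s\Lambda$, which is visibly a $W_0$-translate of $\Lambda$.
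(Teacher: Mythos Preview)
Your proof is correct and follows essentially the same approach as the paper: both rewrite $\eta(1)$ as $x_{s}\Lambda + \sum_{k=1}^{s-1}\sigma_{k}(x_{k}\Lambda - x_{k+1}\Lambda)$, invoke Lemma~\ref{lem:sigma0} for each summand, and note that $x_{s}\Lambda \in \Lambda + Q_{0}$. You simply supply a bit more detail on the Abel summation and on why $x_{s}\Lambda - \Lambda \in Q_{0}$.
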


\begin{proof}
Let $\eta=(x_{1},\,x_{2},\,\dots,\,x_{s}\,;\,
\sigma_{0},\,\sigma_{1},\,\dots,\,\sigma_{s}) 
\in \ti{\BB}(\lambda)_{\cl}$. Then we have 
(see \eqref{eq:QBG-path})
\begin{equation*}
\eta(1)=x_{s}\Lambda+
 \sum_{k=1}^{s-1}\sigma_{k}(x_{k}\Lambda-x_{k+1}\Lambda). 
\end{equation*}
It is obvious that $x_{s}\Lambda \in \Lambda+Q_{0}$. 
Also, it follows from Lemma~\ref{lem:sigma0} that 
$\sigma_{k}(x_{k}\Lambda-x_{k+1}\Lambda) \in Q_{0}$ 
for each $1 \le k \le s-1$. Therefore, we conclude that 
$\eta(1) \in \Lambda+Q_{0}$. This proves the lemma.
\end{proof}

In what follows, we set $s_{j}:=r_{j}$ for $j \in I_{0}$, and 
$s_{0}:=r_{\theta} \in W_{0}$, in order to state our 
results and write their proofs in a way independent of whether 
$j=0$ or not. 
%
%
\begin{lem} \label{lem:sigma1}
Let $\lambda$, $\Lambda$, and $J$ be as above. 
Let $x,\,y \in W_{0}^{J}$, and assume that 
there exists a directed path
%
%
\begin{equation} \label{eq:sigma1-0}
x=w_{0} \stackrel{\beta_{1}}{\leftarrow} w_{1}
\stackrel{\beta_{2}}{\leftarrow} w_{2} 
\stackrel{\beta_{3}}{\leftarrow} \cdots 
\stackrel{\beta_{n}}{\leftarrow} w_{n}=y.
\end{equation}
from $y$ to $x$. Let $j \in I$. 
\begin{enu} 
\item If there exists $1 \le p \le n$ such that 
$\pair{w_{k}\Lambda}{\alpha_{j}^{\vee}} < 0$ for all $0 \le k \le p-1$ and 
$\pair{w_{p}\Lambda}{\alpha_{j}^{\vee}} \ge 0$, then 
$\mcr{s_{j}w_{p-1}}=w_{p}$, and 
there exists a directed path from $y$ to $\mcr{s_{j}x}$ of the form: 
%
%
\begin{equation} \label{eq:sigma1-1}
\mcr{s_{j}x}=
\mcr{s_{j}w_{0}} \stackrel{z_{1}\beta_{1}}{\leftarrow} \cdots 
\stackrel{z_{p-1}\beta_{p-1}}{\leftarrow}
\mcr{s_{j}w_{p-1}}=w_{p} \stackrel{\beta_{p+1}}{\leftarrow} \cdots 
\stackrel{\beta_{n}}{\leftarrow} w_{n}=y. 
\end{equation}
Here, if $j \in I_{0}$, then we define $z_{k} \in W_{J}$ to 
be the identity element for all $1 \le k \le p-1$; 
if $j=0$, then we define $z_{k} \in W_{J}$ by 
$r_{\theta}w_{k}=\mcr{r_{\theta}w_{k}}z_{k}$ 
for each $1 \le k \le p-1$. 

\item If the directed path \eqref{eq:sigma1-0} from $y$ to $x$ is 
shortest, i.e., $\len{x}{y}=n$, then the directed path 
\eqref{eq:sigma1-1} from $y$ to $\mcr{s_{j}x}$ is also shortest, 
i.e., $\len{\mcr{s_{j}x}}{y}=n-1$. 

\item If the directed path \eqref{eq:sigma1-0} is a 
directed $\sigma$-path from $y$ to $x$ 
for some rational number $0 < \sigma < 1$, 
then the directed path \eqref{eq:sigma1-1} is a 
directed $\sigma$-path from $y$ to $\mcr{s_{j}x}$.
\end{enu}
\end{lem}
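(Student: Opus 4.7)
The plan is to process the given directed path in two stages. First, I will analyze the critical step at index $p$, where the sign of $\pair{w_k\Lambda}{\alpha_j^\vee}$ changes, and establish the key identity $\mcr{s_j w_{p-1}} = w_p$. Then I will modify each of the first $p-1$ edges by applying $s_j$ (corrected by an element of $W_J$) to both endpoints, and glue the modified initial segment to the unaltered tail $w_p \stackrel{\beta_{p+1}}{\leftarrow} \cdots \stackrel{\beta_n}{\leftarrow} w_n = y$.

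For the critical step, I apply Lemma \ref{lem:dist1}(3) (when $j \in I_0$) or Lemma \ref{lem:dist2}(3) (when $j = 0$) to the edge $w_{p-1} \stackrel{\beta_p}{\leftarrow} w_p$: since $\pair{w_p\Lambda}{\alpha_j^\vee} \ge 0$ and $\pair{w_{p-1}\Lambda}{\alpha_j^\vee} < 0$, these lemmas force $w_p\beta_p = \pm\alpha_j$ (resp., $\pm\theta$). Hence $w_p r_{\beta_p} = s_j w_p$ in $W_0$, which readily rearranges to $\mcr{s_j w_{p-1}} = w_p$. For each of the earlier edges $w_{k-1} \stackrel{\beta_k}{\leftarrow} w_k$ with $1 \le k \le p-1$, both $\pair{w_{k-1}\Lambda}{\alpha_j^\vee}$ and $\pair{w_k\Lambda}{\alpha_j^\vee}$ are negative by hypothesis. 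I first rule out $w_k\beta_k = \pm\alpha_j$ (resp., $\pm\theta$): otherwise $w_{k-1} = \mcr{s_j w_k}$, and since $W_J$ fixes $\Lambda$ one computes $\pair{w_{k-1}\Lambda}{\alpha_j^\vee} = -\pair{w_k\Lambda}{\alpha_j^\vee} > 0$ (using the level-zero relation $\alpha_0 = -\theta$ and $\pair{\Lambda}{c}=0$ in the case $j=0$), contradicting the sign hypothesis. Hence Lemma \ref{lem:dist1}(2) (resp., Lemma \ref{lem:dist2}(2)) applies and produces the desired edge $\mcr{s_j w_{k-1}} \stackrel{z_k\beta_k}{\leftarrow} \mcr{s_j w_k}$ with the prescribed element $z_k \in W_J$.

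Part (3) is then immediate, since $W_J$ fixes $\Lambda$ gives $\pair{\Lambda}{(z_k\beta_k)^\vee} = \pair{\Lambda}{\beta_k^\vee}$, so the $\sigma$-integrality of each original label transfers verbatim to the modified label. For part (2), the new path has length $(p-1) + (n-p) = n-1$. To see it is shortest, observe that $\pair{w_0\Lambda}{\alpha_j^\vee} < 0$ yields, via \cite[Proposition~5.10\,(1)]{LNSSS1} (when $j \in I_0$) or \cite[Proposition~5.11\,(3)]{LNSSS1} (when $j = 0$), an edge $x = w_0 \stackrel{}{\leftarrow} \mcr{s_j x}$ in the parabolic quantum Bruhat graph; prepending this edge to any directed path from $y$ to $\mcr{s_j x}$ yields a directed path from $y$ to $x$, whose length must therefore be at least $n = \len{x}{y}$, forcing $\len{\mcr{s_j x}}{y} \ge n-1$.

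I expect the main obstacle to be the uniform handling of the two cases $j \in I_0$ and $j = 0$: although structurally parallel, the invocations of Lemmas \ref{lem:dist1} and \ref{lem:dist2} each split further into the Bruhat/quantum sub-cases of \cite[Lemma~5.14]{LNSSS1}, and the sign computation $\pair{r_\theta w_k\Lambda}{\alpha_0^\vee} = -\pair{w_k\Lambda}{\alpha_0^\vee}$ used in the $j = 0$ contradiction for stage two requires a short calculation exploiting the level-zero condition and the identification $\alpha_0 = -\theta$ in $P_{\cl}$.
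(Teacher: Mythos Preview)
Your proposal is correct and follows essentially the same approach as the paper's own proof: both arguments rule out $w_k\beta_k = \pm\alpha_j$ (resp.\ $\pm\theta$) on the initial segment by the sign contradiction, invoke Lemma~\ref{lem:dist1}\,(2)/\ref{lem:dist2}\,(2) to reflect each of those edges, use Lemma~\ref{lem:dist1}\,(3)/\ref{lem:dist2}\,(3) at the critical step $p$ to identify $\mcr{s_j w_{p-1}}=w_p$, and establish the lower bound in part~(2) by producing a single edge $x \leftarrow \mcr{s_j x}$ to prepend. The only cosmetic differences are that the paper computes $s_j w_{p-1}\Lambda = w_p\Lambda$ directly (rather than passing through $w_p r_{\beta_p}=s_j w_p$ in $W_0$), and spells out the construction of the edge $x \leftarrow \mcr{s_j x}$ via Lemma~\ref{lem:mcr} and Lemma~\ref{lem:theta} rather than citing \cite[Propositions~5.10\,(1),~5.11\,(3)]{LNSSS1} directly.
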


\begin{proof}
(1) We give a proof only for the case $j \in I_{0}$. 
The proof for the case $j=0$ is similar; 
replace $\alpha_{j}$ and $\alpha_{j}^{\vee}$ by $-\theta$ and $-\theta^{\vee}$, 
respectively, and use Lemma~\ref{lem:dist2} instead of Lemma~\ref{lem:dist1}.
First, let us check that $w_{k}\beta_{k} \ne \pm \alpha_{j}$ 
for any $1 \le k \le p-1$. Suppose, contrary to our claim, that 
$w_{k}\beta_{k} = \pm \alpha_{j}$ for some $1 \le k \le p-1$. 
Then, 
\begin{equation*}
w_{k-1}\Lambda=w_{k}r_{\beta_{k}}\Lambda=
r_{w_{k}\beta_{k}}w_{k}\Lambda=s_{j}w_{k}\Lambda, 
\end{equation*}
and hence $\pair{w_{k-1}\Lambda}{\alpha_{j}^{\vee}}=
\pair{s_{j}w_{k}\Lambda}{\alpha_{j}^{\vee}}=
-\pair{w_{k}\Lambda}{\alpha_{j}^{\vee}} > 0$, which contradicts our assumption. 
Thus,  $w_{k}\beta_{k} \ne \pm \alpha_{j}$ for any $1 \le k \le p-1$.
It follows from Lemma~\ref{lem:dist1}\,(2) and our assumption that 
$\mcr{s_{j}w_{k-1}} \stackrel{\beta_{k}}{\leftarrow} \mcr{s_{j}w_{k}}$ 
for all $1 \le k \le p-1$. Also, since $\pair{w_{p-1}\Lambda}{\alpha_{j}^{\vee}} < 0$
and $\pair{w_{p}\Lambda}{\alpha_{j}^{\vee}} \ge 0$, it follows from 
Lemma~\ref{lem:dist1}\,(3) that $w_{p}\beta_{p}=\pm \alpha_{j}$, 
and hence 
\begin{equation*}
s_{j}w_{p-1}\Lambda=s_{j}w_{p}r_{\beta_{p}}\Lambda=
s_{j}r_{w_{p}\beta_{p}}w_{p}\Lambda=
s_{j}s_{j}w_{p}\Lambda=w_{p}\Lambda.
\end{equation*}
Thus, we obtain a directed path of the form \eqref{eq:sigma1-1} 
from $y$ to $\mcr{s_{j}x}$. This proves part (1). 

(2) Assume that $\len{x}{y}=n$. By the argument above, we have 
$\len{\mcr{s_{j}x}}{y} \le n-1$. 
Suppose, for a contradiction, that $\len{\mcr{s_{j}x}}{y} < n-1$, 
and take a directed path
\begin{equation*}
\mcr{s_{j}x}=z_{0} \stackrel{\gamma_{1}}{\leftarrow} z_{1}
\stackrel{\gamma_{2}}{\leftarrow} z_{2} 
\stackrel{\gamma_{3}}{\leftarrow} \cdots 
\stackrel{\gamma_{l}}{\leftarrow} z_{l}=y
\end{equation*}
from $y$ to $\mcr{s_{j}x}$ 
whose length $l$ is less than $n-1$. 
Let us show that $x \stackrel{\gamma}{\leftarrow} \mcr{s_{j}x}$ 
for some $\gamma \in \Delta_{0}^{+} \setminus \Delta_{J}^{+}$. 
Assume first that $j \in I_{0}$. 
Since $\pair{x\Lambda}{\alpha_{j}^{\vee}} < 0$ by the assumption, 
we have $x^{-1}\alpha_{j} \in \Delta_{0}^{-} \setminus \Delta_{J}^{-}$, 
and hence $\ell(x)=\ell(s_{j}x)+1$. Also, 
since $x \in W_{0}^{J}$, it follows from Lemma~\ref{lem:mcr} that
$s_{j}x \in W_{0}^{J}$. 
Therefore, if we set $\gamma:=
x^{-1}s_{j}\alpha_{j}=-x^{-1}\alpha_{j} 
\in \Delta_{0}^{+} \setminus \Delta_{J}^{+}$, 
then we obtain $x \stackrel{\gamma}{\leftarrow} s_{j}x=\mcr{s_{j}x}$. 
Assume next that $j=0$. 
Since $\pair{x\Lambda}{-\theta^{\vee}}=
\pair{x\Lambda}{\alpha_{0}^{\vee}} < 0$ 
by the assumption, we have 
$x^{-1}\theta \in \Delta_{0}^{+} \setminus \Delta_{J}^{+}$. 
Define an element $v \in W_{J}$ by 
$r_{\theta}x=\mcr{r_{\theta}x}v$. 
Then we see that $\gamma:=vx^{-1}\theta$ 
is contained in $\Delta_{0}^{+} \setminus \Delta_{J}^{+}$, 
and that 
\begin{equation*}
\mcr{\mcr{s_0x}r_{\gamma}}=
\mcr{\mcr{r_{\theta}x}r_{\gamma}}=
\mcr{r_{\theta}xv^{-1}r_{vx^{-1}\theta}}=
\mcr{r_{\theta}xv^{-1}vx^{-1}r_{\theta}xv^{-1}}=
\mcr{xv^{-1}}=x
\end{equation*}
since $x \in W_{0}^{J}$ and $v \in W_{J}$. 
Also, note that 
$\mcr{s_0x}^{-1}\theta=
\mcr{r_{\theta}x}^{-1}\theta=
 vx^{-1}r_{\theta}\theta=-\gamma \in 
\Delta_{0}^{-} \setminus \Delta_{J}^{-}$. 
Therefore, we deduce from Lemma~\ref{lem:theta} that
\begin{equation*}
x=\mcr{\mcr{s_0 x}r_{\gamma}}
\stackrel{\gamma}{\leftarrow} 
\mcr{r_{\theta}x}=\mcr{s_{0}x}.
\end{equation*}
Thus, we obtain a directed path
\begin{equation*}
x \stackrel{\gamma}{\leftarrow} 
\mcr{s_{j}x}=z_{0} \stackrel{\gamma_{1}}{\leftarrow} z_{1}
\stackrel{\gamma_{2}}{\leftarrow} z_{2} 
\stackrel{\gamma_{3}}{\leftarrow} \cdots 
\stackrel{\gamma_{l}}{\leftarrow} z_{l}=y
\end{equation*}
from $y$ to $x$ whose length is $l+1 < n=\len{x}{y}$. 
This contradicts the definition of $\len{x}{y}$. 
This proves part (2). 

(3) We should remark that 
$\pair{\Lambda}{z_{k}\beta_{k}^{\vee}}=\pair{\Lambda}{\beta_{k}^{\vee}}$ 
for each $1 \le k \le p-1$, since $z_{k} \in W_{J}$. Hence 
the assertion of part (3) follows immediately from 
the definition of a directed $\sigma$-path. 
This completes the proof of Lemma~\ref{lem:sigma1}. 
\end{proof}

The following lemma can be shown in the same way as 
Lemma~\ref{lem:sigma1}. If $j \in I_{0}$, then 
use Lemma~\ref{lem:dist1}\,(1) and (4) instead of 
Lemma~\ref{lem:dist1}\,(2) and (3), respectively; if $j=0$, then 
use Lemma~\ref{lem:dist2}\,(1) and (4) instead of 
Lemma~\ref{lem:dist2}\,(2) and (3), respectively. 
%
%
\begin{lem} \label{lem:sigma2}
Keep the notation and setting in Lemma~\ref{lem:sigma1}. 
\begin{enu}
\item If there exists $1 \le p \le n$ such that 
$\pair{w_{k}\Lambda}{\alpha_{j}^{\vee}} > 0$ for all $p \le k \le n$ and 
$\pair{w_{p-1}\Lambda}{\alpha_{j}^{\vee}} \le 0$, 
then $w_{p-1}=\mcr{s_{j}w_{p}}$, 
and there exists a directed path from $\mcr{s_{j}y}$ 
to $x$ of the form: 
%
%
\begin{equation} \label{eq:sigma2-1}
x=w_{0} \stackrel{\beta_{1}}{\leftarrow} \cdots 
\stackrel{\beta_{p-1}}{\leftarrow}
w_{p-1}=\mcr{s_{j}w_{p}}
\stackrel{z_{p+1}\beta_{p+1}}{\leftarrow} \cdots 
\stackrel{z_{n}\beta_{n}}{\leftarrow} 
\mcr{s_{j}w_{n}}=\mcr{s_{j}y}.
\end{equation}
Here, if $j \in I_{0}$, then we define $z_{k} \in W_{J}$ to 
be the identity element for all $p+1 \le k \le n$; 
if $j=0$, then we define $z_{k} \in W_{J}$ by 
$r_{\theta}w_{k}=\mcr{r_{\theta}w_{k}}z_{k}$ 
for each $p+1 \le k \le n$. 

\item If the directed path \eqref{eq:sigma1-0} from $y$ to $x$ is 
shortest, i.e., $\len{x}{y}=n$, then the directed path 
\eqref{eq:sigma2-1} from $\mcr{s_{j}y}$ to $x$ 
is also shortest, i.e., $\len{x}{\mcr{s_{j}y}}=n-1$. 

\item If the directed path \eqref{eq:sigma1-0} is a 
directed $\sigma$-path from $y$ to $x$ 
for some rational number $0 < \sigma < 1$, 
then the directed path \eqref{eq:sigma2-1} is a 
directed $\sigma$-path from $\mcr{s_{j}y}$ to $x$.
\end{enu}
\end{lem}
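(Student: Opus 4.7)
The plan is to mirror the proof of Lemma~\ref{lem:sigma1} step by step, substituting Lemma~\ref{lem:dist1}(1) and~(4) for~(2) and~(3) when $j \in I_{0}$, and Lemma~\ref{lem:dist2}(1) and~(4) for~(2) and~(3) when $j=0$. For part~(1), I would first verify by contradiction that $w_{k}\beta_{k} \ne \pm\alpha_{j}$ (resp.\ $\pm\theta$) for all $p+1 \le k \le n$: indeed, if $w_{k}\beta_{k}=\pm\alpha_{j}$ for some such $k$, then $w_{k-1}\Lambda=s_{j}w_{k}\Lambda$, whence $\pair{w_{k-1}\Lambda}{\alpha_{j}^{\vee}}<0$, contradicting the hypothesis on $w_{k-1}$. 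Applying Lemma~\ref{lem:dist1}(1) (resp.\ Lemma~\ref{lem:dist2}(1)) successively to each edge $w_{k-1}\stackrel{\beta_{k}}{\leftarrow}w_{k}$ for $p+1 \le k \le n$ then yields edges $\mcr{s_{j}w_{k-1}}\stackrel{z_{k}\beta_{k}}{\longleftarrow}\mcr{s_{j}w_{k}}$, with $z_{k}$ as prescribed in the statement. The boundary case is handled by Lemma~\ref{lem:dist1}(4) (resp.\ Lemma~\ref{lem:dist2}(4)): from $\pair{w_{p}\Lambda}{\alpha_{j}^{\vee}}>0$ and $\pair{w_{p-1}\Lambda}{\alpha_{j}^{\vee}}\le 0$, one deduces $w_{p}\beta_{p}=\pm\alpha_{j}$, so that $w_{p-1}\Lambda=s_{j}w_{p}\Lambda$ and hence $w_{p-1}=\mcr{s_{j}w_{p}}$.

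For part~(2), supposing that the path \eqref{eq:sigma1-0} is shortest, I would prove $\len{x}{\mcr{s_{j}y}}=n-1$ by contradiction: if there were a directed path from $\mcr{s_{j}y}$ to $x$ of length $l<n-1$, I would concatenate on its right an edge $\mcr{s_{j}y}\stackrel{\gamma}{\leftarrow}y$ to obtain a directed path from $y$ to $x$ of length $l+1<n$, contradicting $\len{x}{y}=n$. Such an edge exists for the following reason. For $j\in I_{0}$, the hypothesis $\pair{y\Lambda}{\alpha_{j}^{\vee}}>0$ gives $y^{-1}\alpha_{j}\in\Delta_{0}^{+}\setminus\Delta_{J}^{+}$; by Lemma~\ref{lem:mcr} we have $s_{j}y=\mcr{s_{j}y}\in W_{0}^{J}$, and \cite[Proposition~5.10(3)]{LNSSS1} furnishes the desired Bruhat edge with $\gamma=y^{-1}\alpha_{j}$. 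For $j=0$, the hypothesis $\pair{y\Lambda}{\alpha_{0}^{\vee}}>0$ gives $y^{-1}\theta\in\Delta_{0}^{-}\setminus\Delta_{J}^{-}$, and Lemma~\ref{lem:theta} directly provides a quantum edge $\mcr{r_{\theta}y}\stackrel{-y^{-1}\theta}{\longleftarrow}y$.

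Part~(3) is immediate: since each $z_{k}$ lies in $W_{J}$ and therefore fixes $\Lambda$, one has $\pair{\Lambda}{(z_{k}\beta_{k})^{\vee}}=\pair{\Lambda}{\beta_{k}^{\vee}}$, so that $\sigma$-divisibility of the labels of \eqref{eq:sigma1-0} transfers verbatim to the labels of \eqref{eq:sigma2-1}.

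The main obstacle I expect is purely bookkeeping. The arguments of Lemma~\ref{lem:sigma1} operate on the initial (leftmost) segment of the directed path, whereas here we must reflect the terminal (rightmost) segment instead; this swaps the roles of the ``left'' and ``right'' diagrams in each diamond of \cite[Lemma~5.14]{LNSSS1}. Care is also required in the $j=0$ case to track the $W_{J}$-factors $z_{k}$ defined via $r_{\theta}w_{k}=\mcr{r_{\theta}w_{k}}z_{k}$, so that they appear precisely as the conjugating factors in the labels $z_{k}\beta_{k}$.
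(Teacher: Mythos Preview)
Your proposal is correct and follows essentially the same approach as the paper, which explicitly states that Lemma~\ref{lem:sigma2} is proved by mirroring Lemma~\ref{lem:sigma1} with precisely the substitutions you identify (Lemma~\ref{lem:dist1}(1),(4) for (2),(3) when $j\in I_{0}$, and Lemma~\ref{lem:dist2}(1),(4) for (2),(3) when $j=0$). Your handling of parts~(1)--(3), including the concatenation argument in part~(2) via an edge $\mcr{s_{j}y}\stackrel{\gamma}{\leftarrow}y$, faithfully dualizes the paper's proof of Lemma~\ref{lem:sigma1}.
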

%
%
\begin{lem} \label{lem:stable1}
Let $\eta=(x_{1},\,x_{2},\,\dots,\,x_{s}\,;\,
 \sigma_{0},\,\sigma_{1},\,\dots,\,\sigma_{s})
 \in \ti{\BB}(\lambda)_{\cl}$. 
Let $j \in I$ and $1 \le u \le s-1$ be such that 
$\pair{x_{u+1}\Lambda}{\alpha_{j}^{\vee}} > 0$. Let 
\begin{equation*}
x_{u}=w_{0} \stackrel{\beta_{1}}{\leftarrow} w_{1}
\stackrel{\beta_{2}}{\leftarrow} w_{2} 
\stackrel{\beta_{3}}{\leftarrow} \cdots 
\stackrel{\beta_{n}}{\leftarrow} w_{n}=x_{u+1}
\end{equation*}
be a directed $\sigma_{u}$-path from $x_{u+1}$ to $x_{u}$. 
If there exists $0 \le k < n$ 
such that $\pair{w_{k}\Lambda}{\alpha_{j}^{\vee}} \le 0$, 
then $H^{\eta}_{j}(\sigma_{u}) \in \BZ$. In particular, 
if $\pair{x_{u}\Lambda}{\alpha_{j}^{\vee}} \le 0$, then 
$H^{\eta}_{j}(\sigma_{u}) \in \BZ$. 
\end{lem}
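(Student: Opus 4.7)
The plan is to reduce, via Abel summation and Lemma~\ref{lem:sigma0}, to showing a single integrality statement, namely $\sigma_u\pair{x_u\Lambda}{\alpha_j^{\vee}}\in\BZ$. Concretely, evaluating $\eta(\sigma_u)$ via \eqref{eq:QBG-path} yields $\eta(\sigma_u)=\sum_{l=1}^{u}(\sigma_l-\sigma_{l-1})x_l\Lambda$, and pairing with $\alpha_j^{\vee}$ plus Abel summation gives
\begin{equation*}
H^{\eta}_j(\sigma_u)=\sigma_u\pair{x_u\Lambda}{\alpha_j^{\vee}}+\sum_{l=1}^{u-1}\sigma_l\pair{x_l\Lambda-x_{l+1}\Lambda}{\alpha_j^{\vee}}.
\end{equation*}
Each summand on the right is in $\BZ$ by applying Lemma~\ref{lem:sigma0} to the directed $\sigma_l$-path from $x_{l+1}$ to $x_l$; note that $\pair{Q_0}{\alpha_j^{\vee}}\subset\BZ$ even for $j=0$, since $\pair{\alpha_i}{c}=0$ for every $i\in I$.

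To tame $\sigma_u\pair{x_u\Lambda}{\alpha_j^{\vee}}$, I would locate the ``last sign change'' along the given $\sigma_u$-path. Because $\pair{w_n\Lambda}{\alpha_j^{\vee}}=\pair{x_{u+1}\Lambda}{\alpha_j^{\vee}}>0$ and some $\pair{w_k\Lambda}{\alpha_j^{\vee}}\le 0$ by hypothesis, there is a least $p\in\{1,\dots,n\}$ with $\pair{w_\ell\Lambda}{\alpha_j^{\vee}}>0$ for all $p\le\ell\le n$; then $\pair{w_{p-1}\Lambda}{\alpha_j^{\vee}}\le 0$. Since $w_{p-1}=\mcr{w_pr_{\beta_p}}$ differs from $w_pr_{\beta_p}$ by an element of $W_J$ (which fixes $\Lambda$), the hypotheses of Lemma~\ref{lem:dist1}\,(4) are met when $j\in I_0$ and of Lemma~\ref{lem:dist2}\,(4) when $j=0$; these force $w_p\beta_p=\pm\alpha_j$ or $w_p\beta_p=\pm\theta$, respectively.

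This identification of $w_p\beta_p$ is the crux of the argument: it yields
\begin{equation*}
\pair{\Lambda}{\beta_p^{\vee}}=\pair{w_p\Lambda}{w_p\beta_p^{\vee}}=\pm\pair{w_p\Lambda}{\alpha_j^{\vee}}
\end{equation*}
(for $j=0$, Remark~\ref{rem:theta} converts $\theta^{\vee}$ to $\alpha_0^{\vee}$ on the level-zero weight $w_p\Lambda$, with the same conclusion up to sign). Multiplying by $\sigma_u$ and invoking $\sigma_u\pair{\Lambda}{\beta_p^{\vee}}\in\BZ$ from the $\sigma_u$-path condition, we get $\sigma_u\pair{w_p\Lambda}{\alpha_j^{\vee}}\in\BZ$. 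Bridging $w_p\Lambda$ and $x_u\Lambda=w_0\Lambda$ by telescoping the first $p$ edges,
\begin{equation*}
\sigma_u\bigl(x_u\Lambda-w_p\Lambda\bigr)=-\sum_{k=1}^{p}\sigma_u\pair{\Lambda}{\beta_k^{\vee}}\,w_k\beta_k\in Q_0,
\end{equation*}
so $\sigma_u\pair{x_u\Lambda-w_p\Lambda}{\alpha_j^{\vee}}\in\BZ$; adding this to the previous integer proves the lemma. The ``in particular'' clause is immediate: the assumption $\pair{x_u\Lambda}{\alpha_j^{\vee}}\le 0$ is precisely the hypothesis with $k=0$.

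The main obstacle, and the only place the proof has substance, is the middle step: \emph{a priori} nothing ties $\sigma_u\pair{w_p\Lambda}{\alpha_j^{\vee}}$ to the integers $\sigma_u\pair{\Lambda}{\beta_k^{\vee}}$ furnished by the $\sigma_u$-path condition. What makes it work is that at the first transition index, the reflected root $w_p\beta_p$ is forced by Lemmas~\ref{lem:dist1}\,(4) and \ref{lem:dist2}\,(4) to be $\pm\alpha_j$ (or $\pm\theta$), which is exactly what couples the two quantities.
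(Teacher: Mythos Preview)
Your proof is correct, but it follows a different route from the paper's.

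The paper argues by constructing two auxiliary quantum LS paths. It truncates $\eta$ to $\eta'=(x_1,\dots,x_u,x_{u+1};\sigma_0,\dots,\sigma_u,1)$ and writes $H^{\eta}_j(\sigma_u)=H^{\eta'}_j(1)-(1-\sigma_u)\pair{x_{u+1}\Lambda}{\alpha_j^{\vee}}$; since $\eta'(1)\in P_{\cl}$ by Lemma~\ref{lem:weight}, the problem reduces to showing $(1-\sigma_u)\pair{x_{u+1}\Lambda}{\alpha_j^{\vee}}\in\BZ$. For that it invokes Lemma~\ref{lem:sigma2} to produce a directed $\sigma_u$-path from $\mcr{s_jx_{u+1}}$ to $x_u$, hence a second element $\eta''=(x_1,\dots,x_u,\mcr{s_jx_{u+1}};\sigma_0,\dots,\sigma_u,1)\in\ti{\BB}(\lambda)_{\cl}$, and then reads off the integrality from $\eta'(1)-\eta''(1)\in Q_0$.

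You instead unwind everything to a single direct computation: Abel summation plus Lemma~\ref{lem:sigma0} reduce the claim to $\sigma_u\pair{x_u\Lambda}{\alpha_j^{\vee}}\in\BZ$, and then Lemmas~\ref{lem:dist1}\,(4)/\ref{lem:dist2}\,(4) at the last sign-change index $p$ identify $w_p\beta_p$ with $\pm\alpha_j$ (or $\pm\theta$), which couples the desired quantity to the integer $\sigma_u\pair{\Lambda}{\beta_p^{\vee}}$ supplied by the $\sigma_u$-path condition. This bypasses both Lemma~\ref{lem:weight} and Lemma~\ref{lem:sigma2}; the underlying combinatorics is the same (Lemma~\ref{lem:sigma2} is itself proved via the same Diamond-type Lemmas~\ref{lem:dist1}/\ref{lem:dist2}), but your argument is more self-contained and avoids the detour through auxiliary quantum LS paths. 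One cosmetic remark: the justification ``$\pair{Q_0}{\alpha_j^{\vee}}\subset\BZ$ since $\pair{\alpha_i}{c}=0$'' is unnecessary---$\pair{\alpha_i}{\alpha_j^{\vee}}=a_{ji}\in\BZ$ holds directly from the Cartan matrix---but the conclusion is of course correct.
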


\begin{proof}
We see from the definition 
that $\eta':=(x_{1},\,x_{2},\,\dots,\,x_{u},\,x_{u+1}\,;\,
\sigma_{0},\,\sigma_{1},\,\dots,\,\sigma_{u},\,\sigma_{s})$ 
is an element of $\ti{\BB}(\lambda)_{\cl}$. 
Also, observe that 
$\eta'(t)=\eta(t)$ for $0 \le t \le \sigma_{u+1}$, 
and hence $H^{\eta'}_{j}(t)=H^{\eta}_{j}(t)$ for 
$0 \le t \le \sigma_{u+1}$. It follows that
\begin{equation*}
H^{\eta}_{j}(\sigma_{u})=H^{\eta'}_{j}(\sigma_{u})=
 H^{\eta'}_{j}(1)-(1-\sigma_{u})\pair{x_{u+1}\Lambda}{\alpha_{j}^{\vee}}.
\end{equation*}
Since $\eta'(1) \in P_{\cl}$ (and hence $H^{\eta'}_{j}(1) \in \BZ$) 
by Lemma~\ref{lem:weight},
it suffices to show that 
$(1-\sigma_{u})\pair{x_{u+1}\Lambda}{\alpha_{j}^{\vee}} \in \BZ$. 

We deduce from Lemma~\ref{lem:sigma2} that 
there exists a directed $\sigma_{u}$-path 
from $\mcr{s_{j}x_{u+1}}$ to $x_{u}$. 
Therefore, 
$\eta''=
(x_{1},\,x_{2},\,\dots,\,x_{u},\,\mcr{s_{j}x_{u+1}}\,;\,
\sigma_{0},\,\sigma_{1},\,\dots,\,\sigma_{u},\,\sigma_{s})$
is also an element of $\ti{\BB}(\lambda)_{\cl}$. 
Since both $\eta'(1)$ and $\eta''(1)$ are 
contained in $\Lambda+Q_{0}$ by Lemma~\ref{lem:weight}, 
we have $\eta'(1)-\eta''(1) \in Q_{0}$. 
Also, we have
\begin{align*}
(Q_{0} \ni) \ 
\eta'(1)-\eta''(1) & = 
(1-\sigma_{u})x_{u+1}\Lambda-(1-\sigma_{u})s_{j}x_{u+1}\Lambda \\[3mm]
& = 
\begin{cases}
(1-\sigma_{u})\pair{x_{u+1}\Lambda}{\alpha_{j}^{\vee}}\alpha_{j} 
  & \text{if $j \in I_{0}$}, \\[1.5mm]
(1-\sigma_{u})\pair{x_{u+1}\Lambda}{\alpha_{j}^{\vee}}(-\theta)
  & \text{if $j=0$}. 
\end{cases}
\end{align*}
Here we remark that $\theta=\delta-\alpha_{0}=
\sum_{j \in I_{0}}a_{j}\alpha_{j}$, 
and the greatest common divisor of 
the $a_{j}$, $j \in I_{0}$, is equal to $1$. 
From these, we conclude that 
$(1-\sigma_{u})\pair{x_{u+1}\Lambda}{\alpha_{j}^{\vee}} \in \BZ$, 
thereby completing the proof of the proposition. 
\end{proof}

The following lemma can be shown in the same way as 
Lemma~\ref{lem:stable1}; noting that 
$\pi':=(x_{u},\,x_{u+1}\,\dots,\,x_{s};\,
\sigma_{0},\,\sigma_{u},\,\sigma_{u+1},\,\dots,\,\sigma_{s})$ 
is an element of $\ti{\BB}(\lambda)_{\cl}$, 
use $\pi'$ instead of $\eta'$ and the fact that 
$H^{\pi'}_{j}(1)-H^{\pi'}_{j}(1-t)=
 H^{\eta}_{j}(1)-H^{\eta}_{j}(1-t)$ for 
$0 \le t \le 1-\sigma_{u-1}$. 
%
%
\begin{lem} \label{lem:stable1a}
Let $\eta=(x_{1},\,x_{2},\,\dots,\,x_{s}\,;\,
 \sigma_{0},\,\sigma_{1},\,\dots,\,\sigma_{s})
 \in \ti{\BB}(\lambda)_{\cl}$. 
Let $j \in I$ and $1 \le u \le s-1$ be such that 
$\pair{x_{u}\Lambda}{\alpha_{j}^{\vee}} < 0$. Let 
\begin{equation*}
x_{u}=w_{0} \stackrel{\beta_{1}}{\leftarrow} w_{1}
\stackrel{\beta_{2}}{\leftarrow} w_{2} 
\stackrel{\beta_{3}}{\leftarrow} \cdots 
\stackrel{\beta_{n}}{\leftarrow} w_{n}=x_{u+1}
\end{equation*}
be a directed $\sigma_{u}$-path from $x_{u+1}$ to $x_{u}$. 
If there exists $0 < k \le n$ 
such that $\pair{w_{k}\Lambda}{\alpha_{j}^{\vee}} \ge 0$, 
then $H^{\eta}_{j}(\sigma_{u}) \in \BZ$. In particular, 
if $\pair{x_{u+1}\Lambda}{\alpha_{j}^{\vee}} \ge 0$, then 
$H^{\eta}_{j}(\sigma_{u}) \in \BZ$. 
\end{lem}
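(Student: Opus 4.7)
The plan is to mirror the proof of Lemma~\ref{lem:stable1}, trimming $\eta$ on the \emph{left} instead of on the right. First, I would define
\[
\pi' := (x_{u},\,x_{u+1},\,\dots,\,x_{s};\ 0,\,\sigma_{u},\,\sigma_{u+1},\,\dots,\,\sigma_{s});
\]
all the $\sigma_{k}$-path conditions for $\pi'$ are inherited directly from those for $\eta$, so $\pi' \in \ti{\BB}(\lambda)_{\cl}$. Because $\pi'$ and $\eta$ trace exactly the same vectors on $[\sigma_{u},\,1]$, we have $\eta(1)-\eta(\sigma_{u}) = \pi'(1)-\pi'(\sigma_{u})$; pairing with $\alpha_{j}^{\vee}$ and rearranging yields
\[
H^{\eta}_{j}(\sigma_{u}) = H^{\eta}_{j}(1) - H^{\pi'}_{j}(1) + H^{\pi'}_{j}(\sigma_{u}).
\]
By Lemma~\ref{lem:weight}, both $\eta(1)$ and $\pi'(1)$ lie in $\Lambda + Q_{0} \subset P_{\cl}$, so the first two terms on the right are integers; and since $\pi'$ is linear in direction $x_{u}\Lambda$ on $[0,\sigma_{u}]$, the third equals $\sigma_{u}\pair{x_{u}\Lambda}{\alpha_{j}^{\vee}}$. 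Thus the claim reduces to showing $\sigma_{u}\pair{x_{u}\Lambda}{\alpha_{j}^{\vee}} \in \BZ$.

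To establish this, I would construct a companion rational path by replacing $x_{u}$ with $\mcr{s_{j}x_{u}}$. Let $p$ be the smallest index in $\{1,\dots,n\}$ with $\pair{w_{p}\Lambda}{\alpha_{j}^{\vee}} \ge 0$; such $p$ exists by the second hypothesis of the lemma, and combining this minimality with $\pair{x_{u}\Lambda}{\alpha_{j}^{\vee}} < 0$ (i.e., $\pair{w_{0}\Lambda}{\alpha_{j}^{\vee}} < 0$) gives $\pair{w_{k}\Lambda}{\alpha_{j}^{\vee}} < 0$ for all $0 \le k \le p-1$. Applying Lemma~\ref{lem:sigma1}\,(1),(3) with $y = x_{u+1}$ and $x = x_{u}$ then produces a directed $\sigma_{u}$-path from $x_{u+1}$ to $\mcr{s_{j}x_{u}}$, so
\[
\pi'' := (\mcr{s_{j}x_{u}},\,x_{u+1},\,\dots,\,x_{s};\ 0,\,\sigma_{u},\,\sigma_{u+1},\,\dots,\,\sigma_{s})
\]
belongs to $\ti{\BB}(\lambda)_{\cl}$.

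Applying Lemma~\ref{lem:weight} again, both $\pi'(1)$ and $\pi''(1)$ lie in $\Lambda + Q_{0}$, so $\pi'(1) - \pi''(1) \in Q_{0}$. On the other hand, using the fact that $W_{J}$ stabilizes $\Lambda$ (and hence $\mcr{s_{j}x_{u}}\Lambda = s_{j}x_{u}\Lambda$), a direct computation exactly parallel to the one in the proof of Lemma~\ref{lem:stable1} gives
\[
\pi'(1) - \pi''(1) = \sigma_{u}(x_{u}\Lambda - s_{j}x_{u}\Lambda) =
\begin{cases}
\sigma_{u}\pair{x_{u}\Lambda}{\alpha_{j}^{\vee}}\alpha_{j} & \text{if } j \in I_{0}, \\[1mm]
\sigma_{u}\pair{x_{u}\Lambda}{\alpha_{0}^{\vee}}(-\theta) & \text{if } j = 0.
\end{cases}
\]
Since $\theta = \sum_{i \in I_{0}} a_{i}\alpha_{i}$ with $\gcd\{a_{i} \mid i \in I_{0}\} = 1$, membership in $Q_{0}$ forces $\sigma_{u}\pair{x_{u}\Lambda}{\alpha_{j}^{\vee}} \in \BZ$ in either case, completing the proof.

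The only real subtlety is ensuring that the chosen index $p$ meets the hypotheses of Lemma~\ref{lem:sigma1}\,(1); this is precisely the role played by the strict inequality $\pair{x_{u}\Lambda}{\alpha_{j}^{\vee}} < 0$ and the range $0 < k \le n$ (rather than $0 \le k \le n$) in the statement. Everything else is a routine left-right mirror of the argument used for Lemma~\ref{lem:stable1}.
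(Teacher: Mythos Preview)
Your proposal is correct and follows essentially the same approach as the paper: the paper's proof is merely the hint ``use $\pi':=(x_{u},\,x_{u+1},\,\dots,\,x_{s};\,\sigma_{0},\,\sigma_{u},\,\dots,\,\sigma_{s})$ instead of $\eta'$ and the fact that $H^{\pi'}_{j}(1)-H^{\pi'}_{j}(1-t)=H^{\eta}_{j}(1)-H^{\eta}_{j}(1-t)$,'' and you have filled in precisely these details, correctly invoking Lemma~\ref{lem:sigma1} (the mirror of Lemma~\ref{lem:sigma2}) to build $\pi''$ and conclude via Lemma~\ref{lem:weight}.
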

%
%
\begin{prop} \label{prop:ip}
Let $\lambda \in \sum_{i \in I_{0}} \BZ_{\ge 0} \vpi_{i}$ be as above. 
Both $\ti{\BB}(\lambda)_{\cl}$ and $\ha{\BB}(\lambda)_{\cl}$ are 
contained in $\BP_{\cl,\,\INT}$ under the identification \eqref{eq:QBG-path} of 
a rational path with a piecewise-linear, continuous map.
\end{prop}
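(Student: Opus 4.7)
My plan for proving Proposition~\ref{prop:ip} is as follows. Since $\ha{\BB}(\lambda)_{\cl} \subset \ti{\BB}(\lambda)_{\cl}$ directly from Definition~\ref{dfn:qLS}, it suffices to establish $\ti{\BB}(\lambda)_{\cl} \subset \BP_{\cl,\,\INT}$. Fix $\eta = (x_{1},\,\ldots,\,x_{s}\,;\,\sigma_{0},\,\ldots,\,\sigma_{s}) \in \ti{\BB}(\lambda)_{\cl}$ and $j \in I$. Setting $a_{k} := \pair{x_{k}\Lambda}{\alpha_{j}^{\vee}}$, the function $H^{\eta}_{j}(t) = \pair{\eta(t)}{\alpha_{j}^{\vee}}$ is piecewise linear with slope $a_{k}$ on the subinterval $[\sigma_{k-1},\sigma_{k}]$, so any local minimum of $H^{\eta}_{j}$ lies at $t \in \{0,1\}$, at an interior break point $\sigma_{u}$ with $a_{u} \le 0 \le a_{u+1}$, or inside a flat stretch of $H^{\eta}_{j}$.

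First I would dispose of the boundary cases: $H^{\eta}_{j}(0) = 0 \in \BZ$, and Lemma~\ref{lem:weight} gives $\eta(1) \in \Lambda + Q_{0} \subset P_{\cl}$, hence $H^{\eta}_{j}(1) = \pair{\eta(1)}{\alpha_{j}^{\vee}} \in \BZ$. For an interior local minimum at some $\sigma_{u}$ with $1 \le u \le s-1$, the non-degenerate case splits symmetrically. If $a_{u+1} > 0$, I would apply Lemma~\ref{lem:stable1} to the directed $\sigma_{u}$-path furnished by $\eta \in \ti{\BB}(\lambda)_{\cl}$, taking $k=0$ so that $\pair{w_{0}\Lambda}{\alpha_{j}^{\vee}} = a_{u} \le 0$ supplies the hypothesis, and conclude $H^{\eta}_{j}(\sigma_{u}) \in \BZ$. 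If instead $a_{u} < 0$, I would apply Lemma~\ref{lem:stable1a} with $k = n$, since $\pair{w_{n}\Lambda}{\alpha_{j}^{\vee}} = a_{u+1} \ge 0$.

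The one case not covered by a single application of the two lemmas above is the degenerate one $a_{u} = a_{u+1} = 0$, where $\sigma_{u}$ sits inside a maximal flat stretch $[\sigma_{p},\sigma_{q}]$ of $H^{\eta}_{j}$. For such a stretch to carry a local minimum, moving leftwards off it cannot decrease the value, so either $p = 0$ (in which case the constant value on the stretch is $H^{\eta}_{j}(0) = 0 \in \BZ$) or $a_{p} < 0$ by maximality; in the latter subcase $\sigma_{p}$ is itself a local minimum of the non-degenerate type, with $a_{p} < 0$ and $a_{p+1} = 0 \ge 0$ supplying the hypothesis of Lemma~\ref{lem:stable1a} at $k = n$, so that $H^{\eta}_{j}(\sigma_{u}) = H^{\eta}_{j}(\sigma_{p}) \in \BZ$.

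I expect the real work to already be concentrated in Lemmas~\ref{lem:stable1} and \ref{lem:stable1a}, which in turn rest on the rerouting Lemmas~\ref{lem:dist1}--\ref{lem:sigma2} for directed paths in the parabolic quantum Bruhat graph. Granted those, Proposition~\ref{prop:ip} itself is essentially a bookkeeping exercise over the possible slope patterns at candidate local minima; the only subtlety is to remember the degenerate flat-stretch case and reduce it by maximality to a non-degenerate local minimum at the left endpoint of the stretch.
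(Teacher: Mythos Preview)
Your proposal is correct and follows essentially the same route as the paper's own proof: reduce to $\ti{\BB}(\lambda)_{\cl}$, handle the endpoints via $H^{\eta}_{j}(0)=0$ and Lemma~\ref{lem:weight}, and at an interior break point $\sigma_{u}$ invoke Lemma~\ref{lem:stable1} (when $a_{u+1}>0$) or Lemma~\ref{lem:stable1a} (when $a_{u}<0$). The paper compresses your flat-stretch analysis into the single phrase ``we may assume that $t'=\sigma_{u}$'' together with the dichotomy on the signs of $a_{u},a_{u+1}$; your explicit reduction of a flat stretch to its left endpoint $\sigma_{p}$ just makes that assumption transparent.
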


\begin{proof}
Since $\ha{\BB}(\lambda)_{\cl} \subset \ti{\BB}(\lambda)_{\cl}$ 
by the definitions, it suffices to show that 
$\ti{\BB}(\lambda)_{\cl} \subset \BP_{\cl,\,\INT}$. 
Let $\eta=(x_{1},\,x_{2},\,\dots,\,x_{s}\,;\,
 \sigma_{0},\,\sigma_{1},\,\dots,\,\sigma_{s})
 \in \ti{\BB}(\lambda)_{\cl}$. 
We have shown that $\eta(1) \in P_{\cl}$ for every 
$\eta \in \ti{\BB}(\lambda)_{\cl}$ (see Lemma~\ref{lem:weight}). 
It remains to show that for every $j \in I$, 
all local minima of the function $H_{j}^{\eta}(t)$ 
are integers. Fix $j \in I$, and assume that 
the function $H_{j}^{\eta}(t)$ attains 
a local minimum at $t' \in [0,1]$;
we may assume that $t'=\sigma_{u}$ for some $0 \le u \le s$. 
If $u=0$ (resp., $u=s$), then 
$H_{j}^{\eta}(t')=H_{j}^{\eta}(0)=0 \in \BZ$ 
(resp., $H_{j}^{\eta}(t')=H_{j}^{\eta}(1) \in \BZ$) 
since $\eta(0)=0$ (resp., $\eta(1) \in P_{\cl}$). 
If $0 < u  < s$, then we have either 
$\pair{x_{u}\Lambda}{\alpha_{j}^{\vee}} \le 0$ and 
$\pair{x_{u+1}\Lambda}{\alpha_{j}^{\vee}} > 0$, or 
$\pair{x_{u}\Lambda}{\alpha_{j}^{\vee}} < 0$ and 
$\pair{x_{u+1}\Lambda}{\alpha_{j}^{\vee}} \ge 0$.
Therefore, it follows from Lemma~\ref{lem:stable1} 
or \ref{lem:stable1a} that 
$H^{\eta}_{j}(\sigma_{u}) \in \BZ$. 
Thus, we have proved the proposition. 
\end{proof}
%
%
\begin{lem} \label{lem:stable2}
Let $\eta=(x_{1},\,x_{2},\,\dots,\,x_{s}\,;\,
 \sigma_{0},\,\sigma_{1},\,\dots,\,\sigma_{s})
 \in \ti{\BB}(\lambda)_{\cl}$. 
Let $j \in I$ and $1 \le u \le s-1$ be such that 
$\pair{x_{u+1}\Lambda}{\alpha_{j}^{\vee}} > 0$ and 
$H^{\eta}_{j}(\sigma_{u}) \not\in \BZ$. Let 
%
%
\begin{equation} \label{eq:stable2-0}
x_{u}=w_{0} \stackrel{\beta_{1}}{\leftarrow} w_{1}
\stackrel{\beta_{2}}{\leftarrow} w_{2} 
\stackrel{\beta_{3}}{\leftarrow} \cdots 
\stackrel{\beta_{n}}{\leftarrow} w_{n}=x_{u+1}
\end{equation}
be a directed $\sigma_{u}$-path from $x_{u+1}$ to $x_{u}$.
Then, $\pair{w_{k}\Lambda}{\alpha_{j}^{\vee}} > 0$ for all $0 \le k \le n$, 
and there exists a directed $\sigma_{u}$-path from 
$\mcr{s_{j}x_{u+1}}$ to $\mcr{s_{j}x_{u}}$ of the form: 
%
%
\begin{equation} \label{eq:stable2-1}
\mcr{s_{j}x_{u}}=\mcr{s_{j}w_{0}} 
\stackrel{z_{1}\beta_{1}}{\leftarrow} \mcr{s_{j}w_{1}}
\stackrel{z_{2}\beta_{2}}{\leftarrow} \cdots 
\stackrel{z_{n}\beta_{n}}{\leftarrow} \mcr{s_{j}w_{n}}=
\mcr{s_{j}x_{u+1}}. 
\end{equation}
Here, if $j \in I_{0}$, then we define $z_{k} \in W_{J}$ to be 
the identity element for all $1 \le k \le n$; 
if $j=0$, then we define $z_{k} \in W_{J}$ by
$r_{\theta}w_{k}=\mcr{r_{\theta}w_{k}}z_{k}$ 
for each $1 \le k \le n$. 
Moreover, if \eqref{eq:stable2-0} is a shortest 
directed path from $x_{u+1}$ to $x_{u}$, i.e., 
$\len{x_{u}}{x_{u+1}}=n$, then 
\eqref{eq:stable2-1} is a shortest 
directed path from $\mcr{s_{j}x_{u+1}}$ to $\mcr{s_{j}x_{u}}$, i.e., 
$\len{\mcr{s_{j}x_{u}}}{\mcr{s_{j}x_{u+1}}}=n$. 
\end{lem}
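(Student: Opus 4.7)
The plan is to dispatch the three assertions in order. For the sign positivity $\pair{w_k\Lambda}{\alpha_j^\vee} > 0$ for all $0 \le k \le n$, the case $k = n$ is immediate from the hypothesis $\pair{x_{u+1}\Lambda}{\alpha_j^\vee} > 0$, while for $0 \le k < n$ the alternative $\pair{w_k\Lambda}{\alpha_j^\vee} \le 0$ would, via Lemma~\ref{lem:stable1}, force $H^\eta_j(\sigma_u) \in \BZ$, contradicting the standing hypothesis that $H^\eta_j(\sigma_u) \not\in \BZ$.

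For the new directed $\sigma_u$-path \eqref{eq:stable2-1}, I would proceed edge-by-edge. For each $1 \le k \le n$, first rule out $w_k\beta_k = \pm\alpha_j$ (respectively $\pm\theta$ when $j = 0$): otherwise $w_{k-1}\Lambda = s_j w_k\Lambda$, so $\pair{w_{k-1}\Lambda}{\alpha_j^\vee} = -\pair{w_k\Lambda}{\alpha_j^\vee} < 0$, contradicting the sign positivity just established. Then apply Lemma~\ref{lem:dist1}(1) (when $j \in I_0$) or Lemma~\ref{lem:dist2}(1) (when $j = 0$) to the edge $w_{k-1} = \mcr{w_k r_{\beta_k}} \stackrel{\beta_k}{\leftarrow} w_k$ to produce the edge $\mcr{s_j w_{k-1}} \stackrel{z_k\beta_k}{\leftarrow} \mcr{s_j w_k}$, with $z_k \in W_J$ as defined in the statement. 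Concatenation yields \eqref{eq:stable2-1}; the $\sigma_u$-path property holds because $z_k \in W_J$ fixes $\Lambda$, so $\sigma_u\pair{\Lambda}{(z_k\beta_k)^\vee} = \sigma_u\pair{\Lambda}{\beta_k^\vee} \in \BZ$.

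For the shortest-path claim, I would argue by contradiction: assume $\len{x_u}{x_{u+1}} = n$ and suppose there exists a directed path from $\mcr{s_j x_{u+1}}$ to $\mcr{s_j x_u}$ of length $l < n$. A short calculation (using $\pair{\Lambda}{c} = 0$ together with $r_\theta w_k = \mcr{r_\theta w_k}z_k$ with $z_k \in W_J$ when $j = 0$) shows that $\pair{\mcr{s_j w_k}\Lambda}{\alpha_j^\vee} < 0$ for every $k$, so in particular the endpoints of the hypothetical shorter path lie on the negative side. I would then invert the construction of the previous step via Lemma~\ref{lem:dist1}(2) or Lemma~\ref{lem:dist2}(2), applied edge-by-edge, to transform the shorter path into a directed path from $x_{u+1}$ to $x_u$ of length $l < n$, contradicting $\len{x_u}{x_{u+1}} = n$. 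The main obstacle is that an intermediate vertex $v$ of the shorter path could a priori satisfy $\pair{v\Lambda}{\alpha_j^\vee} \ge 0$, blocking a direct inversion; I would handle this by first invoking Lemma~\ref{lem:sigma1} or Lemma~\ref{lem:sigma2} to prune any such sign-transition vertices and further shorten the path, reducing to the case in which all intermediate vertices lie on the negative side, where the edge-by-edge inversion applies cleanly.
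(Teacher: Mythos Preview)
Your arguments for the first two claims---the positivity $\pair{w_k\Lambda}{\alpha_j^\vee}>0$ for all $k$, and the construction of the directed $\sigma_u$-path \eqref{eq:stable2-1}---are correct and essentially identical to the paper's proof.

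The shortest-path argument, however, has a gap. You correctly identify the obstacle: an intermediate vertex $v$ of the hypothetical shorter path from $\mcr{s_j x_{u+1}}$ to $\mcr{s_j x_u}$ may satisfy $\pair{v\Lambda}{\alpha_j^\vee}\ge 0$. But your proposed fix via Lemma~\ref{lem:sigma1} or Lemma~\ref{lem:sigma2} does not do what you claim. Those lemmas do not ``prune'' a sign-transition vertex while keeping both endpoints fixed; each one \emph{replaces} one endpoint by its $s_j$-image and shortens the path by one. For instance, applying Lemma~\ref{lem:sigma1} to the hypothetical shorter path (whose top $\mcr{s_j x_u}$ is negative) produces a path from $\mcr{s_j x_{u+1}}$ to $x_u$ of length $l-1$; the new top $x_u$ is now on the positive side, so neither lemma applies again, and you are stranded with a path whose endpoints are $\mcr{s_j x_{u+1}}$ and $x_u$---not a configuration to which your edge-by-edge inversion via Lemma~\ref{lem:dist1}(2)/\ref{lem:dist2}(2) applies, and not yet a path from $x_{u+1}$ to $x_u$.

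The paper's argument avoids inversion altogether. It first observes that since $\pair{x_{u+1}\Lambda}{\alpha_j^\vee}>0$, there is an edge $\mcr{s_j x_{u+1}}\stackrel{\gamma}{\leftarrow}x_{u+1}$ in the parabolic quantum Bruhat graph (via Lemma~\ref{lem:mcr} for $j\in I_0$, or Lemma~\ref{lem:theta} for $j=0$). Concatenating this edge with the hypothetical shorter path gives a path from $x_{u+1}$ to $\mcr{s_j x_u}$ of length $l+1$. Now the top $\mcr{s_j x_u}$ lies on the negative side and the bottom $x_{u+1}$ on the positive side, so a first sign change is guaranteed and Lemma~\ref{lem:sigma1}(1) applies once, yielding a path from $x_{u+1}$ to $\mcr{s_j\mcr{s_j x_u}}=x_u$ of length $(l+1)-1=l<n$, contradicting $\len{x_u}{x_{u+1}}=n$. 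One appended edge, one application of Lemma~\ref{lem:sigma1}, and no case analysis on intermediate signs.
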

\begin{proof}
It follows from Lemma~\ref{lem:stable1} that 
if $H^{\eta}_{j}(\sigma_{u}) \not\in \BZ$, 
then $\pair{w_{k}\Lambda}{\alpha_{j}^{\vee}} > 0$ for all $0 \le k \le n$ 
(in particular, $\pair{x_{u}\Lambda}{\alpha_{j}^{\vee}} > 0$). 
Assume that $j \in I_{0}$ (resp., $j=0$), and suppose, 
for a contradiction, that 
$w_{k}\beta_{k} = \pm \alpha_{j}$ (resp., $= \pm \theta$)
for some $1 \le k \le n$. 
Then, $w_{k-1}\Lambda=w_{k}r_{\beta_{k}}\Lambda=
r_{w_{k}\beta_{k}}w_{k}\Lambda=s_{j}w_{k}\Lambda$, 
and hence $\pair{w_{k-1}\Lambda}{\alpha_{j}^{\vee}}=
\pair{s_{j}w_{k}\Lambda}{\alpha_{j}^{\vee}}=-\pair{w_{k}\Lambda}{\alpha_{j}^{\vee}}$, 
which contradicts the fact that 
$\pair{w_{k-1}\Lambda}{\alpha_{j}^{\vee}} > 0$ and 
$\pair{w_{k}\Lambda}{\alpha_{j}^{\vee}} > 0$. 
Thus, we conclude that 
$w_{k}\beta_{k} \ne \pm \alpha_{j}$ (resp., $\ne \pm \theta$)
for any $1 \le k \le n$. 
Therefore, we deduce from Lemma~\ref{lem:dist1}\,(1) 
(resp., Lemma~\ref{lem:dist2}\,(1)) that 
there exists a directed path of the form \eqref{eq:stable2-1} 
from $\mcr{s_{j}x_{u+1}}$ to $\mcr{s_{j}x_{u}}$. 
Because the directed path \eqref{eq:stable2-0} is a directed $\sigma_{u}$-path, 
we have $\sigma_{u}\pair{\Lambda}{\beta_{k}^{\vee}} \in \BZ$. 
Also, it follows immediately that 
$\sigma_{u}\pair{\Lambda}{z\beta_{k}^{\vee}}=
 \sigma_{u}\pair{\Lambda}{\beta_{k}^{\vee}} \in \BZ$
since $z \in W_{J}$.
Thus, the directed path \eqref{eq:stable2-1} is 
a directed $\sigma_{u}$-path from 
$\mcr{s_{j}x_{u+1}}$ to $\mcr{s_{j}x_{u}}$. 

Now, we assume that $\len{x_{u}}{x_{u+1}}=n$, and 
suppose, for a contradiction, that there exists a directed path 
%
%
\begin{equation} \label{eq:dp11}
\mcr{s_{j}x_{u}}=z_{0} 
\stackrel{\gamma_{1}}{\leftarrow} z_{1}
\stackrel{\gamma_{2}}{\leftarrow} z_{2} 
\stackrel{\gamma_{3}}{\leftarrow} \cdots 
\stackrel{\gamma_{l}}{\leftarrow} z_{l}=\mcr{s_{j}x_{u+1}}
\end{equation}
from $\mcr{s_{j}x_{u+1}}$ to $\mcr{s_{j}x_{u}}$ 
whose length $l$ is less than $n$. 
Let us show that 
$\mcr{s_{j}x_{u+1}} \stackrel{\gamma}{\leftarrow} x_{u+1}$ 
for some $\gamma \in \Delta_{0}^{+} \setminus \Delta_{J}^{+}$. 
Assume first that $j \in I_{0}$. Since $\pair{x_{u+1}\Lambda}{\alpha_{j}^{\vee}} > 0$, 
we have $\gamma:=x_{u+1}^{-1}\alpha_{j} 
\in \Delta_{0}^{+} \setminus \Delta_{J}^{+}$, 
and hence $\ell(s_{j}x_{u+1})=\ell(x_{u+1})+1$. Also, by Lemma~\ref{lem:mcr}, 
$s_{j}x_{u+1} \in W_{0}^{J}$. Since $s_{j}x_{u+1}=x_{u+1}r_{\gamma}$, we obtain
$\mcr{s_{j}x_{u+1}}=s_{j}x_{u+1} \stackrel{\gamma}{\leftarrow} x_{u+1}$. 
Assume next that $j=0$. 
Since $\pair{x_{u+1}\Lambda}{\theta^{\vee}}=-\pair{x_{u+1}\Lambda}{\alpha_{0}^{\vee}} < 0$ 
by the assumption, it follows that 
$x_{u+1}^{-1}\theta \in \Delta_{0}^{-} \setminus \Delta_{J}^{-}$. 
Therefore, if we set $\gamma:=-x_{u+1}^{-1}\theta \in 
\Delta_{0}^{+} \setminus \Delta_{J}^{+}$, 
then $s_{0}x_{u+1}=r_{\theta}x_{u+1}=x_{u+1}r_{\gamma}$, and 
%
%
%
we obtain 
$\mcr{s_{0}x_{u+1}} \stackrel{\gamma}{\leftarrow} x_{u+1}$ 
by Lemma~\ref{lem:theta}. 
By concatenating the directed path \eqref{eq:dp11} and 
$\mcr{s_{j}x_{u+1}} \stackrel{\gamma}{\leftarrow} x_{u+1}$, 
we obtain a directed path 
from $x_{u+1}$ to $\mcr{s_{j}x_{u}}$ whose length is $l+1$. 
Since $\pair{x_{u+1}\Lambda}{\alpha_{j}^{\vee}} > 0$ and 
$\pair{s_{j}x_{u}\Lambda}{\alpha_{j}^{\vee}}=
 -\pair{x_{u}\Lambda}{\alpha_{j}^{\vee}} < 0$, 
we deduce from Lemma~\ref{lem:sigma1}\,(1) that 
there exists a directed path from $x_{u+1}$ to 
$\mcr{s_{j}\mcr{s_{j}x_{u}}}=x_{u}$ whose length is $(l+1)-1=l$. 
However, this contradicts the fact that 
$n=\len{x_{u}}{x_{u+1}}$ since $l < n$. 
This proves the lemma. 
\end{proof}
%
%
\subsection{Explicit description of the image of a quantum LS path under
the action of root operators.}
\label{subsec:ro}
In the course of the proof of the following proposition, 
we obtain an explicit description of 
the image of a quantum LS path as a rational path under
the action of root operators; 
this description is similar to the one given in \cite{Lit-I}. 
%
%
\begin{prop} \label{prop:stable}
Both of the sets $\ti{\BB}(\lambda) \cup \{\bzero\}$ and 
$\ha{\BB}(\lambda) \cup \{\bzero\}$ are stable 
under the action of the root operators $f_{j}$ for all $j \in I$. 
\end{prop}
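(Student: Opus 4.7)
The plan is to give an explicit, Littelmann-style description of $f_j\eta$ as a rational path and then read off the required directed $\sigma$-chain conditions from the lemmas already established in~\S\ref{subsec:prf-main1}. So fix $\eta=(x_1,\ldots,x_s;\sigma_0,\ldots,\sigma_s)\in\ti{\BB}(\lambda)_{\cl}$ (resp.\ $\ha{\BB}(\lambda)_{\cl}$) and $j\in I$ with $f_j\eta\ne\bzero$; by Proposition~\ref{prop:ip}, $\eta\in\BP_{\cl,\,\INT}^{(j)}$, so $f_j\eta$ is defined by \eqref{eq:ro_f} with $t_0,t_1$ given by~\eqref{eq:t2}.

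First I would locate $t_0$ and $t_1$ relative to the breakpoints. Since $H^\eta_j$ is piecewise linear with slope $\pair{x_k\Lambda}{\alpha_j^\vee}$ on $[\sigma_{k-1},\sigma_k]$, a short argument about one-sided slopes shows that $t_0$ must coincide with some breakpoint $\sigma_{u-1}$ (with $\pair{x_u\Lambda}{\alpha_j^\vee}>0$, and $\pair{x_{u-1}\Lambda}{\alpha_j^\vee}\le 0$ when $u>1$). For $t_1$, write $t_1\in(\sigma_{v-1},\sigma_v]$. On every interior breakpoint $\sigma_k$ of the middle segment $u\le k\le v-1$ the value $H^\eta_j(\sigma_k)$ lies strictly between the consecutive integers $m^\eta_j$ and $m^\eta_j+1$, hence is non-integral; applying Lemmas~\ref{lem:stable1} and~\ref{lem:stable1a} at each such $\sigma_k$ then forces $\pair{x_k\Lambda}{\alpha_j^\vee}>0$ for all $u\le k\le v$, so the middle slopes are all positive.

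Next I would write down, directly from \eqref{eq:ro_f} and the identifications $s_j(x_k\Lambda)=\mcr{s_jx_k}\Lambda$,
\begin{equation*}
f_j\eta=(x_1,\ldots,x_{u-1},\mcr{s_jx_u},\ldots,\mcr{s_jx_v},x_v,x_{v+1},\ldots,x_s;\sigma_0,\ldots,\sigma_{u-1},\sigma_u,\ldots,\sigma_{v-1},t_1,\sigma_v,\ldots,\sigma_s),
\end{equation*}
with the natural degenerations when $u=1$, $v=s$, or $t_1=\sigma_v$ (in the last case the duplicated $x_v$ and the extra breakpoint $t_1$ are omitted, leaving a transition from $x_{v+1}$ to $\mcr{s_jx_v}$). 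I then verify the three kinds of transitions appearing in $f_j\eta$: (a) the middle transitions $\mcr{s_jx_{k+1}}\to\mcr{s_jx_k}$ at $\sigma_k$ for $u\le k\le v-1$ are handled by Lemma~\ref{lem:stable2}, whose hypotheses were checked above, and whose length clause also preserves shortest chains for $\ha{\BB}(\lambda)_{\cl}$; (b) the left-boundary transition $\mcr{s_jx_u}\to x_{u-1}$ at $\sigma_{u-1}$ (if $u>1$) follows from Lemma~\ref{lem:sigma2} applied to the original $\sigma_{u-1}$-chain, the sign pattern $\pair{x_u\Lambda}{\alpha_j^\vee}>0$ and $\pair{x_{u-1}\Lambda}{\alpha_j^\vee}\le 0$ being exactly what that lemma requires, with part~(2) handling the $\ha{\BB}$ case; (c) the right-boundary transition is obtained by prepending the QBG edge $\mcr{s_jx_v}\leftarrow x_v$ (a Bruhat edge for $j\in I_0$ from \cite[Proposition~5.10(3)]{LNSSS1}, a quantum edge for $j=0$ from Lemma~\ref{lem:theta}) either to the trivial chain (when $t_1<\sigma_v$) or to the original $\sigma_v$-chain from $x_{v+1}$ to $x_v$ (when $t_1=\sigma_v$).

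The main obstacle lies in case~(c): one has to verify the integrality $t_1\pair{x_v\Lambda}{\alpha_j^\vee}\in\BZ$ needed for the prepended edge to yield a genuine directed $t_1$-path (and, for $\ha{\BB}(\lambda)_{\cl}$, to check that the resulting chain of length $\ell(x_v,x_{v+1})+1$ is shortest from $x_{v+1}$ to $\mcr{s_jx_v}$). The first point is forced by the identity
\begin{equation*}
t_1\pair{x_v\Lambda}{\alpha_j^\vee}-H^\eta_j(t_1)=\sigma_{v-1}\pair{x_v\Lambda}{\alpha_j^\vee}-\sum_{l=1}^{v-1}(\sigma_l-\sigma_{l-1})\pair{x_l\Lambda}{\alpha_j^\vee},
\end{equation*}
which is a $\BZ$-linear combination of the integrals $\sigma_l\pair{x_{l+1}\Lambda-x_l\Lambda}{\alpha_j^\vee}$ provided by Lemma~\ref{lem:sigma0}, combined with $H^\eta_j(t_1)=m^\eta_j+1\in\BZ$. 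The minimality for $\ha{\BB}(\lambda)_{\cl}$ will be extracted from the Diamond Lemmas \cite[Lemma~5.14]{LNSSS1} in the same spirit as the length clauses of Lemmas~\ref{lem:sigma1}, \ref{lem:sigma2}, and~\ref{lem:stable2}. With these verifications, each transition in $f_j\eta$ is a (shortest) directed $\sigma$-path, so $f_j\eta\in\ti{\BB}(\lambda)_{\cl}$ (resp.\ $\ha{\BB}(\lambda)_{\cl}$), completing the proof.
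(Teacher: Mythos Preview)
Your proposal is correct and follows essentially the same approach as the paper: the paper likewise writes $f_j\eta$ explicitly as a rational path and verifies the three kinds of transitions via Lemma~\ref{lem:sigma2}, Lemma~\ref{lem:stable2}, and the single QBG edge $\mcr{s_jx_v}\leftarrow x_v$ together with Lemma~\ref{lem:sigma0} for the $t_1$-integrality. The only differences are organizational---the paper splits into four cases according to whether $x_{u-1}=\mcr{s_jx_u}$ and whether $t_1=\sigma_v$---and that for the shortest-path check in your case~(c) with $t_1=\sigma_v$ the paper first observes $\pair{x_{v+1}\Lambda}{\alpha_j^\vee}\ge 0$ (using $\eta\in\BP_{\cl,\,\INT}$ and the definition of $t_0$) and then applies Lemma~\ref{lem:sigma1} rather than invoking the Diamond Lemmas directly.
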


\begin{proof}
Fix $j \in I$. 
Let $\eta=(x_{1},\,x_{2},\,\dots,\,x_{s}\,;\,
\sigma_{0},\,\sigma_{1},\,\dots,\,\sigma_{s}) \in 
\ti{\BB}(\lambda)_{\cl}$, and assume that 
$f_{j}\eta \ne \bzero$. 
It follows that the point 
$t_{0}=\max\bigl\{t \in [0,1] \mid 
       H^{\eta}_{j}(t)=m^{\eta}_{j}\bigr\}$
is equal to $\sigma_{u}$ for some $0 \le u < s$. 
Let $u \le m < s$ be such that 
$\sigma_{m} < t_{1} \le \sigma_{m+1}$; recall that 
$t_{1}=\min\bigl\{t \in [t_{0},1] \mid
       H^{\eta}_{j}(t)=m^{\eta}_{j}+1\bigr\}$. 
Note that 
the function $H^{\eta}_{j}(t)$ 
is strictly increasing on $[t_{0},\,t_{1}]$, 
which implies that $\pair{x_{p}\Lambda}{\alpha_{j}^{\vee}} > 0$ 
for all $u+1 \le p \le m+1$. 

\paragraph{Case 1.} Assume that 
$x_{u} \ne \mcr{s_{j}x_{u+1}}$ or $u=0$, 
and that $\sigma_{m} < t_{1} < \sigma_{m+1}$.
Then we deduce from the definition of the root operator $f_{j}$ 
(for the case $j=0$, see also Remark~\ref{rem:ro_f0}; 
 cf. \cite[Proposition~4.7\,a)]{Lit-A}) that 
\begin{align*}
& f_{j}\eta=
(x_{1},\,x_{2},\,\dots,\,x_{u},\,\mcr{s_{j}x_{u+1}},\,\dots,\,
 \mcr{s_{j}x_{m}},\,\mcr{s_{j}x_{m+1}},\,x_{m+1},\,x_{m+2},\,\dots,\,x_{s}\,;\, \\
& \hspace*{70mm}
\sigma_{0},\,\sigma_{1},\,\dots,\,\sigma_{u},\,\dots,\,
\sigma_{m},\,t_{1},\,\sigma_{m+1},\,\dots,\,\sigma_{s});
\end{align*}
note that $\mcr{s_{j}x_{p}} \ne \mcr{s_{j}x_{p+1}}$
for all $u+1 \le p \le m$, and that 
$\mcr{s_{j}x_{m+1}} \ne x_{m+1}$ since 
$\pair{x_{m+1}\Lambda}{\alpha_{j}^{\vee}} > 0$ as mentioned above. 
In order to prove that $f_{j}\eta \in \ti{\BB}(\lambda)_{\cl}$, 
we need to verify that 
\begin{enumerate}
\renewcommand{\labelenumi}{(\roman{enumi})}

\item there exists a directed $\sigma_{u}$-path 
from $\mcr{s_{j}x_{u+1}}$ to $x_{u}$ (when $u > 0$); 

\item there exists a directed $\sigma_{p}$-path 
from $\mcr{s_{j}x_{p+1}}$ to $\mcr{s_{j}x_{p}}$ 
for each $u+1 \le p \le m$; 

\item there exists a directed $t_{1}$-path 
from $x_{m+1}$ to $\mcr{s_{j}x_{m+1}}$. 

\end{enumerate}
Also, we will show that 
if $\eta \in \ha{\BB}(\lambda)_{\cl}$, then 
the directed paths in (i)--(iii) above can be chosen 
from the shortest ones, which implies that 
$f_{j}\eta \in \ha{\BB}(\lambda)_{\cl}$. 

(i) We deduce from the definition of $t_{0}=\sigma_{u}$ that 
$\pair{x_{u}\Lambda}{\alpha_{j}^{\vee}} \le 0$ and 
$\pair{x_{u+1}\Lambda}{\alpha_{j}^{\vee}} > 0$. 
Since $\eta \in \ti{\BB}(\lambda)_{\cl}$, 
there exists a directed $\sigma_{u}$-path 
from $x_{u+1}$ to $x_{u}$. Hence it follows from 
Lemma~\ref{lem:sigma2}\,(1),\,(3) that there exists 
a directed $\sigma_{u}$-path 
from $\mcr{s_{j}x_{u+1}}$ to $x_{u}$. 
Furthermore, we see from the definition of 
$\ha{\BB}(\lambda)_{\cl}$ and 
Lemma~\ref{lem:sigma2}\,(2) that 
if $\eta \in \ha{\BB}(\lambda)_{\cl}$, then 
there exists a directed $\sigma_{u}$-path 
from $\mcr{s_{j}x_{u+1}}$ to 
$x_{u}$ whose length is equal to 
$\len{x_{u}}{\mcr{s_{j}x_{u+1}}}$. 

(ii) Recall that $H^{\eta}_{j}(t)$ is strictly increasing 
on $[t_{0},t_{1}]$, and that 
$H^{\eta}_{j}(t_{0})=m^{\eta}_{j}$ and 
$H^{\eta}_{j}(t_{1})=m^{\eta}_{j}+1$. 
Hence it follows that 
$H^{\eta}_{j}(\sigma_{p}) \notin \BZ$ 
for all $u+1 \le p \le m$. 
Therefore, we deduce from Lemma~\ref{lem:stable2} that 
there exists a directed $\sigma_{p}$-path 
from $\mcr{s_{j}x_{p+1}}$ to $\mcr{s_{j}x_{p}}$ 
for each $u+1 \le p \le m$. 
Furthermore, we see from the definition of 
$\ha{\BB}(\lambda)_{\cl}$ and 
Lemma~\ref{lem:stable2} that 
if $\eta \in \ha{\BB}(\lambda)_{\cl}$, then 
for each $u+1 \le p \le m$, 
there exists a directed $\sigma_{p}$-path 
from $\mcr{s_{j}x_{p+1}}$ to $\mcr{s_{j}x_{p}}$ 
whose length is equal to 
$\len{\mcr{s_{j}x_{p}}}{\mcr{s_{j}x_{p+1}}}$. 

(iii) Since $\pair{x_{m+1}\Lambda}{\alpha_{j}^{\vee}} > 0$, 
by the same argument as 
in the second paragraph of the proof of Lemma~\ref{lem:stable2}, 
we obtain $\mcr{s_{j}x_{m+1}} \stackrel{\gamma}{\leftarrow} x_{m+1}$, 
with
\begin{equation*}
\gamma:=
 \begin{cases}
 x_{m+1}^{-1}\alpha_{j} & \text{if $j \in I_{0}$}, \\[1.5mm]
 x_{m+1}^{-1}(-\theta) & \text{if $j=0$}; 
 \end{cases}
\end{equation*}
note that the directed path 
$\mcr{s_{j}x_{m+1}} \stackrel{\gamma}{\leftarrow} x_{m+1}$ 
is obviously shortest since its length is equal to $1$. 
Let us show that $t_{1}\pair{\Lambda}{\gamma^{\vee}} \in \BZ$. 
It is easily checked that $\pair{\Lambda}{\gamma^{\vee}}=
\pair{x_{m+1}\Lambda}{\alpha_{j}^{\vee}}$. Also, we have
$\eta(t_{1})=t_{1}x_{m+1}\Lambda+
 \sum_{k=1}^{m}\sigma_{k}(x_{k}\Lambda-x_{k+1}\Lambda)$, and hence
\begin{equation*}
\BZ \ni m^{\eta}_{j}+1=
H^{\eta}_{j}(t_{1})=t_{1}\pair{x_{m+1}\Lambda}{\alpha_{j}^{\vee}}+
 \sum_{k=1}^{m}\pair{\sigma_{k}(x_{k}\Lambda-x_{k+1}\Lambda)}{\alpha_{j}^{\vee}}.
\end{equation*}
Since $\sigma_{k}(x_{k}\Lambda-x_{k+1}\Lambda) \in Q_{0}$ 
for each $1 \le k \le m$ by Lemma~\ref{lem:sigma0}, 
it follows from the equation above 
that $t_{1}\pair{x_{m+1}\Lambda}{\alpha_{j}^{\vee}} \in \BZ$, 
and hence $t_{1}\pair{\Lambda}{\gamma^{\vee}} \in \BZ$. 
Thus, we have verified that there exists a directed $t_{1}$-path 
from $x_{m+1}$ to $\mcr{s_{j}x_{m+1}}$ whose length is equal to 
$\len{\mcr{s_{j}x_{m+1}}}{x_{m+1}}=1$. 

Combining these, we conclude that 
$f_{j}\eta$ is an element of $\ti{\BB}(\lambda)_{\cl}$, and that 
if $\eta \in \ha{\BB}(\lambda)_{\cl}$, then 
$f_{j}\eta \in \ha{\BB}(\lambda)_{\cl}$. 

\paragraph{Case 2.} Assume that 
$x_{u} \ne \mcr{s_{j}x_{u+1}}$ or $u=0$, and that $t_{1}=\sigma_{m+1}$.
Then we deduce from the definition of the root operator $f_{j}$ 
(for the case $j=0$, see also Remark~\ref{rem:ro_f0}; 
 cf. \cite[Proposition~4.7\,a) and Remark~4.8]{Lit-A}) that 
\begin{align*}
& f_{j}\eta=
(x_{1},\,x_{2},\,\dots,\,x_{u},\,\mcr{s_{j}x_{u+1}},\,\dots,\,
 \mcr{s_{j}x_{m}},\,\mcr{s_{j}x_{m+1}},\,x_{m+2},\,\dots,\,x_{s}\,;\, \\
& \hspace*{70mm}
\sigma_{0},\,\sigma_{1},\,\dots,\,\sigma_{u},\,\dots,\,
\sigma_{m},\,t_{1}=\sigma_{m+1},\,\dots,\,\sigma_{s}).
\end{align*}
First, we observe that $\pair{x_{m+2}\Lambda}{\alpha_{j}^{\vee}} \ge 0$. 
Indeed, suppose, contrary to our claim, that 
$\pair{x_{m+2}\Lambda}{\alpha_{j}^{\vee}} < 0$. 
Since $H^{\eta}_{j}(\sigma_{m+1})=H^{\eta}_{j}(t_{1})=m^{\eta}_{j}+1$, 
it follows immediately that $H^{\eta}_{j}(\sigma_{m+1}+\epsilon) < 
m^{\eta}_{j}+1$ for sufficiently small $\epsilon > 0$, 
and hence the minimum $M$ of the function $H^{\eta}_{j}(t)$ on 
$[t_{1},\,1]$ is (strictly) less than $m^{\eta}_{j}+1$. 
Here we recall from Proposition~\ref{prop:ip} 
that all local minima of the function 
$H^{\eta}_{j}(t)$ are integers. 
Hence we deduce that $M=m^{\eta}_{j}$, 
which contradicts the definition of $t_{0}$. 
Thus, we obtain $\pair{x_{m+2}\Lambda}{\alpha_{j}^{\vee}} \ge 0$. 
Since $\pair{x_{m+1}\Lambda}{\alpha_{j}^{\vee}} > 0$, and hence
$\pair{s_{j}x_{m+1}\Lambda}{\alpha_{j}^{\vee}} < 0$, it follows that
$\mcr{s_{j}x_{m+1}} \ne x_{m+2}$. 

Now, in order to prove that 
$f_{j}\eta \in \ti{\BB}(\lambda)_{\cl}$, 
we need to verify that 
\begin{enumerate}
\renewcommand{\labelenumi}{(\roman{enumi})}

\item there exists a directed $\sigma_{u}$-path from 
$\mcr{s_{j}x_{u+1}}$ to $x_{u}$ (when $u > 0$); 

\item there exists a directed $\sigma_{p}$-path from 
$\mcr{s_{j}x_{p+1}}$ to $\mcr{s_{j}x_{p}}$ 
for each $u+1 \le p \le m$;

\setcounter{enumi}{3}

\item there exists a directed $\sigma_{m+1}$-path from 
$x_{m+2}$ to $\mcr{s_{j}x_{m+1}}$ 
(when $m+1 < s$). 

\end{enumerate}
We can verify (i) and (ii) by the same 
argument as for (i) and (ii) in Case 1, respectively. 
Hence it remains to show (iv). Also, in order to prove 
that $\eta \in \ha{\BB}(\lambda)_{\cl}$ implies 
$f_{j}\eta \in \ha{\BB}(\lambda)_{\cl}$, it suffices to check that 
the directed paths in (i), (ii), and (iv) above can be chosen from 
the shortest ones. 
We can show this claim for (i) and (ii) in the same way as 
for (i) and (ii) in Case 1, respectively. 
So, it remains to show it for (iv).

(iv) As in the proof of (iii) in Case 1, it can be shown that 
there exists a directed $t_{1}$-path
(and hence directed $\sigma_{m+1}$-path since $t_{1}=\sigma_{m+1}$ 
by the assumption) from $x_{m+1}$ to $\mcr{s_{j}x_{m+1}}$ 
whose length is equal to $1$. 
Also, it follows from the definition that 
there exists a directed $\sigma_{m+1}$-path from $x_{m+2}$ to $x_{m+1}$. 
Concatenating these directed $\sigma_{m+1}$-paths, we obtain 
a directed $\sigma_{m+1}$-path from $x_{m+2}$ to $\mcr{s_{j}x_{m+1}}$. 
Thus, we have proved that $f_{j}\eta \in \ti{\BB}(\lambda)_{\cl}$. 

Assume now that $\eta \in \ha{\BB}(\lambda)_{\cl}$, and set 
$n:=\len{x_{m+1}}{x_{m+2}}$. 
We see from the argument above that 
there exists a directed $\sigma_{m+1}$-path
from $x_{m+2}$ to $\mcr{s_{j}x_{m+1}}$ 
whose length is equal to $n+1$. 
Suppose, for a contradiction, that there exists a directed path from 
$x_{m+2}$ to $\mcr{s_{j}x_{m+1}}$ 
whose length $l$ is less than $n+1$.
Since $\pair{s_{j}x_{m+1}\Lambda}{\alpha_{j}^{\vee}} < 0$ and 
$\pair{x_{m+2}\Lambda}{\alpha_{j}^{\vee}} \ge 0$ as seen above, 
we deduce from Lemma~\ref{lem:sigma1} 
that there exists a directed path from $x_{m+2}$ to 
$\mcr{s_{j}\mcr{s_{j}x_{m+1}}}=\mcr{x_{m+1}}=x_{m+1}$ 
whose length is equal to $l-1 < n$, which contradicts 
$n=\len{x_{m+1}}{x_{m+2}}$.
Thus, we have proved that if $\eta \in \ha{\BB}(\lambda)_{\cl}$, 
then $f_{j}\eta \in \ha{\BB}(\lambda)_{\cl}$.

\paragraph{Case 3.} Assume that 
$x_{u} = \mcr{s_{j}x_{u+1}}$ and $\sigma_{m} < t_{1} < \sigma_{m+1}$.
Then we deduce from the definition of the root operator $f_{j}$ 
(for the case $j=0$, see also Remark~\ref{rem:ro_f0}; 
 cf. \cite[Proposition~4.7\,a) and Remark~4.8]{Lit-A}) that 
\begin{align*}
& f_{j}\eta=
(x_{1},\,x_{2},\,\dots,\,x_{u}=\mcr{s_{j}x_{u+1}},\,
 \mcr{s_{j}x_{u+2}},\,\dots,\, \\
& \hspace*{50mm}
 \mcr{s_{j}x_{m}},\,\mcr{s_{j}x_{m+1}},\,x_{m+1},\,x_{m+2},\,\dots,\,x_{s}\,;\, \\
& \hspace*{50mm}
\sigma_{0},\,\sigma_{1},\,\dots,\,\sigma_{u-1},\,\sigma_{u+1},\,\dots,\,
\sigma_{m},\,t_{1},\,\sigma_{m+1},\,\dots,\,\sigma_{s});
\end{align*}
note that $\mcr{s_{j}x_{m+1}} \ne x_{m+1}$ since 
$\pair{x_{m+1}\Lambda}{\alpha_{j}^{\vee}} > 0$. 
In order to prove that $f_{j}\eta \in \ti{\BB}(\lambda)_{\cl}$, 
we need to verify that 

\begin{enumerate}
\renewcommand{\labelenumi}{(\roman{enumi})}

\setcounter{enumi}{1}

\item there exists a directed $\sigma_{p}$-path from 
$\mcr{s_{j}x_{p+1}}$ to $\mcr{s_{j}x_{p}}$ 
for each $u+1 \le p \le m$; 

\item there exists a directed $t_{1}$-path 
from $x_{m+1}$ to $\mcr{s_{j}x_{m+1}}$. 

\end{enumerate}
We can verify (ii) and (iii) 
by the same argument as for (ii) and (iii) in Case 1, respectively. 
Also, in the same way as in the proofs of (ii) and (iii) in Case 1, 
respectively, we can check that if $\eta \in \ha{\BB}(\lambda)_{\cl}$, then 
the directed paths in (ii) and (iii) above can be chosen from the shortest ones. 
Thus we have proved that $f_{j}\eta \in \ti{\BB}(\lambda)_{\cl}$, and 
that $\eta \in \ha{\BB}(\lambda)_{\cl}$ implies 
$f_{j}\eta \in \ha{\BB}(\lambda)_{\cl}$.

\paragraph{Case 4.} Assume that 
$x_{u} = \mcr{s_{j}x_{u+1}}$ and $t_{1}=\sigma_{m+1}$.
Then we deduce from the definition of the root operator $f_{j}$ 
(for the case $j=0$, see also Remark~\ref{rem:ro_f0}; 
 cf. \cite[Proposition~4.7\,a) and Remark~4.8]{Lit-A}) that
\begin{align*}
& f_{j}\eta=
(x_{1},\,x_{2},\,\dots,\,x_{u}=\mcr{s_{j}x_{u+1}},\,
 \mcr{s_{j}x_{u+2}},\,\dots,\, \\
& \hspace*{50mm} 
 \mcr{s_{j}x_{m}},\,\mcr{s_{j}x_{m+1}},\,x_{m+2},\,\dots,\,x_{s}\,;\, \\
& \hspace*{50mm}
\sigma_{0},\,\sigma_{1},\,\dots,\,\sigma_{u-1},\,\sigma_{u+1},\,\dots,\,
\sigma_{m},\,t_{1}=\sigma_{m+1},\,\dots,\,\sigma_{s});
\end{align*}
note that $\mcr{s_{j}x_{m+1}} \ne x_{m+2}$ since 
$\pair{s_{j}x_{m+1}\Lambda}{\alpha_{j}^{\vee}} < 0$ and 
$\pair{x_{m+2}\Lambda}{\alpha_{j}^{\vee}} \ge 0$ (see Case 2 above). 
In order to prove that $f_{j}\eta \in \ti{\BB}(\lambda)_{\cl}$, 
we need to verify that 
\begin{enumerate}
\renewcommand{\labelenumi}{(\roman{enumi})}
\setcounter{enumi}{1}

\item there exists a directed $\sigma_{p}$-path from 
$\mcr{s_{j}x_{p+1}}$ to $\mcr{s_{j}x_{p}}$ 
for each $u+1 \le p \le m$; 

\setcounter{enumi}{3}

\item there exists a directed $\sigma_{m+1}$-path from 
$x_{m+2}$ to $\mcr{s_{j}x_{m+1}}$ 
(when $m+1 < s$). 

\end{enumerate}
We can verify (ii) and (iv) 
by the same argument as for (ii) in Case 1 and (iv) in Case 2, respectively. 
Also, as in the proofs of (ii) in Case 1 and (iv) in Case 2, 
we can check that if $\eta \in \ha{\BB}(\lambda)_{\cl}$, then 
the directed paths in (ii) and (iv) above can be chosen from the shortest ones. 
Thus we have proved that $f_{j}\eta \in \ti{\BB}(\lambda)_{\cl}$, and 
that $\eta \in \ha{\BB}(\lambda)_{\cl}$ implies 
$f_{j}\eta \in \ha{\BB}(\lambda)_{\cl}$.

This completes the proof of Proposition~\ref{prop:stable}. 
\end{proof}

Combining Theorem~\ref{thm:charls} with 
Propositions~\ref{prop:ip} and \ref{prop:stable}, 
we obtain Theorem~\ref{thm:main}.


{\small
\setlength{\baselineskip}{13pt}
\renewcommand{\refname}{References}

}

\end{document}